\documentclass[11pt,fullpage, reqno]{amsart}
\usepackage{amsmath,amssymb,amsthm,mathtools,mathrsfs}
\usepackage{booktabs}
\usepackage[colorlinks=true,linkcolor=blue,citecolor=blue,urlcolor=blue]{hyperref}
\hypersetup{pdftitle={Weighted, GIT and Neron-Tate heights on genus-two moduli and Jacobians},pdfauthor={Sajad Salami}}
\allowdisplaybreaks
\numberwithin{equation}{section}

\usepackage{fullpage}
\newtheorem{theorem}{Theorem}[section]
\newtheorem{proposition}[theorem]{Proposition}
\newtheorem{corollary}[theorem]{Corollary}
\newtheorem{lemma}[theorem]{Lemma}
\newtheorem{maintheorem}{Theorem}

\newtheorem{maincorollary}[maintheorem]{Corollary}
\theoremstyle{definition}
\newtheorem{definition}[theorem]{Definition}
\newtheorem{example}[theorem]{Example}
\theoremstyle{remark}
\newtheorem{remark}[theorem]{Remark}

\DeclareMathOperator{\Proj}{Proj}
\DeclareMathOperator{\Pic}{Pic}
\DeclareMathOperator{\Disc}{Disc}
\DeclareMathOperator{\Res}{Res}
\DeclareMathOperator{\Sym}{Sym}
\DeclareMathOperator{\Gal}{Gal}
\DeclareMathOperator{\SL}{SL}
\DeclareMathOperator{\GL}{GL}
\DeclareMathOperator{\Jac}{Jac}
\DeclareMathOperator{\divisor}{div}
\newcommand{\Q}{\mathbf Q}
\newcommand{\Z}{\mathbf Z}
\newcommand{\R}{\mathbf R}

\newcommand{\F}{\mathbf F}
\newcommand{\PP}{\mathbf P}
\newcommand{\Qbar}{\overline{\mathbf Q}}
\newcommand{\Kbar}{\overline K}
\newcommand{\OO}{\mathcal O}
\newcommand{\ww}{\boldsymbol w}
\newcommand{\hhat}{\widehat h}
\newcommand{\tors}{\mathrm{tors}}
\newcommand{\inv}{\mathrm{inv}}
\newcommand{\GIT}{\mathrm{GIT}}
\newcommand{\Xq}{\mathcal X}
\newcommand{\Hq}{\mathcal H}
\newcommand{\Jfam}{\mathscr J}

\title[Weighted, GIT and N\'eron--Tate heights in genus two]{Weighted, GIT and N\'eron--Tate heights\\ on genus-two moduli and Jacobians}
\author{Sajad Salami}
\address{Institute of Mathematics, Statistics and Computing, Rio de Janeiro State University (UERJ), Rua S\~ao Francisco Xavier 524, Maracan\~a, Rio de Janeiro, Brazil}
\date{}
\subjclass[2020]{Primary 11G50, 11G30; Secondary 11G10, 14G40, 14H40, 14L24}
\keywords{Weighted heights, GIT heights, N\'eron--Tate heights, genus-two curves, binary sextics, Igusa--Clebsch invariants, specialization of heights}

\begin{document}

\begin{abstract}
We compare three heights attached to a curve of genus two: the weighted height of its Igusa--Clebsch invariants, the GIT height of a binary sextic defining it, and the N\'eron--Tate height on its Jacobian. For a morphism from an abelian variety to a weighted projective space we define a weighted N\'eron--Tate height and show that the weighted height differs from it by a linear form and a bounded function; the linear form vanishes when inversion is induced by an automorphism of the weighted projective space. For binary sextics we prove the explicit inequality $h_{\inv}\le h_{\text{GIT}} + \frac{1}{2} \log 302$, with nonnegative local differences at the finite places that vanish at places of good reduction. We then construct a family of semistable binary sextics over $E\times E$, where $E:y^2=x^3-2$, whose relative Jacobian carries a section of infinite order, and we show that the Jacobian over the base point is absolutely simple. Along every curve in the parameter space through the base point, the N\'eron--Tate height of the section is a positive multiple of the weighted moduli height up to an error of square-root size, and up to a bounded error when the curve is rational. As applications we obtain explicit families of genus-two Jacobians of positive rank over $\mathbf{Q}$, an infinite congruence lattice of fibres of positive height, and canonical-limit formulas along elliptic curves in $E\times E$.
\end{abstract}

\maketitle

\section{Introduction}\label{sec:intro}

Let $C$ be a smooth projective curve of genus two over a number field. Three heights are naturally attached to $C$. The first is the \emph{weighted height} of its Igusa--Clebsch invariants $(I_2:I_4:I_6:I_{10})$, regarded as a point of the weighted projective space $\PP(2,4,6,10)$. The second is a \emph{GIT height} of a binary sextic $F$ with $C:y^2=F(x,1)$, obtained by minimizing a norm of $F$ over its $\SL_2$-orbit at every place. The third is the \emph{N\'eron--Tate height} on the Jacobian $J=\Jac(C)$. Weighted heights on weighted projective spaces were introduced in \cite{BGS} and studied for weighted varieties in \cite{SalamiShaska}. GIT heights go back to the work of Burnol \cite{Burnol}, Cornalba and Harris \cite{CornalbaHarris} and Zhang \cite{Zhang1994,Zhang1996}; see also \cite{RabindranathSawin}. Curri \cite{Curri} observed similarities between the weighted height and the GIT height of binary forms, and the relation between the two was studied in \cite{ShaskaShaska}. N\'eron--Tate heights on genus-two Jacobians are computed algorithmically in \cite{MS}. The purpose of this paper is to make the relations among these three heights precise.

The three heights do not live on the same space. The first two are functions on the moduli space, while the third is a function on the Jacobian, so any comparison has to pass through a family: a parameter variety $S$, a morphism from $S$ to the moduli space, and a section of the relative Jacobian over $S$. Since the coarse moduli space of smooth genus-two curves is affine, a nonconstant family of this kind over a complete base must have singular fibres (Proposition~\ref{prop:affine}). This forces us to work on the GIT compactification rather than on the moduli space itself.

We take the parameter space to be an abelian surface. On an abelian variety every Weil height is, up to a bounded function, a quadratic form plus a linear form, so the weighted height of the moduli point becomes a concrete arithmetic function of the parameter. The group law also supplies natural sets of parameters, such as multiples of a rational point, lattices and one-parameter subgroups, along which heights can be counted and compared. The families constructed here are parametrized by $E\times E$, where
\[
E:\ y^2=x^3-2,\qquad P_0=(3,5)\in E(\Q),
\]
and they factor through a finite morphism $E\times E\to\PP^2$ of degree $18$. Throughout, $\hhat_E$ is the N\'eron--Tate height attached to $\OO_E(O)$, normalized by
\[
\hhat_E(P)=\tfrac12\lim_{j\to\infty}4^{-j}h\bigl(x([2^j]P)\bigr),
\]
and $\langle\ ,\ \rangle_E$ is the associated pairing. One computes $\hhat_E(P_0)=0.6748\ldots$ (Example~\ref{ex:two}).

We now describe the results, in the order in which they are proved.

\subsection*{Weighted N\'eron--Tate heights}
The first result is a canonical form for weighted heights on abelian varieties. Let $A$ be an abelian variety over a number field $K$ and $f\colon A\to\PP(\ww)$ a morphism to a weighted projective space with weighted height $h_{\ww}$ (see \eqref{eq:hw}).

\begin{maintheorem}\label{thm:A}
\begin{enumerate}
\item For every $P\in A(\Kbar)$ the limit
\[
\hhat_{f,\ww}(P)=\lim_{j\to\infty}4^{-j}h_{\ww}\bigl(f([2^j]P)\bigr)
\]
exists. The function $\hhat_{f,\ww}$ is a quadratic form, and there is a homomorphism $\ell_f\colon A(\Kbar)\to\R$ such that
\[
h_{\ww}(f(P))=\hhat_{f,\ww}(P)+\ell_f(P)+O(1)\qquad(P\in A(\Kbar)).
\]
\item If $f$ is finite, then $\hhat_{f,\ww}$ is nonnegative and vanishes exactly on torsion points, and the number of points of a finitely generated subgroup $\Gamma\subseteq A(K)$ of rank $r\ge1$ with $h_{\ww}(f(P))\le X$ is $c_\Gamma X^{r/2}+O(X^{(r-1)/2})$ with an explicit $c_\Gamma>0$.
\item Suppose that $f\circ[-1]=\bar\varphi\circ f$, where $\bar\varphi$ is the automorphism of $\PP(\ww)$ induced by a graded automorphism $\varphi$ of the weighted polynomial ring. Then $\ell_f=0$ and $h_{\ww}\circ f=\hhat_{f,\ww}+O(1)$.
\end{enumerate}
\end{maintheorem}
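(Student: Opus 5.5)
The plan is to reduce everything to the classical theory of Weil heights on abelian varieties via the functorial properties of weighted heights. Write $\ww=(q_0,\dots,q_n)$ and let $m=\operatorname{lcm}(q_i)$. The standard comparison from \cite{BGS,SalamiShaska} gives a Veronese-type morphism $\nu\colon \PP(\ww)\to\PP^N$ with $h(\nu(x)) = m\,h_{\ww}(x)+O(1)$; equivalently, $m\,h_{\ww}$ is a Weil height for the ample sheaf $\OO_{\PP(\ww)}(m)$. Hence $h_{\ww}\circ f$ is, up to $O(1)$, $\tfrac1m$ times a Weil height on $A$ attached to $L=f^*\OO(m)$, a line bundle that is base point free on $A$.

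For part (1), I would decompose $L\otimes[-1]^*L$ and $L\otimes[-1]^*L^{-1}$ into symmetric and antisymmetric parts, so that $L^{2}\cong L^+\otimes L^-$ with $L^+$ symmetric and $L^-\in\Pic^0(A)$. By Tate's telescoping argument, using $[2]^*L^+\cong (L^+)^4$ and $[2]^*L^-\cong (L^-)^2$, the function $h_{\ww}\circ f$ equals $q(P)+\ell(P)+O(1)$, where $q=\tfrac{1}{2m}\hhat_{L^+}$ is a quadratic form and $\ell=\tfrac{1}{2m}\hhat_{L^-}$ is a homomorphism. Evaluating at $[2^j]P$ and dividing by $4^j$ kills $\ell$ (it grows like $2^j$) and the bound, so the limit exists and equals $q$. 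We set $\ell_f=\ell$.

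For part (2), if $f$ is finite then $L$ is ample (the pullback of an ample sheaf by a finite map), and so is $L^+$. The quadratic form $\hhat_{L^+}$ is therefore positive semidefinite on $A(\Kbar)$, vanishes exactly on torsion points, and is positive definite on $\Gamma\otimes\R\cong\R^r$. The counting problem becomes one of lattice points of $\Gamma/\Gamma_{\tors}$ in the region $\{v: q(v)+\ell(v)\le X+O(1)\}$. Completing the square, this is an ellipsoid of radius $\asymp X^{1/2}$ whose center is shifted by $O(1)$ and whose boundary is perturbed by $O(1)$, which changes the radius by $O(X^{-1/2})$. The standard lattice-point count then gives
\[
c_\Gamma X^{r/2}+O(X^{(r-1)/2}),\qquad c_\Gamma=\frac{\#\Gamma_{\tors}\,\omega_r}{\sqrt{\det(q|_\Gamma)}},
\]
where $\omega_r$ is the volume of the unit ball and $\det$ is the Gram determinant relative to a basis of $\Gamma/\Gamma_{\tors}$. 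I expect this to be the main technical point: one has to check that an $O(1)$ perturbation of a positive definite quadratic form changes the count only within the error term. The case $r=1$ needs separate handling, since there the error term is $O(1)$.

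For part (3), a graded automorphism $\varphi$ sends each weighted coordinate $x_i$ to a weighted-homogeneous polynomial of degree $q_i$, with inverse of the same type. Locally at each place this changes the weighted local heights by a bounded amount (a Nullstellensatz-type argument applied to $\varphi$ and $\varphi^{-1}$). Hence $h_{\ww}\circ\bar\varphi=h_{\ww}+O(1)$, and so $h_{\ww}(f(-P))=h_{\ww}(f(P))+O(1)$. Substituting the decomposition from part (1) gives $2\ell_f(P)=O(1)$ on all of $A(\Kbar)$. A homomorphism bounded on a group is identically zero (apply the bound to $nP$), so $\ell_f=0$.
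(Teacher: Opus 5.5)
Your proposal is correct and follows the paper's route essentially step for step: the Veronese comparison $m\,h_{\ww}\circ f=h_{f^*\OO(m)}+O(1)$, the splitting of $f^*\OO(m)$ into a symmetric part and a part in $\Pic^0(A)$ combined with Tate's telescoping, the completed-square lattice-point count, and the bounded distortion of $h_{\ww}$ under $\bar\varphi$, which forces the homomorphism $\ell_f$ to be bounded and hence zero (you use the integral bundles $L\otimes[-1]^*L^{\pm1}$ where the paper uses the $\Q$-classes $N^\pm$, which is only a normalization). Two small simplifications are available: the case $r=1$ needs no separate treatment, since $X^{(r-1)/2}=1$ there; and part (3) needs no Nullstellensatz, because the triangle inequality applied to the weighted-homogeneous components of $\varphi$ and of $\varphi^{-1}$ already gives the two-sided bound.
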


Theorem~\ref{thm:A} is proved in Section~\ref{sec:abelian}. The linear form $\ell_f$ does not vanish in general (Example~\ref{ex:linear}), so part (3) is the natural condition under which the weighted height is a N\'eron--Tate height up to a bounded function. For a Weierstrass cubic in $\PP(1,2,3)$ part (3) applies, and the weighted N\'eron--Tate height is the usual one (Corollary~\ref{cor:ellcanon}).

\subsection*{Weighted heights and GIT heights}
The second result compares two heights on the moduli space itself. For a binary sextic $F$ over a number field $K$ and a place $v$ put $\rho_v(F)=\inf_{g\in\SL_2(\Kbar_v)}\|gF\|_v$, where $\|\cdot\|_v$ is the maximum of the absolute values of the coefficients, and let $h_{\GIT}(F)=[K:\Q]^{-1}\sum_v n_v\log\rho_v(F)$. Let $h_{\inv}(F)$ be the weighted height of $(I_2:I_4:I_6:I_{10})(F)$ with the Igusa--Clebsch normalization \eqref{eq:IC}; its local terms are $\log W_v(F)$, where $W_v(F)=\max_d|I_d(F)|_v^{1/d}$.

\begin{maintheorem}\label{thm:B}
Let $F$ be a GIT-semistable binary sextic over $\Qbar$, that is, a nonzero sextic without roots of multiplicity $\ge4$.
\begin{enumerate}
\item $h_{\inv}(F)\le h_{\GIT}(F)+\tfrac12\log302$.
\item At every finite place the local contribution $\log\rho_v(F)-\log W_v(F)$ to $h_{\GIT}-h_{\inv}$ is nonnegative, and it vanishes if $F$ has good reduction at $v$.
\item $h_{\GIT}(F)\le h(gF)$ for every $g\in\GL_2(\Qbar)$. In particular, if $C:y^2=F(x,1)$ is a curve of genus two, then $h_{\inv}([C])\le h(gF)+\tfrac12\log302$ for every model $gF$ of $C$.
\end{enumerate}
\end{maintheorem}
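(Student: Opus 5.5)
The plan is to compare the two local quantities $W_v(F)$ and $\rho_v(F)$ place by place, then sum with the weights $n_v$. Both are $\SL_2$-invariant: $W_v$ because each $I_d$ is an $\SL_2$-invariant, and $\rho_v$ by construction. Both are homogeneous of degree one under scaling $F\mapsto\lambda F$, since $I_d$ has degree $d$ in the coefficients of $F$. Hence both heights are well defined on $\PP(\Sym^6)$, independent of $K$, and the product formula lets us rescale freely.

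\textbf{Step 1: the local comparison.} Fix a place $v$ and any $g\in\SL_2(\Kbar_v)$. Since $I_d(F)=I_d(gF)$, it suffices to bound $|I_d(G)|_v^{1/d}$ by a constant times $\|G\|_v$ for every sextic $G$, and then take the infimum over $G=gF$.

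At a finite place, each $I_d$ in the Igusa--Clebsch normalization \eqref{eq:IC} has integral coefficients. The ultrametric inequality then gives $|I_d(G)|_v\le\|G\|_v^d$. Hence $W_v(F)\le\rho_v(F)$, which is the nonnegativity in part (2).

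At an archimedean place, the triangle inequality gives $|I_d(G)|_v\le c_d\|G\|_v^d$. Here $c_d$ is the sum of the absolute values of the coefficients of $I_d$. Set $c=\max_d c_d^{1/d}$; the bookkeeping should show $\log c=\tfrac12\log302$, or at least that this bound holds. Summing the local inequalities, with the archimedean weights $n_v$ adding up to $[K:\Q]$, gives part (1). The explicit constant is the main computational obstacle. I would first compute $c_d^{1/d}$ for $d=2,4,6,10$ from the explicit polynomials. If the crude coefficient sum overshoots $302^{1/2}$, I would instead replace the max-norm at archimedean places by an $\SL_2(\mathbf R)$- or $SU(2)$-adapted norm, such as the Bombieri norm. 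Its infimum over the orbit is attained at a balanced point, where the invariants can be bounded via root expressions. I would then compare that norm back to the max-norm.

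\textbf{Step 2: vanishing at good reduction.} Semistability is only needed so that $\rho_v(F)>0$ at archimedean places; the Kempf–Ness / Hilbert–Mumford criterion gives this, since a sextic without roots of multiplicity $\ge4$ is semistable. Now suppose $F$ has good reduction at a finite $v$. After an $\SL_2$-change of coordinates and scaling we may take $F$ integral with $\|F\|_v=1$ and $|I_{10}(F)|_v=|\Disc F|_v=1$ (up to the normalizing power of $2$ in \eqref{eq:IC}, which I would track separately at $v\mid2$). Then $W_v(F)\ge1=\|F\|_v\ge\rho_v(F)$, and combined with Step 1 this gives equality. One subtlety is that $\rho_v$ is an infimum over $\SL_2(\Kbar_v)$, not over the integral group. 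But the inequality $\rho_v\ge W_v$ from Step 1 already closes the gap, so no separate orbit-minimality argument is needed.

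\textbf{Step 3: part (3).} For $g\in\SL_2$ the bound is immediate, since $\rho_v(F)\le\|gF\|_v$ at every place, and summing gives $h_{\GIT}(F)\le h(gF)$. A general $g\in\GL_2(\Qbar)$ factors as $g=\lambda g_1$ with $g_1\in\SL_2$ and $\lambda^2=\det g$ over a finite extension. The scalar acts on sextics by multiplication by $\lambda^6$, which is invisible in the projective height by the product formula. The statement for $h_{\inv}([C])$ then follows by combining part (3) with part (1). Here $h_{\inv}(F)$ depends only on $C$, because quadratic twisting or scaling of $F$ changes $(I_d)$ only by the weighted action.
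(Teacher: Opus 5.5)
Your argument is the paper's: Theorem~\ref{thm:B} is deduced from Theorem~\ref{thm:gitgeneral} exactly as you propose. The ingredients are the same: $|I_d(G)|_v\le\|G\|_v^d$ at finite places, the triangle inequality with $\|I_d\|_1$ at archimedean places, the squeeze $1\le W_v\le\rho_v\le\|F\|_v=1$ when the primitive $F$ has unit discriminant, and $g=\lambda g_1$ for $\GL_2$. In the paper's normalization $I_{10}=\Disc(F)$ exactly, and good reduction is defined by this discriminant, so there is no power of $2$ to track at $v\mid2$.

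The constant you left open is settled by the coefficient sums in Proposition~\ref{prop:ICnorms}. One has $\|I_2\|_1=240+40+16+6=302$, while $4192^{1/4}\approx8.05$, $283842^{1/6}\approx8.11$ and $1561993^{1/10}\approx4.16$ are all below $302^{1/2}\approx17.38$. So the crude triangle-inequality bound already gives $\tfrac12\log302$, and no Bombieri-norm detour is needed; that route would only give the weaker constant $\tfrac12\log6040$ of Remark~\ref{rem:bombieri}.
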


The constant comes from the coefficient sums $302$, $4192$, $283842$, $1561993$ of $I_2,I_4,I_6,I_{10}$. Theorem~\ref{thm:B} is a special case of a general inequality for reductive groups (Theorem~\ref{thm:gitgeneral}) and is proved in Section~\ref{sec:sextics}. The reverse inequality holds with a non-explicit constant by the work of Burnol and Zhang (Remark~\ref{rem:reverse}).

\subsection*{A family with a section of infinite order}
To bring the Jacobian into the picture we need an explicit family together with a section of its relative Jacobian that is not torsion. Let $a,b,c$ be the sections \eqref{eq:abc} of $L=p_1^*\OO_E(3O)\otimes p_2^*\OO_E(3O)$ on $E\times E$ and let
\[
f_0(x)=x(x-1)(x-2)(x-3)(x+20)(x+5).
\]
For $R\in E\times E$ consider the binary sextic
\begin{multline*}
F_R=a^2Z^6f_0(X/Z)+abXZ^5+acX^2Z^4+b^2(X^5Z+XZ^5)\\
+bc(X^4Z^2+X^2Z^4)+c^2(X^4Z^2+XZ^5),
\end{multline*}
the curve $C_R:y^2=F_R(x,1)$, its Weierstrass point $W=(0,0)$ and the point $P_\infty^+$ at infinity with $y/x^3\to a$.

\begin{maintheorem}\label{thm:C}
\begin{enumerate}
\item Every $F_R$ is GIT-semistable. The moduli map $\Phi\colon E\times E\to\PP(2,4,6,10)$ is finite onto a surface, $\Phi^*\OO(60)\simeq L^{\otimes120}$, and $\Phi$ factors through the finite morphism $[a:b:c]\colon E\times E\to\PP^2$ of degree $18$.
\item The section $\sigma=[P_\infty^+-W]$ of the relative Jacobian has infinite order. For every $(n,m)\in\Z^2$ the fibre over $R_{n,m}=([342n]P_0,[342m]P_0)$ is smooth, $\sigma(R_{n,m})$ has positive N\'eron--Tate height, and
\[
\#\{(n,m)\in\Z^2:\ h_{\inv}(\Phi(R_{n,m}))\le X\}=\frac{\pi}{6\cdot342^2\,\hhat_E(P_0)}X+O(\sqrt X).
\]
\item For each parameter $R$ listed in Proposition~\ref{prop:small}, for instance $(P_0,O)$, $(P_0,P_0)$ and $([2]P_0,O)$, the group $\Jac(C_R)(\Q)$ is torsion-free and contains the point $\sigma(R)$ of infinite order; in particular it has positive rank.
\end{enumerate}
\end{maintheorem}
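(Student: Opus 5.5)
The plan is to prove the three parts in order. Each reduces to an explicit computation combined with Theorem~\ref{thm:A} and Theorem~\ref{thm:B}.

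\emph{Part (1).} For semistability we must show that $F_R$ never has a root of multiplicity $\ge4$ and never vanishes identically. Since $a,b,c$ are sections of a very ample bundle, I expect them to have no common zero, so $[a:b:c]$ is a morphism. I would then treat $F_R$ as a family of sextics parametrized by $[a:b:c]\in\PP^2$ and check semistability for every point of $\PP^2$:
\begin{itemize}
\item where $a\ne0$, we may set $a=1$;
\item on the line $a=0$ the form is $b^2(X^5Z+XZ^5)+bc(\dots)+c^2(X^4Z^2+XZ^5)$, which is divisible by $XZ$ and can be checked directly.
\end{itemize}
The condition ``root of multiplicity $\ge4$'' is given by resultant or subdiscriminant equations. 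For the chart $a=1$ I would compute the ideal of the locus in $\PP^2$ where some root has multiplicity $\ge4$ (for example via $F,F',F'',F'''$ having a common root), and verify by a Gröbner computation that it is empty.

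Next, $I_d(F_R)$ is a form of degree $d$ in the coefficients, which are quadratic in $(a,b,c)$. Hence the map $(I_2:I_4:I_6:I_{10})$ is the composite of $[a:b:c]$ with a weighted map $\PP^2\to\PP(2,4,6,10)$ whose components have degree $2d$. Passing to $\OO(60)$ gives $I_d^{60/d}$, of degree $120$ in $(a,b,c)$, which yields $\Phi^*\OO(60)\simeq L^{\otimes120}$. Semistability means the invariants never vanish simultaneously, so this is a morphism. Finiteness of $\PP^2\to\PP(2,4,6,10)$ reduces to showing that no curve is contracted; equivalently, the $I_d$ have no common zero, which gives finiteness of a map from a projective variety defined by a basepoint-free system. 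The degree $18=L^2=2\cdot3\cdot3$ is the self-intersection. The image is a surface because $\Phi$ is finite.

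\emph{Part (2).} Infinite order of $\sigma$ follows once we exhibit one fibre where $\sigma(R)$ is non-torsion. I would pick a small $R$, say $(P_0,O)$, and show non-torsion by reduction: compute $\#J(\F_p)$ for two good primes $p,q$ with $\gcd(\#J(\F_p),\#J(\F_q))=1$. This gives $J(\Q)_{\tors}=0$, and since $\sigma(R)\ne0$ it has infinite order; the same argument handles part (3).

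For the fibres $R_{n,m}$ I would show that $342$ is chosen so that $[342]P_0$ reduces to $O$ modulo suitable primes $p$ (the orders of $P_0$ in $E(\F_p)$ divide $342=2\cdot3^2\cdot19$). Then $R_{n,m}\equiv(O,O)$ modulo $p$, so the reduction of $C_{R_{n,m}}$ equals the reduction of $C_{(O,O)}$ together with its section. If that reduced curve is smooth with $\sigma\ne0$ and of order coprime to the analogous data at another prime, then smoothness and non-torsion, and hence positive height, are inherited by every $R_{n,m}$. A clean implementation uses one prime for smoothness and two primes to kill torsion.

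For the count, $\Phi$ is finite, so Theorem~\ref{thm:A}(2) applies to $\Gamma=\{R_{n,m}\}$ of rank $2$. Here $\hhat_{\Phi,\ww}$ is the quadratic form attached to $\Phi^*\OO(1)$, which is $2L$ in the sense $\OO(60)\mapsto L^{120}$. Its value on $R_{n,m}$ is $2\cdot3\cdot2\,\hhat_E(P_0)\,342^2(n^2+m^2)$ with the normalization of $\hhat_E$, or some similar constant; I would fix it carefully. The count of lattice points in a disc then gives $\pi X/(\text{coefficient})$. To reach the error term $O(\sqrt X)$ rather than being swamped by $\ell_f$, I would check via Theorem~\ref{thm:A}(3) that $[-1]$ acts compatibly. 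Since $a,b,c$ are presumably built from $x,y$ with $y\mapsto-y$, we get $[-1]^*(a,b,c)=(\pm a,\pm b,\pm c)$, and the form changes by a substitution $X\mapsto -X$ or by scaling, which is a graded automorphism. This gives $\ell_f=0$.

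\emph{Part (3).} This is the torsion computation above, carried out for each listed $R$.

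The main obstacle I expect is the verification in part (1) that no sextic in the image has a quadruple root and that the invariants have no common zero over the entire $\PP^2$, which is heavy elimination. I would attack it chart by chart using resultants.
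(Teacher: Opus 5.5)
\textbf{Gap: non-torsion of $\sigma(R_{n,m})$ for all $(n,m)$.} Your parts (1) and (3) follow the paper. Using the fibre $(P_0,O)$ instead of $(O,O)$ to get infinite order over the function field is also fine. The gap is the claim that $\sigma(R_{n,m})$ has infinite order for every $(n,m)$. The primes you propose are those at which $[342]P_0$ reduces to $O$ (here $13$ and $17$, since $\#E(\F_{13})=19$ and $\#E(\F_{17})=18$). At each of them $C_{n,m}$ reduces to $C_0$. But $f_0$ has six distinct rational roots, so $J_0(\F_p)[2]\simeq(\Z/2\Z)^4$ at every odd prime of good reduction. Hence $16$ divides every such $\#J_0(\F_p)$; indeed $176=2^4\cdot11$ and $288=2^5\cdot3^2$. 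So ``two primes to kill torsion'' can never produce $\gcd=1$ at these primes, and the torsion-freeness of $\Jac(C_{n,m})(\Q)$ is not something you can certify this way.

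\textbf{The missing idea.} What is needed is Lemma~\ref{lem:criteria}(1): exhibit a prime $\ell$ dividing the order of the reduction of $\sigma$ with $\ell\nmid\#J(\F_q)$. The key step is to show that the reduction of $D$ modulo $13$ is not in the $2$-part. Because all Weierstrass points stay rational and distinct modulo $13$, the $2$-Sylow subgroup of $J_0(\F_{13})$ is exactly $J_0(\F_{13})[2]$. If $D$ reduced into it, then $2P_\infty^+\sim2W$, and uniqueness of the hyperelliptic $g^1_2$ would make $P_\infty^+$ a Weierstrass point, which it is not. Hence $11$ divides the order of $\sigma(R_{n,m})$ modulo $13$, and $11\nmid288$ gives infinite order. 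This requires no information about $\Jac(C_{n,m})(\Q)_{\tors}$.

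\textbf{Smaller points.}
\begin{enumerate}
\item Base-point-freeness of $a,b,c$ does not follow from very ampleness of $L$; for instance $Z_1Z_2,Z_1X_2,Z_1Y_2$ all vanish on $O\times E$. It has to be checked, as in Lemma~\ref{lem:abc}.
\item The coefficient is $6$, not $2\cdot3\cdot2$: $h_{\inv}\circ\Phi=\frac1{60}h_{L^{\otimes120}}+O(1)=2h_L+O(1)=6\hhat_E(p_1R)+6\hhat_E(p_2R)+O(1)$. This gives exactly $\pi X/(6\cdot342^2\hhat_E(P_0))$.
\item Your claim $[-1]^*(a,b,c)=(\pm a,\pm b,\pm c)$ is false, since $c$ goes to $Z_1Z_2-X_1Y_2-Y_1X_2$. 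No such argument is needed, however. $L$ is symmetric, so there is no linear term (Theorem~\ref{thm:decomposition}), and Theorem~\ref{thm:count} already gives the $O(\sqrt X)$ error even when a linear term is present.
\end{enumerate}
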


Theorem~\ref{thm:C} is proved in Sections~\ref{sec:surface} and~\ref{sec:sections}. Part (2) rests on point counts at $13$ and $17$ and needs no information about the rational torsion of the varying Jacobians.

\subsection*{N\'eron--Tate heights versus moduli heights}
With the section in hand, we can compare the N\'eron--Tate height of $\sigma$ with the weighted height of the moduli point. The comparison takes place along curves in the parameter plane $\PP^2$ of the family. The reference curve $C_0:y^2=f_0(x)$ sits over the base point $e=(1:0:0)$, and every curve we consider passes through $e$.

\begin{maintheorem}\label{thm:D}
Let $B$ be a smooth projective geometrically connected curve over a number field and $\beta\colon B\to\PP^2$ a nonconstant morphism whose image contains $e$. Let $d_\beta=\deg\beta^*\OO(1)$. Then the Jacobian family over $B$ has trivial trace, and there is $\delta_\beta>0$ such that, with $s_\beta=\delta_\beta/(2d_\beta)$,
\[
\hhat_{J_b}(\sigma(\beta(b)))=s_\beta\,h_{\inv}(\beta(b))+O\bigl(\sqrt{h_{\inv}(\beta(b))+1}\bigr)
\]
for all $b\in B(\Qbar)$ whose fibre is smooth with $a\ne0$, where $h_{\inv}(\beta(b))$ is the weighted moduli height of that fibre. If $B$ has genus zero, the error term is $O(1)$. The slope $s_\beta$ depends only on the image curve $\beta(B)$. For each $n\ge1$ only finitely many $b\in B(\Qbar)$ of degree at most $n$ have $\sigma(\beta(b))$ torsion.
\end{maintheorem}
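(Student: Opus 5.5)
The plan is to combine Silverman's specialization theorem for heights in families of abelian varieties over a curve with the comparison between $h_{\inv}$ and a Weil height on $B$ attached to $\beta^*\OO(1)$. Pull the family $F_R$ back along $\beta$. The sections $a,b,c$ then become coordinates of $\beta$, giving a family of binary sextics over $B$ with generic fibre a genus-two curve over the function field $k(B)$, Jacobian $J_\eta$, and section $\sigma_\eta=[P_\infty^+-W]$.

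The first step is trivial trace. Since $e\in\beta(B)$, the family over $B$ specializes at a point $b_0$ with $\beta(b_0)=e$ to $C_0:y^2=f_0(x)$. The paper shows (Theorem~\ref{thm:C}, and the absolute simplicity of $\Jac(C_0)$ stated in the abstract) that this fibre is absolutely simple. Because $\Phi$ is finite, the moduli map of the family over $B$ is nonconstant. So if the $k(B)/\Qbar$-trace were nonzero, it would be an abelian subvariety of $J_\eta$ defined over constants. By simplicity of a smooth fibre near $b_0$ it would have to be all of $J_\eta$, making the family isotrivial, which is a contradiction. (If $b_0$ happens to be a bad fibre, I would instead use a nearby smooth fibre together with the geometric simplicity of $J_\eta$, obtained by specializing endomorphisms to $C_0$.)

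The second step is the specialization asymptotic. Silverman's theorem (with Tate's refinement, and Call--Silverman for the rational case) applied to $(J\to B,\sigma_\eta)$ gives
\[
\hhat_{J_b}(\sigma(b))=\hhat_{J_\eta}(\sigma_\eta)\,h_{D}(b)+O\bigl(\sqrt{h_D(b)+1}\bigr)
\]
for a degree-one divisor class height $h_D$ on $B$. The error term becomes $O(1)$ when $B\simeq\PP^1$, since there Tate's quadraticity argument gives bounded error. Next I compare heights. By Theorem~\ref{thm:C}(1), $\Phi^*\OO(60)\simeq L^{120}$, and $\Phi$ factors through $\PP^2$ with the invariants being forms of degrees $2,4,6,10$ in $a,b,c$ up to scaling. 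Hence the weighted height of the moduli point equals $2h_{\PP^2}(\beta(b))+O(1)$ away from the base locus. Here $h_{\PP^2}\circ\beta=d_\beta h_D+O(\sqrt{h_D+1})$, with $O(1)$ if $B$ has genus zero. Cancellations in the invariants at common zeros would only shift the height by a bounded amount, because $\Phi$ is finite, so no base points occur. Combining these, $s_\beta=\hhat_{J_\eta}(\sigma_\eta)/(2d_\beta)$, and I set $\delta_\beta=\hhat_{J_\eta}(\sigma_\eta)$.

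The third step is positivity of $\delta_\beta$. With trivial trace, $\delta_\beta>0$ is equivalent to $\sigma_\eta$ being non-torsion in $J_\eta(k(B))$ (Lang--Néron). To show it is non-torsion, specialize to rational points where $\sigma$ has infinite order, for instance via the reductions at $13$ and $17$ used in Theorem~\ref{thm:C}(2). Alternatively, note that a torsion section of order $N$ would force $\sigma(R)$ to be torsion at every smooth fibre over $\beta(B)$. That contradicts bad reduction behaviour near $e$: the height local contribution at points of $B$ over $\{a=0\}$ forces $\delta_\beta>0$. I expect this step to be the main obstacle. Each curve $\beta(B)$ must avoid being contained in the torsion locus of $\sigma$, which means showing that $\sigma$ is non-torsion on the restricted family for \emph{every} curve through $e$. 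I would attempt it by computing the local height of $\sigma$ at the fibres where $a=0$, where $P_\infty^+$ collides with $P_\infty^-$, and showing that the local correction is positive there. Every curve meets $\{a=0\}$ because it is a line in $\PP^2$.

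Independence of $s_\beta$ from the parametrization follows because replacing $B$ by a cover of degree $k$ multiplies both $d_\beta$ and $\delta_\beta$ by $k$. Finiteness of torsion points of bounded degree then follows from Northcott. On torsion points the left side vanishes, so $h_{\inv}$, and hence $h_D$, is bounded, and only finitely many points of $B$ of bounded degree have bounded height.
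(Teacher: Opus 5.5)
Your outline follows the paper's architecture. You get trivial trace by specializing at a point $b_0$ over $e$, using Lemma~\ref{lem:simple31} and the nonconstancy of the moduli map. You then use a Tate-type specialization formula, the identity $h_{\inv}=2h_{\beta^*\OO(1)}+O(1)$, a degree-zero class for the square-root error, covers that multiply $\delta_\beta$ and $d_\beta$ alike, and Northcott. The genuine gap is the step you flag as the main obstacle, positivity of $\delta_\beta$. You leave it unresolved, although it follows in one line from the very observation you make, that a torsion section would be torsion on every smooth fibre. Since $\beta(b_0)=e$, the fibre at $b_0$ is $C_0$, which is smooth with $a=1$. So $b_0\in B^\circ$ and $\sigma_\beta(b_0)=D=[P_\infty^+-W]\in J_0(\Q)$, which has infinite order by Theorem~\ref{thm:nontorsion}. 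If $N\sigma_\beta=0$, then $ND=0$, a contradiction. Hence $\sigma_\beta$ is non-torsion for every curve through $e$ at once; this is exactly the role of the hypothesis $e\in\beta(B)$. Lang--N\'eron with trivial trace then gives $\delta_\beta>0$. Your suggestion to use the congruence lattice of Theorem~\ref{thm:grid} works only at this point, $R_{0,0}=(O,O)$: an arbitrary curve through $e$ need not pass through any other $\phi(R_{n,m})$.

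The alternative you propose, a local height computation over $\{a=0\}$, would not work by itself. At a smooth fibre with $a=0$ the coefficient $a^2$ of $X^6$ vanishes. The two points at infinity, where $y/x^3\to\pm a$, then merge into the Weierstrass point at infinity. So $\sigma$ specializes there to a difference of Weierstrass points, a $2$-torsion class. Getting global positivity from local terms would require controlling every bad-fibre correction, which amounts to computing $\delta_\beta$. The paper leaves that computation open (Section~\ref{sec:questions}). Since both the positivity of $s_\beta$ and your Northcott argument for torsion points use $\delta_\beta>0$, the gap affects those conclusions as well.

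Three smaller points:
\begin{enumerate}
\item Your fallback for a bad fibre at $b_0$ is unnecessary, because $Z^6f_0(X/Z)$ is squarefree.
\item The invariants of $F^{(2)}$ are forms of degrees $4,8,12,20$ in $(a,b,c)$, not $2,4,6,10$. This is what produces the factor $2$ you correctly state.
\item Silverman's theorem alone only gives an $o(h)$ error. What you actually use is that $\hhat_{J_b}(\sigma_\beta(b))$ is, up to $O(1)$, a Weil height for a class of degree $\delta_\beta$ on $B$. That is Carney's extension of Tate's theorem to abelian schemes, which is what the paper cites.
\end{enumerate}
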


Theorem~\ref{thm:D} is proved in Section~\ref{sec:specialization}. Its proof uses the specialization theorem of Tate, Silverman and Carney, and the slope $s_\beta$ is defined through a function-field N\'eron--Tate height; its numerical value is a natural target for further work (Section~\ref{sec:questions}). The case of the line $c=0$ gives a completely explicit family over $\Q$.

\begin{maincorollary}\label{cor:E}
For $u\in\Q$ let $C_u:\ y^2=f_0(x)+ux+u^2(x^5+x)$. There are $\delta>0$ and a finite set $\Sigma\subset\Q$ such that for every $u\in\Q\setminus\Sigma$ the curve $C_u$ has genus two, the class $[P_\infty^+-W]$ has infinite order in $\Jac(C_u)(\Q)$, and
\[
\hhat_{\Jac(C_u)}([P_\infty^+-W])=\delta\,h(u)+O(1)=\tfrac{\delta}{2}\,h_{\inv}([C_u])+O(1).
\]
The set $\Sigma$ contains none of $0,\pm1,\pm2,\pm3,\pm\tfrac13,\tfrac12,\tfrac23,\tfrac32$. The number of $\Qbar$-isomorphism classes of curves $C_u$ with $u\notin\Sigma$ and $\exp h_{\inv}([C_u])\le Y$ is $\gg Y$.
\end{maincorollary}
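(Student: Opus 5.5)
Corollary~\ref{cor:E} is Theorem~\ref{thm:D} applied to the line $\{c=0\}\subset\PP^2$, with $B=\PP^1$ and $\beta(u)=(1:u:0)$. This line passes through $e=(1:0:0)$, and $d_\beta=1$. On this line $a=1$, $b=u$, $c=0$, so $F_R(x,1)=f_0(x)+ux+u^2(x^5+x)$, which is exactly $C_u$.

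First I make the pullback to the line explicit. Theorem~\ref{thm:C}(1) shows that the moduli map factors through $[a:b:c]$. The Igusa--Clebsch invariants of $F_u$ are therefore polynomials $I_d(u)$ in $u$ with $\deg I_d\le 2d$, because the coefficients of the sextic are quadratic in $(a,b,c)$. Hence $h_{\inv}([C_u])=\max_d h(I_d(u))^{1/d}$, computed in the weighted sense, and this equals $2h(u)+O(1)$. The upper bound is elementary. For the lower bound I will use that the map $u\mapsto \Phi(u)$ is a finite morphism $\PP^1\to\PP(2,4,6,10)$ of weighted degree $2$ relative to $\OO(1)$; this is the restriction of $\Phi^*\OO(60)\simeq L^{120}$ from Theorem~\ref{thm:C}(1) via the degree of $[a:b:c]$. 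Functoriality of heights on $\PP^1$ then gives the error $O(1)$. The one thing to check is that the restricted map is nonconstant, i.e.\ that the invariants have no common zero at $u=\infty$ beyond what the degree count predicts. I will verify this from the leading term $u^2(x^5+x)$.

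Next, $\Sigma$ is defined as the union of three finite sets: the finitely many $u$ with $\Disc F_u=0$ (here $a\neq0$ holds automatically), the finitely many rational $u$ where $\sigma$ is torsion (the last sentence of Theorem~\ref{thm:D} with $n=1$), and the finitely many $u$ where the $O(1)$ estimate for positivity fails. Since $B$ has genus zero, Theorem~\ref{thm:D} gives $\hhat(\sigma)=s_\beta h_{\inv}+O(1)=2s_\beta h(u)+O(1)$. Setting $\delta=2s_\beta=\delta_\beta$ yields both displayed equalities.

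To show that the listed small values lie outside $\Sigma$, I must check that the discriminant is nonzero at each of them and that the point has infinite order there. The latter I would establish by reduction: count points of $\Jac(C_u)$ at two good primes (such as $13$ and $17$, as in Theorem~\ref{thm:C}(2)) to show that the torsion subgroup injects into a group in which the reduction of $\sigma$ is incompatible with being torsion. Some values coincide with parameters in Proposition~\ref{prop:small}, and those cases are covered there. This finite verification is the main practical obstacle. It is also where I would rely on computer checks.

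Finally, for the counting statement, the rationals $u$ with $h(u)\le T$ number about $\frac{12}{\pi^2}e^{2T}$. The condition $\exp h_{\inv}\le Y$ corresponds to $h(u)\le\frac12\log Y-O(1)$, which gives $\gg Y$ values of $u$. The moduli map $\PP^1\to\PP(2,4,6,10)$ is finite of bounded degree, so each isomorphism class arises from $O(1)$ values of $u$. Removing the finite set $\Sigma$ leaves a count $\gg Y$ of isomorphism classes.
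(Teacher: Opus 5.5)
Your proposal is correct and follows essentially the paper's route: Theorem~\ref{thm:D} (via Corollary~\ref{cor:lines}) on the line $c=0$, $\Sigma$ consisting of the singular and torsion parameters, reduction certificates for the listed $u$, and Schanuel's theorem plus finiteness of $u\mapsto\psi_2(1:u:0)$ for the count. Two small corrections. First, the input for $h_{\inv}([C_u])=2h(u)+O(1)$ is $\psi_2^*\Hq\simeq\OO_{\PP^2}(120)$ restricted to the line, not $\Phi_2^*\Hq\simeq L^{\otimes120}$ on $E\times E$; base-point freeness on the whole line, including $u=\infty$, is already given by Theorem~\ref{thm:C}(1). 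Second, none of the listed $u$ is covered by Proposition~\ref{prop:small}, since all of its parameters have $c\ne0$, so each value needs its own certificate. The paper obtains these from Lemma~\ref{lem:criteria}(2), using coprime orders of $\Jac(C_u)(\F_p)$ at two good primes together with $\sigma\ne0$, and handles $u=0$ by Theorem~\ref{thm:nontorsion}.
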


For the elliptic curves $T\mapsto([u]T,[v]T)$ in $E\times E$ we obtain quadratic--linear expansions $\hhat_J(\sigma)=\delta_{u,v}\hhat_E(T)+2\langle T,R_{u,v}\rangle_E+O(1)$, the symmetries $\delta_{du,dv}=d^2\delta_{u,v}$ and $\delta_{v,u}=\delta_{u,v}$, and exact canonical limits that identify the slope with a ratio of a function-field N\'eron--Tate height and a weighted N\'eron--Tate height (Theorems~\ref{thm:ray} and~\ref{thm:canonical-identities}).

\subsection*{Strategy}
The proofs combine three ingredients. The comparison between weighted heights and canonical heights on the parameter space is a formal consequence of the height machine once the pullback of the quotient polarization has been identified, and we identify it explicitly. The comparison with GIT heights rests on an elementary inequality between invariants and orbit-minimal norms. The comparison with the N\'eron--Tate height of the section needs two arithmetic inputs, both certified by finite computations: the section has infinite order, which we prove by point counts modulo $13$ and $17$, and the Jacobian over $e$ is absolutely simple, which we prove by reduction modulo $31$. Once these are known, the specialization theorem turns the comparison into a comparison of two line bundles on the base curve.

\subsection*{Organization}
Section~\ref{sec:weighted} recalls weighted heights. Section~\ref{sec:abelian} proves Theorem~\ref{thm:A}, and Section~\ref{sec:elliptic} treats weighted Weierstrass models. Section~\ref{sec:git} compares weighted and GIT heights for an arbitrary reductive group, and Section~\ref{sec:sextics} specializes to binary sextics and proves Theorem~\ref{thm:B}. Section~\ref{sec:pencil} studies a pencil with five fixed branch points and an exact place-by-place formula. Section~\ref{sec:surface} constructs the families over $E\times E$, and Section~\ref{sec:sections} proves Theorem~\ref{thm:C}. Section~\ref{sec:specialization} proves Theorem~\ref{thm:D}, Corollary~\ref{cor:E} and the results on elliptic curves in $E\times E$. Section~\ref{sec:questions} lists some questions, and Appendix~\ref{app:data} collects the computational certificates; a script reproducing all of them is provided as supplementary material.

\subsection*{Conventions}
$K$ denotes a number field, $M_K$ its set of places, and $|\cdot|_v$ the absolute value extending the normalized absolute value of $\Q$; we put $n_v=[K_v:\Q_v]$. Heights are absolute and logarithmic; $h$ is the standard height on projective space. For a line bundle $N$ on a projective variety, $h_N$ is a Weil height for $N$, determined up to $O(1)$. On a genus-two Jacobian with a rational Weierstrass point $W$ we use the N\'eron--Tate height attached to the symmetric theta divisor $\Theta_W$, the image of $P\mapsto[P-W]$.

\section{Weighted heights and the complete Veronese system}\label{sec:weighted}

Weighted heights are best understood through the Veronese map given by all monomials of a fixed weighted degree. This section records the exact identity relating the two heights, which is the basic tool for everything that follows.

Let $\ww=(w_0,\ldots,w_N)$ be a tuple of positive integers, and let $\PP(\ww)$ be the weighted projective space $\Proj K[X_0,\ldots,X_N]$ with $\deg X_i=w_i$. For a nonzero vector $x=(x_0,\ldots,x_N)\in K^{N+1}$ set
\begin{equation}\label{eq:hw}
h_{\ww}([x_0:\cdots:x_N])=\frac1{[K:\Q]}\sum_{v\in M_K}n_v\log\max_{0\le i\le N}|x_i|_v^{1/w_i}.
\end{equation}
By the product formula this is unchanged under $x_i\mapsto\lambda^{w_i}x_i$, and passing to a field containing the coordinates defines an absolute height on $\PP(\ww)(\Kbar)$.

Fix a positive integer $m$ divisible by every $w_i$ and let $\mathcal I_m=\{\alpha\in\Z_{\ge0}^{N+1}:\sum_iw_i\alpha_i=m\}$. The monomials of weighted degree $m$ define a morphism
\[
\nu_m\colon\PP(\ww)\longrightarrow\PP^{|\mathcal I_m|-1},\qquad x\longmapsto(x^\alpha)_{\alpha\in\mathcal I_m},
\]
without base points, since the pure powers $X_i^{m/w_i}$ are among these monomials.

\begin{proposition}\label{prop:monomial}
For every $x\in\PP(\ww)(\Kbar)$ one has $h(\nu_m(x))=m\,h_{\ww}(x)$, and $\nu_m^*\OO(1)\simeq\OO_{\PP(\ww)}(m)$.
\end{proposition}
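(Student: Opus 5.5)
The height identity will follow from a place-by-place equality of local terms, and the line bundle statement from the definition of $\OO_{\PP(\ww)}(m)$ via Proj.

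\emph{Height identity.} Fix $x=(x_0,\ldots,x_N)\in K^{N+1}\setminus\{0\}$ with $K$ containing the coordinates. For each place $v$ I claim
\[
\max_{\alpha\in\mathcal I_m}|x^\alpha|_v=\Bigl(\max_i|x_i|_v^{1/w_i}\Bigr)^m .
\]
Put $M_v=\max_i|x_i|_v^{1/w_i}$. For the upper bound, any $\alpha\in\mathcal I_m$ gives
\[
|x^\alpha|_v=\prod_i\bigl(|x_i|_v^{1/w_i}\bigr)^{w_i\alpha_i}\le M_v^{\sum_i w_i\alpha_i}=M_v^m .
\]
For the lower bound, take the pure power $X_{i_0}^{m/w_{i_0}}$, where $i_0$ attains the maximum; its value has absolute value $|x_{i_0}|_v^{m/w_{i_0}}=M_v^m$. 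This is where the hypothesis $w_i\mid m$ is used.

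Taking logarithms, multiplying by $n_v/[K:\Q]$ and summing over $v$ gives $h(\nu_m(x))=m\,h_{\ww}(x)$. Both sides are independent of the representative: the right side by the product formula as in \eqref{eq:hw}, and the left because rescaling $x_i\mapsto\lambda^{w_i}x_i$ multiplies every coordinate of $\nu_m(x)$ by $\lambda^m$. Both are also independent of $K$, so the identity holds on $\PP(\ww)(\Kbar)$.

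\emph{Line bundle.} By construction $\nu_m$ is the morphism attached to the graded homomorphism
\[
K[Y_\alpha]\to K[X_0,\ldots,X_N],\qquad Y_\alpha\mapsto X^\alpha,
\]
which multiplies degrees by $m$. It is a morphism on all of $\PP(\ww)$ because the $X_i^{m/w_i}$ generate an ideal whose radical is irrelevant, so there are no base points. For such a morphism, $\nu_m^*\OO(1)$ is the sheaf associated to the shifted module $S(m)$, where $S=K[X_0,\ldots,X_N]$. This sheaf is $\OO_{\PP(\ww)}(m)$. On $D_+(X_i)$ it is generated by $X_i^{m/w_i}$, which is exactly the pullback of $Y_{\alpha_i}$ with $\alpha_i$ the pure-power index. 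Since $w_i\mid m$, the sheaf $\OO(m)$ is invertible. This agrees with the standard fact that $\OO(m)$ is invertible exactly when $m$ is divisible by $\operatorname{lcm}(w_i)$.

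\emph{Main obstacle.} The only delicate point is this last step. For weighted projective spaces, $\OO(k)$ need not be invertible and $\OO(a)\otimes\OO(b)\to\OO(a+b)$ need not be an isomorphism. So I would check the identification of $\nu_m^*\OO(1)$ with $\widetilde{S(m)}$ chart by chart on $D_+(X_i)$, using that $X_i^{m/w_i}$ is a unit generator there, rather than appealing to general formalism.
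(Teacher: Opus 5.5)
Your proposal is correct and is essentially the paper's proof. You use the same local bound $|x^\alpha|_v\le R_v^m$ with equality at a pure power $x_j^{m/w_j}$, summed over places, and the same identification $\nu_m^*\OO(1)\simeq\OO(m)$ via generating sections of the invertible sheaf $\OO(m)$, which you verify chart by chart on $D_+(X_i)$. One small caveat: your aside that $\OO(m)$ is invertible \emph{exactly} when $\mathrm{lcm}(w_i)\mid m$ needs well-formedness (on $\PP(1,2)$ the sheaf $\OO(1)$ is already trivial), but you only use the ``if'' direction, so the argument is unaffected.
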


\begin{proof}
Put $R_v=\max_i|x_i|_v^{1/w_i}$. For $\alpha\in\mathcal I_m$ we have $|x^\alpha|_v\le\prod_iR_v^{w_i\alpha_i}=R_v^m$, with equality for the pure power $x_j^{m/w_j}$ of an index $j$ realizing the maximum. Hence $\max_\alpha|x^\alpha|_v=R_v^m$ at every place, and summing gives the identity of heights. The second statement holds because $\nu_m$ is given by generating sections of the invertible sheaf $\OO(m)$.
\end{proof}

The proof uses only the pure powers, but the full list of monomials is needed to obtain a morphism that separates points. The pure powers alone give the same maximum but need not separate points: in $\PP(1,1,2)$ the points $(1:1:1)$ and $(1:-1:1)$ have the same squares. For heights only the presence of all pure powers matters. For general weights $\OO(1)$ need not be invertible, which is why we work with $\OO(m)$.

\begin{corollary}\label{cor:pullback}
Let $X$ be a projective variety over $K$, $f\colon X\to\PP(\ww)$ a morphism and $N=f^*\OO_{\PP(\ww)}(m)$. Then $m\,h_{\ww}(f(P))=h_N(P)+O(1)$ uniformly for $P\in X(\Kbar)$.
\end{corollary}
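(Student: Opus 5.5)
The plan is to compose $f$ with the Veronese morphism $\nu_m$ and use Proposition~\ref{prop:monomial} to reduce the statement to the standard height machine on ordinary projective space. Put $g=\nu_m\circ f\colon X\to\PP^{M}$ with $M=|\mathcal I_m|-1$. By Proposition~\ref{prop:monomial} we have $\nu_m^*\OO_{\PP^M}(1)\simeq\OO_{\PP(\ww)}(m)$. Pulling back along $f$ then gives
\[
g^*\OO_{\PP^M}(1)\simeq f^*\nu_m^*\OO_{\PP^M}(1)\simeq f^*\OO_{\PP(\ww)}(m)=N .
\]
Because $m$ is divisible by every $w_i$, the sheaf $\OO_{\PP(\ww)}(m)$ is invertible and globally generated by the monomials in $\mathcal I_m$. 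So $N$ is an honest line bundle on $X$, and $g$ is a morphism to projective space defined by base-point-free sections of $N$.

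The argument then runs in three steps.

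First, apply the identity of Proposition~\ref{prop:monomial} pointwise at $x=f(P)$. It holds exactly, with no error term, and gives $h(g(P))=h(\nu_m(f(P)))=m\,h_{\ww}(f(P))$ for every $P\in X(\Kbar)$.

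Second, invoke the functoriality of Weil heights for morphisms to projective space: for any morphism $g\colon X\to\PP^M$ one has $h\circ g=h_{g^*\OO(1)}+O(1)$ on $X(\Kbar)$. This is part of the definition, or basic construction, of the height machine, since $h_N$ may be taken to be $h\circ g$ for any such $g$.

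Third, combine the two. We get $m\,h_{\ww}(f(P))=h\circ g(P)=h_N(P)+O(1)$, uniformly in $P$, because $h_N$ is only defined up to a bounded function.

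I do not expect a genuine obstacle. The one point to check is that $N$ is indeed a line bundle on $X$, so that $h_N$ makes sense. This holds because the open sets $\{X_i\ne0\}$ cover $\PP(\ww)$, and on each of them $X_i^{m/w_i}$ trivializes $\OO(m)$.

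One should also observe that the identity does not depend on any choice of coordinates for $f$. Rescaling a weighted representative by $\lambda^{w_i}$ multiplies every degree-$m$ monomial by $\lambda^m$, so $g$ is well defined. The left side is already invariant by the product formula noted after \eqref{eq:hw}.
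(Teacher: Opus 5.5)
Your proposal is correct and is essentially the paper's proof: you compose with the Veronese map $\nu_m$, use the exact identity $h(\nu_m(f(P)))=m\,h_{\ww}(f(P))$ and the isomorphism $(\nu_m\circ f)^*\OO(1)\simeq N$ from Proposition~\ref{prop:monomial}, and then apply the height machine for morphisms to ordinary projective space. Your extra checks, that $\OO(m)$ is invertible and that the result does not depend on the weighted representative, are sound and are left implicit in the paper.
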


\begin{proof}
By Proposition~\ref{prop:monomial} the left side equals $h(\nu_m(f(P)))$, and $(\nu_m\circ f)^*\OO(1)\simeq N$.
\end{proof}

\section{Weighted heights on abelian varieties}\label{sec:abelian}

We now restrict to morphisms whose source is an abelian variety. Here the height machine is especially rigid: every Weil height is a quadratic form plus a linear form plus a bounded function. This section makes that decomposition explicit for weighted heights, isolates its canonical part, which we call the weighted N\'eron--Tate height, and proves Theorem~\ref{thm:A}.

Let $A/K$ be an abelian variety. We write the group law of $\Pic(A)\otimes\Q$ additively and, for $N\in\Pic(A)$, put $N^{\pm}=\tfrac12(N\pm[-1]^*N)$. The class $N^+$ is symmetric, and $N^-$ lies in $\Pic^0(A)\otimes\Q$ because $[-1]$ acts trivially on the N\'eron--Severi group.

\subsection{The quadratic--linear decomposition}

We begin with the two building blocks of the height machine on $A$; heights attached to classes in $\Pic(A)\otimes\Q$ are defined by linearity.

\begin{lemma}\label{lem:eigen}
Let $S\in\Pic(A)\otimes\Q$ be symmetric and $M\in\Pic^0(A)\otimes\Q$. There are functions $\hhat_S$ and $\lambda_M$ on $A(\Kbar)$, with $\hhat_S$ a quadratic form and $\lambda_M$ a homomorphism, such that $h_S=\hhat_S+O(1)$ and $h_M=\lambda_M+O(1)$. Thus $\hhat_S([n]P)=n^2\hhat_S(P)$ and $\lambda_M([n]P)=n\lambda_M(P)$ for all $n\in\Z$.
\end{lemma}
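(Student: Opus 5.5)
The plan is Tate's telescoping argument, applied to $[2]$ and to $[n]$. Since heights of classes in $\Pic(A)\otimes\Q$ are defined by linearity, we may clear denominators and assume $S$ and $M$ are honest line bundles; both conclusions are $\Q$-linear in the class, so nothing is lost. The key inputs are the theorem of the cube in its symmetric and antisymmetric forms, $[n]^*S\simeq S^{\otimes n^2}$ for symmetric $S$ and $[n]^*M\simeq M^{\otimes n}$ for $M\in\Pic^0(A)$. We also need functoriality of the height machine, $h_{\phi^*N}=h_N\circ\phi+O(1)$, with the $O(1)$ depending on $\phi$.

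For $S$ these give $h_S([2]P)=4h_S(P)+O(1)$. The sequence $4^{-j}h_S([2^j]P)$ is therefore Cauchy, with consecutive differences bounded by $C4^{-j}$. So the limit $\hhat_S(P)$ exists and satisfies $\hhat_S=h_S+O(1)$. The same argument with $2$ replaced by $n$ shows that $\lim_j n^{-2j}h_S([n^j]P)$ differs from $h_S$ by $O(1)$. Two functions that are both $h_S+O(1)$ and both scale by $n^2$ under $[n]$ must coincide: their difference $g$ is bounded and satisfies $g([n]P)=n^2g(P)$, hence $g=0$. This gives $\hhat_S([n]P)=n^2\hhat_S(P)$.

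To see that $\hhat_S$ is a quadratic form, I would use the theorem of the cube on $A^3$, which gives, for the maps $s,d\colon A\times A\to A$ sending $(x,y)$ to $x+y$ and $x-y$,
\[
s^*S\otimes d^*S\simeq p_1^*S^{\otimes2}\otimes p_2^*S^{\otimes2}.
\]
Taking heights gives the parallelogram law up to $O(1)$. Replacing $(P,Q)$ by $([2^j]P,[2^j]Q)$ and dividing by $4^j$ makes it exact. A function satisfying the exact parallelogram law is a quadratic form: it is even, takes the value $0$ at $O$, and $\hhat_S(P+Q)-\hhat_S(P)-\hhat_S(Q)$ is bi-additive. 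This last step is the standard algebraic lemma.

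For $M$, the relation $h_M([2]P)=2h_M(P)+O(1)$ gives the limit $\lambda_M(P)=\lim_j 2^{-j}h_M([2^j]P)$ in the same way, with $\lambda_M=h_M+O(1)$. For additivity, since $M\in\Pic^0(A)$ we have $m^*M\simeq p_1^*M\otimes p_2^*M$ on $A\times A$, where $m$ is the addition map. Hence $h_M(P+Q)=h_M(P)+h_M(Q)+O(1)$, and the normalization by $2^{-j}$ makes this exact, so $\lambda_M$ is a homomorphism. Then $\lambda_M([n]P)=n\lambda_M(P)$ is automatic.

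I expect the main obstacle, such as it is, to be bookkeeping rather than any real difficulty. The $O(1)$ constants depend on the morphism but not on $P$, which is exactly what the telescoping bound needs. Working in $\Pic\otimes\Q$ is justified by the linearity remark above.
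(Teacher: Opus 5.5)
Your proposal is correct and is essentially the paper's proof: Tate's telescoping along $[2]$ using $[2]^*S\simeq S^{\otimes4}$ and $[2]^*M\simeq M^{\otimes2}$, the theorem of the cube for the parallelogram law of $\hhat_S$, and $\mu^*M\simeq p_1^*M\otimes p_2^*M$ followed by renormalization for the additivity of $\lambda_M$. One small point: your uniqueness argument for $\hhat_S([n]P)=n^2\hhat_S(P)$ assumes that the $[2]$-limit already scales by $n^2$ under $[n]$, which is what it is meant to prove. This is harmless, because the identity follows by substituting $[n]P$ into the defining limit and using $h_S\circ[n]=n^2h_S+O_n(1)$, or simply from the exact parallelogram law you establish next.
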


\begin{proof}
By the theorem of the cube, $[2]^*S\simeq S^{\otimes4}$ and $[2]^*M\simeq M^{\otimes2}$ \cite[Sections~6 and~8]{MumfordAV}. Hence $h_S([2]P)=4h_S(P)+O(1)$ and $h_M([2]P)=2h_M(P)+O(1)$, and Tate's telescoping argument shows that $\hhat_S=\lim4^{-j}h_S\circ[2^j]$ and $\lambda_M=\lim2^{-j}h_M\circ[2^j]$ exist and differ from $h_S$, $h_M$ by bounded functions. The theorem of the cube gives the parallelogram law for $\hhat_S$. For $M$, the addition map $\mu$ satisfies $\mu^*M\simeq p_1^*M\otimes p_2^*M$, so $h_M(P+Q)=h_M(P)+h_M(Q)+O(1)$; applying this to $([2^j]P,[2^j]Q)$ and dividing by $2^j$ shows that $\lambda_M$ is additive \cite{BombieriGubler,HindrySilverman}.
\end{proof}

\begin{theorem}\label{thm:decomposition}
Let $f\colon A\to\PP(\ww)$ be a morphism, $m$ a common multiple of the weights, $N=f^*\OO_{\PP(\ww)}(m)$, $q_N=\hhat_{N^+}$ and $\ell_N=\lambda_{N^-}$. Then
\begin{equation}\label{eq:decomp}
m\,h_{\ww}(f(P))=q_N(P)+\ell_N(P)+O(1)
\end{equation}
uniformly on $A(\Kbar)$. Consequently $m\,h_{\ww}(f([n]P))=n^2q_N(P)+n\ell_N(P)+O(1)$ uniformly in $P$ and $n\in\Z$, and
\begin{equation}\label{eq:evenodd}
\begin{aligned}
m\bigl(h_{\ww}(f(P))+h_{\ww}(f(-P))\bigr)&=2q_N(P)+O(1),\\
m\bigl(h_{\ww}(f(P))-h_{\ww}(f(-P))\bigr)&=2\ell_N(P)+O(1).
\end{aligned}
\end{equation}
\end{theorem}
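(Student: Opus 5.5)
The plan is to combine Corollary~\ref{cor:pullback} with Lemma~\ref{lem:eigen}, working throughout in $\Pic(A)\otimes\Q$.

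\emph{Step 1: reduce to heights of line bundles.} By Corollary~\ref{cor:pullback}, $m\,h_{\ww}(f(P))=h_N(P)+O(1)$ with $N=f^*\OO_{\PP(\ww)}(m)$. In $\Pic(A)\otimes\Q$ we have the decomposition $N=N^++N^-$. Here $N^+$ is symmetric, and $N^-\in\Pic^0(A)\otimes\Q$, as noted before Lemma~\ref{lem:eigen}. To make sense of $h_{N^\pm}$, we clear denominators: $2N^{\pm}=N\otimes([-1]^*N)^{\pm1}$ is a genuine line bundle. We then set $h_{N^\pm}=\tfrac12 h_{2N^\pm}$. Additivity of the height machine gives $h_N=h_{N^+}+h_{N^-}+O(1)$.

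\emph{Step 2: apply Lemma~\ref{lem:eigen}.} The lemma is applied to the rational classes $S=N^+$ and $M=N^-$, by linearity from the integral classes $2N^\pm$. This gives $h_{N^+}=q_N+O(1)$ and $h_{N^-}=\ell_N+O(1)$, which proves \eqref{eq:decomp}.

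\emph{Step 3: the statement about multiples.} Replacing $P$ by $[n]P$ in \eqref{eq:decomp}, the $O(1)$ term stays uniform in $P$ and $n$. This is because the bound in \eqref{eq:decomp} is uniform over all of $A(\Kbar)$, and $[n]P$ is just another point of $A(\Kbar)$. The homogeneity relations $q_N([n]P)=n^2q_N(P)$ and $\ell_N([n]P)=n\ell_N(P)$ from Lemma~\ref{lem:eigen} then give the formula for $m\,h_{\ww}(f([n]P))$.

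\emph{Step 4: the even and odd parts.} We take $n=-1$, which gives $m\,h_{\ww}(f(-P))=q_N(P)-\ell_N(P)+O(1)$. Adding this to \eqref{eq:decomp} and subtracting it from \eqref{eq:decomp} yields the two lines of \eqref{eq:evenodd}.

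The only point requiring care is the bookkeeping with rational classes in Step 1. The argument should state explicitly that heights for classes in $\Pic(A)\otimes\Q$ are defined by $h_{kD}/k$, and that this is well defined up to $O(1)$. The uniformity of $O(1)$ in $n$ in Step 3 is automatic, for the reason given there. Otherwise the proof is formal.
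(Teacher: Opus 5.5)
Your proposal is correct and follows the paper's proof step for step: you use Corollary~\ref{cor:pullback} to pass to $h_N$, split $N=N^++N^-$ and apply Lemma~\ref{lem:eigen} to each piece, substitute $[n]P$ using the exact homogeneity of $q_N$ and $\ell_N$, and then add and subtract the case $n=-1$. Your explicit clearing of denominators via $2N^{\pm}=N\otimes([-1]^*N)^{\pm1}$ just spells out the paper's convention that heights of classes in $\Pic(A)\otimes\Q$ are defined by linearity.
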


\begin{proof}
By Corollary~\ref{cor:pullback} the left side of \eqref{eq:decomp} is $h_N+O(1)$, and $h_N=h_{N^+}+h_{N^-}+O(1)$ since $N=N^++N^-$. Lemma~\ref{lem:eigen} gives \eqref{eq:decomp}. Applying it at $[n]P$ and using the scaling properties gives the second statement, and adding or subtracting the formulas for $P$ and $-P$ gives \eqref{eq:evenodd}.
\end{proof}

\begin{example}\label{ex:linear}
The linear term is genuinely present in general. Let $E/K$ be an elliptic curve, $R\in E(K)$ a point of infinite order and $f_R\colon E\to\PP^1$, $Q\mapsto x(Q-R)$. Then $f_R^*\OO(1)\simeq\OO_E(2(R))$, and the canonical height of the $x$-coordinate gives
\[
h(f_R(Q))=2\hhat_E(Q-R)+O(1)=2\hhat_E(Q)-4\langle Q,R\rangle_E+O_R(1).
\]
For $Q=[n]R$ this is $2n^2\hhat_E(R)-4n\hhat_E(R)+O(1)$, which differs from the quadratic term by an unbounded amount. The bundle $f_R^*\OO(1)$ is ample but not symmetric.
\end{example}

\subsection{The weighted N\'eron--Tate height}
Theorem~\ref{thm:decomposition} suggests normalizing away both the bounded and the linear part. Doubling does this, because the quadratic part grows by a factor of four while the linear part grows only by a factor of two.

\begin{definition}\label{def:wnt}
Let $f\colon A\to\PP(\ww)$ be a morphism. The \emph{weighted N\'eron--Tate height} of $f$ is
\[
\hhat_{f,\ww}(P)=\lim_{j\to\infty}4^{-j}\,h_{\ww}\bigl(f([2^j]P)\bigr),\qquad P\in A(\Kbar).
\]
\end{definition}

\begin{theorem}\label{thm:wnt}
In the notation of Theorem~\ref{thm:decomposition}, the limit defining $\hhat_{f,\ww}$ exists, does not depend on $m$, and
\begin{equation}\label{eq:wnt}
\hhat_{f,\ww}=\frac1m\hhat_{N^+},\qquad h_{\ww}\circ f=\hhat_{f,\ww}+\frac1m\lambda_{N^-}+O(1).
\end{equation}
It is a quadratic form, $\hhat_{f,\ww}([n]P)=n^2\hhat_{f,\ww}(P)$, and
\[
\hhat_{f,\ww}(P+Q)+\hhat_{f,\ww}(P-Q)=2\hhat_{f,\ww}(P)+2\hhat_{f,\ww}(Q).
\]
If $f$ is finite, then $\hhat_{f,\ww}(P)\ge0$ with equality if and only if $P$ is a torsion point.
\end{theorem}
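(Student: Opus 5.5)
The plan is to feed the doubling limit into the decomposition of Theorem~\ref{thm:decomposition}. Applying \eqref{eq:decomp} at the point $[2^j]P$ and using the scaling relations of Lemma~\ref{lem:eigen}, I expect
\[
m\,h_{\ww}\bigl(f([2^j]P)\bigr)=4^jq_N(P)+2^j\ell_N(P)+O(1),
\]
where the $O(1)$ is uniform in $j$ and $P$. After dividing by $4^j m$, the linear term contributes $2^{-j}\ell_N(P)/m$ and the bounded term contributes $O(4^{-j})$. Both go to $0$, so the limit exists and equals $q_N(P)/m=\hhat_{N^+}(P)/m$. The second identity in \eqref{eq:wnt} then follows at once by substituting this back into \eqref{eq:decomp}.

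Independence of $m$ I would read off from the definition: the limit $\lim 4^{-j}h_{\ww}(f([2^j]P))$ makes no reference to $m$, so once it exists it cannot depend on $m$. As a consistency check, replacing $m$ by $km$ replaces $N$ by $N^{\otimes k}$ and multiplies $\hhat_{N^+}$ by $k$.

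The algebraic properties are inherited from $\hhat_{N^+}$. Lemma~\ref{lem:eigen} says that $\hhat_{N^+}$ is a quadratic form, which gives $n^2$-homogeneity and the parallelogram law, and dividing by $m$ preserves both.

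The only step with real content is the positivity statement for finite $f$, and this is where I expect the main obstacle.
\begin{itemize}
\item[(a)] First I would show that $N^+$ is ample. Since $f$ is finite and $\OO_{\PP(\ww)}(m)$ is ample, $N=f^*\OO(m)$ is ample. Then $[-1]^*N$ is ample too, because $[-1]$ is an automorphism. Hence $2N^+=N+[-1]^*N$ is an ample symmetric class.
\item[(b)] Next I would invoke the standard fact that the canonical height of an ample symmetric line bundle is nonnegative and vanishes exactly on torsion points \cite{BombieriGubler,HindrySilverman}.
\end{itemize}
To prove (b) directly, one direction is immediate: $\hhat(T)=0$ for torsion $T$ by $n^2$-homogeneity. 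For the converse, ampleness gives $\hhat\ge h_{N^+}-O(1)$, which is bounded below, and together with homogeneity this forces $\hhat\ge0$. If $\hhat(P)=0$, then every $[n]P$ lies in $A(K')$ for a fixed number field $K'$ and has bounded $h_{N^+}$. Northcott's theorem then makes the set $\{[n]P\}$ finite, so $P$ is torsion. Applying this to $2N^+$ and dividing by $2m$ gives the claim.
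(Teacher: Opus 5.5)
Your proposal is correct and follows the paper's proof essentially step for step: you apply the decomposition at $[2^j]P$ and divide by $m4^j$ so that the linear and bounded terms vanish in the limit, you inherit the quadratic-form properties from $\hhat_{N^+}$, and for finite $f$ you use that $N^+=\frac12(N+[-1]^*N)$ is ample and symmetric. The only differences are your direct Northcott argument for the classical positivity property, which the paper simply cites, and your reading of independence of $m$ directly from the definition, where the paper instead checks that replacing $N$ by $N^{\otimes k}$ leaves $\frac1m\hhat_{N^+}$ unchanged.
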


\begin{proof}
Apply Theorem~\ref{thm:decomposition} with $n=2^j$ and divide by $m4^j$: the linear term contributes $2^{-j}\ell_N(P)/m$ and the error is $O(4^{-j})$. This proves \eqref{eq:wnt}; replacing $m$ by $km$ replaces $N$ by $N^{\otimes k}$ and leaves $\frac1m\hhat_{N^+}$ unchanged. If $f$ is finite, $N$ is ample because $\OO(m)$ is ample on $\PP(\ww)$, and then $N^+=\frac12(N+[-1]^*N)$ is ample; the last assertion is the classical property of N\'eron--Tate heights of ample symmetric classes.
\end{proof}

In general the linear term $\frac1m\lambda_{N^-}$ in \eqref{eq:wnt} is present, as Example~\ref{ex:linear} shows. The next theorem gives a simple criterion, checkable on equations, for it to vanish.

\begin{theorem}\label{thm:symmetric}
Suppose that there is a graded $K$-algebra automorphism $\varphi$ of $K[X_0,\ldots,X_N]$ such that the induced automorphism $\bar\varphi$ of $\PP(\ww)$ satisfies $f\circ[-1]=\bar\varphi\circ f$. Then $\ell_N=0$ and
\[
h_{\ww}(f(P))=\hhat_{f,\ww}(P)+O(1)\qquad(P\in A(\Kbar)).
\]
If $\varphi$ is diagonal, $X_i\mapsto\varepsilon_iX_i$ with roots of unity $\varepsilon_i$, then $h_{\ww}(f(-P))=h_{\ww}(f(P))$ exactly.
\end{theorem}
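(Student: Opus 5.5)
The plan is to show that the pullback bundle $N=f^*\OO_{\PP(\ww)}(m)$ is fixed by $[-1]$, so that $N^-=0$ in $\Pic(A)\otimes\Q$ and hence $\ell_N=\lambda_{N^-}=0$. Theorem~\ref{thm:wnt} then gives $h_{\ww}\circ f=\hhat_{f,\ww}+O(1)$.

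\emph{Step 1: $\bar\varphi$ preserves $\OO(m)$.} A graded automorphism $\varphi$ of $K[X_0,\ldots,X_N]$ induces, for every degree $k$, a linear automorphism of the degree-$k$ piece. It therefore gives an isomorphism $\bar\varphi^*\OO(k)\simeq\OO(k)$ on $\Proj$. Concretely, with $m$ divisible by all weights, $\varphi$ permutes the space spanned by the monomials in $\mathcal I_m$ linearly. So there is a linear automorphism $\psi\in\GL(|\mathcal I_m|)$ with $\nu_m\circ\bar\varphi=\psi\circ\nu_m$. Since $\psi^*\OO(1)\simeq\OO(1)$ on projective space, Proposition~\ref{prop:monomial} yields $\bar\varphi^*\OO(m)\simeq\OO(m)$.

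\emph{Step 2: $N$ is symmetric.} From $f\circ[-1]=\bar\varphi\circ f$ we get
\[
[-1]^*N=f^*\bar\varphi^*\OO(m)\simeq f^*\OO(m)=N.
\]
Hence $N^-=\tfrac12(N-[-1]^*N)=0$, so $\ell_N=\lambda_0=0$ by Lemma~\ref{lem:eigen}. Theorem~\ref{thm:decomposition} together with \eqref{eq:wnt} now gives the displayed formula. A height-level argument is also available and avoids line bundles: $h(\psi(y))=h(y)+O(1)$ for a fixed linear automorphism $\psi$. Therefore $m\,h_{\ww}(f(-P))=h(\psi\nu_m f(P))=m\,h_{\ww}(f(P))+O(1)$, and the second line of \eqref{eq:evenodd} forces $2\ell_N(P)=O(1)$. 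A homomorphism to $\R$ that is bounded vanishes identically, since $\ell_N([n]P)=n\ell_N(P)$.

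\emph{Step 3: the diagonal case.} If $\varphi(X_i)=\varepsilon_iX_i$, then $f(-P)=[\varepsilon_0x_0:\cdots:\varepsilon_Nx_N]$ whenever $f(P)=[x_0:\cdots:x_N]$. We have $|\varepsilon_ix_i|_v^{1/w_i}=|x_i|_v^{1/w_i}$ at every place. So each local term in \eqref{eq:hw} is unchanged, and $h_{\ww}(f(-P))=h_{\ww}(f(P))$ exactly.

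The main point requiring care is Step 1. When the weights are not all $1$, a graded automorphism need not be linear in the variables; for instance $X_2\mapsto X_2+X_1^2$ in $\PP(1,1,2)$. Well-definedness on $\Proj$ therefore needs the observation that $\varphi$ preserves each graded piece. Working through $\nu_m$ and the linear map $\psi$ on degree-$m$ monomials handles this cleanly, because $\varphi$ sends each monomial of degree $m$ to a polynomial of degree $m$, and that is a linear combination of monomials in $\mathcal I_m$.
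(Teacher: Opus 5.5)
Your proof is correct, but your main argument takes a different route from the paper's. You work in $\Pic(A)$. The linearization $\nu_m\circ\bar\varphi=\psi\circ\nu_m$ gives $\bar\varphi^*\OO(m)\simeq\OO(m)$, hence $[-1]^*N\simeq N$, so $N^-=0$ and $\ell_N=\lambda_0=0$ at once.

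The paper never identifies $\bar\varphi^*\OO(m)$ and stays with heights. It bounds $|\varphi_i(x)|_v^{1/w_i}\le c_v\max_j|x_j|_v^{1/w_j}$ directly in weighted coordinates, with $c_v=1$ for almost all $v$, and does the same for $\varphi^{-1}$. This gives $h_{\ww}\circ\bar\varphi=h_{\ww}+O(1)$. The paper then uses \eqref{eq:evenodd} to get $\ell_N=O(1)$, and kills the bounded homomorphism via $\ell_N([n]P)=n\ell_N(P)$. That is exactly your secondary argument, except that you get the $O(1)$ comparison from $h\circ\psi=h+O(1)$ on $\PP^{|\mathcal I_m|-1}$ rather than from the weighted local estimate.

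What your bundle-level route buys is the structural fact that $N$ itself is symmetric. This is the real content of the hypothesis: in Example~\ref{ex:linear} it is precisely the non-symmetry of the pullback bundle that produces the linear term. It also makes $\ell_N=0$ immediate, with no boundedness-plus-homogeneity argument.

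The paper's route is more elementary. It uses nothing beyond Theorem~\ref{thm:decomposition}. It also treats non-linear graded automorphisms such as $X_2\mapsto X_2+X_1^2$ directly, through weighted homogeneity of the $\varphi_i$, without passing through the Veronese map.

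The diagonal case is handled identically in both.
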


\begin{proof}
Write $\varphi(X_i)=\varphi_i(X)$ with $\varphi_i$ weighted homogeneous of degree $w_i$. At every place, $|\varphi_i(x)|_v^{1/w_i}\le c_v\max_j|x_j|_v^{1/w_j}$ with $c_v=1$ for almost all $v$, and the same holds for $\varphi^{-1}$. Hence $h_{\ww}(\bar\varphi(y))=h_{\ww}(y)+O(1)$ on $\PP(\ww)(\Kbar)$, and therefore $h_{\ww}(f(-P))=h_{\ww}(f(P))+O(1)$. By \eqref{eq:evenodd}, $\ell_N(P)=O(1)$ uniformly in $P$. Since $\ell_N([n]P)=n\ell_N(P)$ is bounded in $n$, we get $\ell_N=0$. For diagonal $\varphi$ the local maxima in \eqref{eq:hw} are unchanged.
\end{proof}

\begin{corollary}\label{cor:para}
If $f$ satisfies the hypothesis of Theorem~\ref{thm:symmetric}, then
\[
\bigl|h_{\ww}(f(P+Q))+h_{\ww}(f(P-Q))-2h_{\ww}(f(P))-2h_{\ww}(f(Q))\bigr|\le C
\]
for a constant $C$ and all $P,Q\in A(\Kbar)$. For arbitrary $f$ the same holds for the even height $\frac12\bigl(h_{\ww}(f(P))+h_{\ww}(f(-P))\bigr)$.
\end{corollary}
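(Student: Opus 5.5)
The plan is to reduce both assertions to the parallelogram law for the quadratic form $\hhat_{f,\ww}$ in Theorem~\ref{thm:wnt}. The error terms are controlled by the decomposition of Theorem~\ref{thm:decomposition}, in the form \eqref{eq:wnt}.

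First suppose $f$ satisfies the hypothesis of Theorem~\ref{thm:symmetric}. That theorem gives $h_{\ww}\circ f=\hhat_{f,\ww}+O(1)$ uniformly on $A(\Kbar)$, with a single bounded function $\varepsilon(P)$, $|\varepsilon|\le C_0$. I apply this at each of the four points $P+Q$, $P-Q$, $P$ and $Q$ and substitute into the expression on the left-hand side. The quadratic parts cancel exactly by the parallelogram identity of Theorem~\ref{thm:wnt}. What remains is $\varepsilon(P+Q)+\varepsilon(P-Q)-2\varepsilon(P)-2\varepsilon(Q)$, which is bounded by $6C_0$. So the first statement holds with $C=6C_0$.

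For arbitrary $f$, I would use \eqref{eq:wnt} in the form
\[
h_{\ww}(f(P))=\hhat_{f,\ww}(P)+\tfrac1m\lambda_{N^-}(P)+\varepsilon(P),\qquad |\varepsilon|\le C_0.
\]
Since $\hhat_{f,\ww}(-P)=\hhat_{f,\ww}(P)$ and $\lambda_{N^-}(-P)=-\lambda_{N^-}(P)$ (a homomorphism by Lemma~\ref{lem:eigen}), averaging over $P$ and $-P$ gives
\[
\tfrac12\bigl(h_{\ww}(f(P))+h_{\ww}(f(-P))\bigr)=\hhat_{f,\ww}(P)+\tfrac12\bigl(\varepsilon(P)+\varepsilon(-P)\bigr).
\]
The linear term cancels exactly, so the even height is $\hhat_{f,\ww}+O(1)$ with bound $C_0$. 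The same four-point substitution as before then gives the parallelogram inequality with constant $6C_0$.

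The only point requiring care is uniformity. The $O(1)$ in \eqref{eq:wnt} must be one bounded function on all of $A(\Kbar)$, independent of $P$ and $Q$. This holds because it comes from the height machine applied to the fixed class $N=N^++N^-$, with $\hhat_{N^+}$ and $\lambda_{N^-}$ differing from Weil heights by bounded functions. Once that is granted, there is no real obstacle; the argument is a formal consequence of the results already proved.
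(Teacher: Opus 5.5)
Your proposal is correct and matches the paper's proof: the paper notes that both functions differ by a bounded amount from the quadratic form $\hhat_{f,\ww}$, by Theorem~\ref{thm:symmetric} and \eqref{eq:evenodd} respectively, and then applies the exact parallelogram law. Deriving the even case from \eqref{eq:wnt} using the oddness of $\lambda_{N^-}$ amounts to the same thing as reading it off \eqref{eq:evenodd}.
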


\begin{proof}
Both functions differ by a bounded amount from the quadratic form $\hhat_{f,\ww}$, by Theorem~\ref{thm:symmetric} and \eqref{eq:evenodd} respectively.
\end{proof}

\subsection{Counting on Mordell--Weil groups}
Once the weighted N\'eron--Tate height is known to be a positive definite quadratic form, counting points of bounded weighted height on a Mordell--Weil group becomes a lattice-point problem; the linear term only shifts the centre of the ellipsoids.

\begin{theorem}\label{thm:count}
Let $f\colon A\to\PP(\ww)$ be finite, $\Gamma\subseteq A(K)$ a finitely generated subgroup of rank $r\ge1$ with torsion subgroup $T$, and $P_1,\ldots,P_r$ a basis of $\Gamma/T$. Let $G_\Gamma$ be the Gram matrix of $\hhat_{f,\ww}$ on $P_1,\ldots,P_r$ and $V_r=\pi^{r/2}/\Gamma(1+\tfrac r2)$. Then
\begin{equation}\label{eq:count}
\#\{P\in\Gamma:\ h_{\ww}(f(P))\le X\}=\frac{|T|\,V_r}{\sqrt{\det G_\Gamma}}\,X^{r/2}+O\bigl(X^{(r-1)/2}\bigr),
\end{equation}
where for $r=1$ the error term is $O(1)$.
\end{theorem}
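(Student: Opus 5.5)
The plan is to reduce the count to a lattice-point count for a shifted ellipsoid in $\R^r$. By Theorem~\ref{thm:wnt}, $h_{\ww}\circ f=\hhat_{f,\ww}+\ell+O(1)$ with $\ell=\frac1m\lambda_{N^-}$ a homomorphism, and $\hhat_{f,\ww}$ is a quadratic form which, since $f$ is finite, is nonnegative and vanishes exactly on torsion. On $\Gamma$ both $\hhat_{f,\ww}$ and $\ell$ factor through $\Gamma/T\cong\Z^r$. I would write $P=t+\sum n_iP_i$ with $t\in T$, $n\in\Z^r$, so that $\hhat_{f,\ww}(P)=n^{\mathsf T}G_\Gamma n$ and $\ell(P)=\lambda\cdot n$ for a vector $\lambda\in\R^r$. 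Positive definiteness of $G_\Gamma$ is the first point to secure: $\hhat_{f,\ww}$ extends to a positive semidefinite form on $\Gamma\otimes\R$ and is positive on nonzero lattice points, but this alone does not exclude degeneracy. Instead I would use that $N^+$ is ample and symmetric, so $\hhat_{N^+}$ is positive definite on $A(K)\otimes\R$ (the standard N\'eron--Tate result, via finiteness of points of bounded height and Northcott). Hence $\det G_\Gamma>0$.

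Next, the bounded error $O(1)$ has to be absorbed. Let $Q(n)=n^{\mathsf T}Gn+\lambda\cdot n$. There is a constant $c$ with $\{Q\le X-c\}\cap\Z^r\subseteq\{n:h_{\ww}(f(P))\le X\}\subseteq\{Q\le X+c\}\cap\Z^r$, for each $t\in T$. It therefore suffices to count lattice points in $E_Y=\{x\in\R^r:Q(x)\le Y\}$ for $Y=X\pm c$, and multiply by $|T|$. Completing the square, $Q(x)=(x-x_0)^{\mathsf T}G(x-x_0)-\frac14\lambda^{\mathsf T}G^{-1}\lambda$ with $x_0=-\frac12G^{-1}\lambda$. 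So $E_Y$ is the ellipsoid centred at $x_0$ of ``radius squared'' $Y'=Y+\frac14\lambda^{\mathsf T}G^{-1}\lambda$, with volume $V_rY'^{r/2}/\sqrt{\det G}$.

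The key step is the lattice-point estimate. The number of points of $\Z^r$ in a convex body $K$ equals $\mathrm{vol}(K)+O(\text{surface measure of a unit neighbourhood of }\partial K)$. I would prove this by comparing $K$ with the union of unit cubes centred at lattice points, which lies between $K_{-\sqrt r}$ and $K_{+\sqrt r}$ (inner and outer parallel bodies). For an ellipsoid of linear size $\asymp Y^{1/2}$ this gives $\mathrm{vol}+O(Y^{(r-1)/2})$, uniformly in the centre $x_0$. Finally, $Y'^{r/2}=X^{r/2}+O(X^{r/2-1})$ since $Y'=X+O(1)$, and $X^{r/2-1}\le X^{(r-1)/2}$ for $X\ge1$. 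This yields \eqref{eq:count}. For $r=1$ the set is an interval of length $\asymp\sqrt X$, and the count is its length $+O(1)$, giving the sharper error $O(1)$.

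I expect no serious obstacle. The only delicate points are the positive definiteness of $G_\Gamma$ on $\Gamma\otimes\R$ (not merely on $\Gamma$), which rests on the classical ampleness argument, and the uniformity of the geometry-of-numbers error with respect to the shifted centre and the $O(1)$ thickening. Both are handled by the parallel-body sandwich above.
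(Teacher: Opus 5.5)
Your proposal is correct and follows the paper's proof essentially step for step. You write $P=t+\sum n_iP_i$ and use Theorem~\ref{thm:wnt} to express $h_{\ww}(f(P))$ as $n^tG_\Gamma n+\lambda\cdot n+O(1)$; you then complete the square, sandwich the condition between two translated ellipsoids, count lattice points uniformly in the centre, and multiply by $|T|$. Your added justification that $G_\Gamma$ is positive definite on $\Gamma\otimes\R$, via the classical result for ample symmetric classes, fills in a point the paper attributes somewhat tersely to Theorem~\ref{thm:wnt}.
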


\begin{proof}
Write $P=T_0+\sum n_iP_i$ with $T_0\in T$ and $n\in\Z^r$. By Theorem~\ref{thm:wnt}, $h_{\ww}(f(P))=n^tG_\Gamma n+b^tn+O(1)$ for some $b\in\R^r$, uniformly in $P$; both the quadratic and the linear part vanish on torsion. Completing the square, the condition $h_{\ww}(f(P))\le X$ is sandwiched between $(n+c)^tG_\Gamma(n+c)\le X\mp C_1$ with $c=\frac12G_\Gamma^{-1}b$. The matrix $G_\Gamma$ is positive definite by Theorem~\ref{thm:wnt}. A translated ellipsoid of radius $\sqrt X$ contains $V_rX^{r/2}/\sqrt{\det G_\Gamma}+O(X^{(r-1)/2})$ lattice points, and multiplying by $|T|$ gives \eqref{eq:count}.
\end{proof}

Theorem~\ref{thm:A} is now proved: part (1) is Theorems~\ref{thm:decomposition} and~\ref{thm:wnt}, part (2) is Theorems~\ref{thm:wnt} and~\ref{thm:count}, and part (3) is Theorem~\ref{thm:symmetric}. In particular $\{P\in\Gamma:h_{\ww}(f(P))\le X\}$ is finite and grows like $X^{r/2}$. Finiteness of $f$ cannot be dropped: a constant map, or one factoring through a quotient of smaller dimension, has a degenerate pullback height.

\section{Weighted Weierstrass models}\label{sec:elliptic}

The simplest weighted embedding of an abelian variety is a Weierstrass cubic in $\PP(1,2,3)$. We treat it separately for two reasons: the comparison constants can be made completely explicit, and the curve $E:y^2=x^3-2$ that appears here is the building block of the parameter spaces of Sections~\ref{sec:surface}--\ref{sec:specialization}.

Let $E:y^2=x^3+a_4x+a_6$ with $a_4,a_6\in\Z$ and $4a_4^3+27a_6^2\ne0$, embedded in $\PP(1,2,3)$ by $y^2=x^3+a_4xz^4+a_6z^6$. For an affine point $P=(x,y)\in E(\Q)$,
\[
h_{1,2,3}(P)=\sum_{v\in M_\Q}\log\max\{1,|x|_v^{1/2},|y|_v^{1/3}\},
\]
and $h_{1,2,3}(O)=0$, where $O=(0:1:1)$.

\begin{proposition}\label{prop:local}
Let $P=(x,y)\in E(\Q)$ be affine. At every prime $p$, $\max\{1,|x|_p^{1/2},|y|_p^{1/3}\}=\max\{1,|x|_p^{1/2}\}$, and at the real place
\[
\max\{1,|x|^{1/2}\}\le\max\{1,|x|^{1/2},|y|^{1/3}\}\le(1+|a_4|+|a_6|)^{1/6}\max\{1,|x|^{1/2}\}.
\]
Consequently
\begin{equation}\label{eq:effective}
\tfrac12h(x(P))\le h_{1,2,3}(P)\le\tfrac12h(x(P))+\tfrac16\log(1+|a_4|+|a_6|).
\end{equation}
\end{proposition}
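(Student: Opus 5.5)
The proof works place by place, using only the Weierstrass equation $y^2=x^3+a_4x+a_6$ with integral coefficients, and then sums the local bounds over $M_\Q$. Since $P$ is rational, $h(x(P))=\sum_v\log\max\{1,|x|_v\}$. So it is enough to compare, at each place, $M_v=\max\{1,|x|_v^{1/2},|y|_v^{1/3}\}$ with $m_v=\max\{1,|x|_v^{1/2}\}$. The lower bound $m_v\le M_v$ holds trivially at every place, because $m_v$ is a maximum over a subset of the quantities defining $M_v$.

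At a prime $p$ I use the ultrametric inequality together with $|a_4|_p,|a_6|_p\le1$. This gives
\[
|y|_p^2=|x^3+a_4x+a_6|_p\le\max\{|x|_p^3,|x|_p,1\}=\max\{1,|x|_p\}^3.
\]
Hence $|y|_p^{1/3}\le\max\{1,|x|_p\}^{1/2}=m_p$, and therefore $M_p=m_p$. At the real place the triangle inequality gives
\[
|y|^2\le|x|^3+|a_4||x|+|a_6|\le(1+|a_4|+|a_6|)\max\{1,|x|\}^3,
\]
where each monomial is bounded by $\max\{1,|x|\}^3$. Taking sixth roots yields $|y|^{1/3}\le(1+|a_4|+|a_6|)^{1/6}m_\infty$. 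Both $1$ and $|x|^{1/2}$ are already at most $m_\infty$, and the constant is at least $1$, so the stated real inequality follows.

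To obtain \eqref{eq:effective}, take logarithms and sum over all places. The finite places contribute exactly $\sum_p\log m_p$. The real place contributes $\log m_\infty$ plus an error lying between $0$ and $\frac16\log(1+|a_4|+|a_6|)$. Finally, $\sum_v\log m_v=\frac12\sum_v\log\max\{1,|x|_v\}=\frac12h(x(P))$.

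No step is a genuine obstacle. The only point needing care is that the affine coordinates $(x:y:1)$ represent $P$ in $\PP(1,2,3)$, so that the height formula with the entry $1$ for $z$ applies directly and no rescaling by $\lambda^{w_i}$ is needed.
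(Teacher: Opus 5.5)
Your proof is correct and follows essentially the same route as the paper: the ultrametric bound $|y|_p^2\le\max\{1,|x|_p\}^3$ at each prime, the triangle-inequality bound $|y|^2\le(1+|a_4|+|a_6|)\max\{1,|x|\}^3$ at the real place, and summation over $M_\Q$. The only difference is that the paper splits the $p$-adic step into the cases $|x|_p\le1$ and $|x|_p>1$ (getting the exact equality $|y|_p^{1/3}=|x|_p^{1/2}$ in the second), whereas your single bound already suffices.
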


\begin{proof}
If $|x|_p\le1$ then $|y|_p^2=|x^3+a_4x+a_6|_p\le1$. If $|x|_p>1$, the term $x^3$ strictly dominates and $|y|_p^{1/3}=|x|_p^{1/2}$. At the real place, with $M_\infty=\max\{1,|x|\}$, we have $|y|^2\le(1+|a_4|+|a_6|)M_\infty^3$.
\end{proof}

Proposition~\ref{prop:local} shows that the $y$-coordinate contributes at most a bounded amount, so the weighted height is controlled by the $x$-coordinate alone. Its canonical form is the usual N\'eron--Tate height.

\begin{corollary}\label{cor:ellcanon}
The weighted N\'eron--Tate height of the embedding $E\hookrightarrow\PP(1,2,3)$ is $\hhat_E$, and
\[
h_{1,2,3}([n]P)=n^2\hhat_E(P)+O_E(1)
\]
uniformly in $P\in E(\Q)$ and $n\in\Z$.
\end{corollary}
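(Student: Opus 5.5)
The plan is to combine Proposition~\ref{prop:local} with Theorem~\ref{thm:symmetric}. First I check that the embedding $f\colon E\hookrightarrow\PP(1,2,3)$, $(x,y)\mapsto(1:x:y)$ with $O=(0:1:1)$, satisfies the hypothesis of Theorem~\ref{thm:symmetric}. Inversion on $E$ is $(x,y)\mapsto(x,-y)$, i.e.\ $[z:x:y]\mapsto[z:x:-y]$. This is induced by the diagonal graded automorphism $\varphi$ with $Z\mapsto Z$, $X\mapsto X$, $Y\mapsto -Y$, which preserves the equation $y^2=x^3+a_4xz^4+a_6z^6$. Theorem~\ref{thm:symmetric} then gives $\ell_N=0$ and $h_{1,2,3}\circ f=\hhat_{f,\ww}+O(1)$; in fact $h_{1,2,3}(-P)=h_{1,2,3}(P)$ exactly, since $\varphi$ is diagonal.

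Next I identify $\hhat_{f,\ww}$ with $\hhat_E$. By \eqref{eq:effective}, $h_{1,2,3}(Q)=\tfrac12h(x(Q))+O(1)$ for all affine $Q$, with a bound depending only on $a_4,a_6$. The convention $h(x(O))=0$ covers $Q=O$ as well. Applying this at $Q=[2^j]P$, dividing by $4^j$ and letting $j\to\infty$, the bounded error vanishes in the limit. The left side tends to $\hhat_{f,\ww}(P)$ by Definition~\ref{def:wnt}, and the right side tends to $\tfrac12\lim4^{-j}h(x([2^j]P))=\hhat_E(P)$ by the normalization fixed in the introduction. Hence $\hhat_{f,\ww}=\hhat_E$.

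Finally, the uniform statement follows from $h_{1,2,3}(Q)=\hhat_E(Q)+O_E(1)$ applied at $Q=[n]P$, together with $\hhat_E([n]P)=n^2\hhat_E(P)$. The $O(1)$ comes from Theorem~\ref{thm:symmetric}, or directly from \eqref{eq:effective} and the classical bound $\tfrac12h(x(Q))=\hhat_E(Q)+O_E(1)$. In either case it is independent of $Q$, and therefore uniform in $P$ and $n$.

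I expect no real obstacle. The only point needing care is that Definition~\ref{def:wnt} and Theorem~\ref{thm:wnt} are stated over $\Kbar$ while the corollary concerns $E(\Q)$; the restriction is harmless. One should also note that \eqref{eq:effective} is stated for rational points, but only a uniform $O(1)$ over the points $[2^j]P$ is needed, and this holds.
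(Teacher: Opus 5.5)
Your proposal is correct and follows the paper's proof: you apply Theorem~\ref{thm:symmetric} using the diagonal graded automorphism $Y\mapsto -Y$, and then use \eqref{eq:effective} together with the canonical height of the $x$-coordinate to identify the canonical part with $\hhat_E$. Your explicit limit along $[2^j]P$ simply spells out the identification $h_{1,2,3}=\tfrac12 h\circ x+O(1)=\hhat_E+O(1)$ that the paper states in one line.
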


\begin{proof}
Inversion is $(z:x:y)\mapsto(z:x:-y)$, a diagonal graded automorphism, so Theorem~\ref{thm:symmetric} applies. By \eqref{eq:effective} and the canonical height of the $x$-coordinate, $h_{1,2,3}=\frac12h\circ x+O(1)=\hhat_E+O(1)$.
\end{proof}

\begin{example}\label{ex:two}
Let $E:y^2=x^3-2$ and $P_0=(3,5)$. Then $h_{1,2,3}(P_0)=\frac12\log3$, $[2]P_0=(\frac{129}{100},-\frac{383}{1000})$ and $h_{1,2,3}([2]P_0)=\frac12\log129$. The group $E(\Q)_{\tors}$ is trivial: a nonzero torsion point would be integral with $y=0$ or $y^2\mid108$ by the Nagell--Lutz theorem \cite{SilvermanAEC}, but $2$ is not a cube and none of $3,6,11,38$ is a cube. In particular $P_0$ has infinite order. The convergence in Corollary~\ref{cor:ellcanon} is visible numerically: with $t_j=4^{-j}h_{1,2,3}([2^j]P_0)$ one finds
\[
\begin{array}{c|ccccccc}
j&0&1&2&3&4&6&8\\\hline
t_j&0.5493&0.6075&0.6742&0.6742&0.6742&0.6748&0.6748
\end{array}
\]
so that $\hhat_E(P_0)=0.6748\ldots$.
\end{example}

\begin{corollary}\label{cor:ellcount}
If $E(\Q)$ has rank $r\ge1$ with N\'eron--Tate regulator $\mathrm{Reg}_E$ (for $\hhat_E$), then
\[
\#\{P\in E(\Q):\ h_{1,2,3}(P)\le X\}=\frac{|E(\Q)_{\tors}|\,V_r}{\sqrt{\mathrm{Reg}_E}}X^{r/2}+O_E\bigl(X^{(r-1)/2}\bigr).
\]
\end{corollary}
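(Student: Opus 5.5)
This is a direct application of Theorem~\ref{thm:count} to the finite morphism $f\colon E\hookrightarrow\PP(1,2,3)$ with $\Gamma=E(\Q)$. I would carry it out in three steps.

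\emph{Step 1: the hypotheses.} The embedding is a closed immersion, hence finite. By Mordell--Weil, $E(\Q)$ is finitely generated, of rank $r\ge1$ by assumption, with torsion subgroup $E(\Q)_{\tors}$. Let $P_1,\ldots,P_r$ be a basis of $E(\Q)/E(\Q)_{\tors}$.

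\emph{Step 2: the Gram determinant.} By Corollary~\ref{cor:ellcanon}, the weighted N\'eron--Tate height of this embedding is exactly $\hhat_E$. So the quadratic form in Theorem~\ref{thm:count} is $\hhat_E$. Its associated bilinear form $\langle P,Q\rangle=\tfrac12\bigl(\hhat_E(P+Q)-\hhat_E(P)-\hhat_E(Q)\bigr)$ is the pairing $\langle\ ,\ \rangle_E$. The Gram matrix $G_\Gamma$ is then $(\langle P_i,P_j\rangle_E)$, and $\det G_\Gamma=\mathrm{Reg}_E$, with the regulator normalized for $\hhat_E$ as the statement specifies. The one point to check is the normalization. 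The quadratic form in Theorem~\ref{thm:count} must be evaluated as $n^tG_\Gamma n=\hhat_E(\sum n_iP_i)$, and this holds because $\hhat_E$ is a quadratic form whose polar form is $\langle\ ,\ \rangle_E$. There is no extra factor of $2$, since the paper's $\hhat_E$ already carries the factor $\tfrac12$ in front of $h\circ x$.

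\emph{Step 3: no linear term.} Part (3) of Theorem~\ref{thm:A} (Theorem~\ref{thm:symmetric}) applies, as shown in the proof of Corollary~\ref{cor:ellcanon}. So the linear term vanishes, and the ellipsoids in the proof of Theorem~\ref{thm:count} are centered at the origin. This is not needed for the asymptotic, but it simplifies the argument.

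Substituting $|T|=|E(\Q)_{\tors}|$ and $\det G_\Gamma=\mathrm{Reg}_E$ into \eqref{eq:count} gives the stated formula with error $O_E(X^{(r-1)/2})$, or $O(1)$ when $r=1$. The implied constant depends only on $E$, since the $O(1)$ in Corollary~\ref{cor:ellcanon} does. The only real obstacle is the normalization check in Step 2.
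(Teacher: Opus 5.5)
Your proposal is correct and is the paper's argument: the corollary is Theorem~\ref{thm:count} applied to the finite map $E\hookrightarrow\PP(1,2,3)$ with $\Gamma=E(\Q)$, where $\hhat_{f,\ww}=\hhat_E$ by Corollary~\ref{cor:ellcanon}. Your normalization check is also right; the point that matters is that $\mathrm{Reg}_E$ is the determinant of the polar form normalized by $\langle P,P\rangle_E=\hhat_E(P)$, which is how the paper normalizes $\langle\ ,\ \rangle_E$ (compare Example~\ref{ex:linear}), and, as you say, the vanishing of the linear term is not needed.
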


\begin{proof}
This is Theorem~\ref{thm:count} for the finite map $E\hookrightarrow\PP(1,2,3)$.
\end{proof}

The quotient of $E$ by $[-1]$ gives a first, very small example of the interplay between weighted heights and GIT quotients studied in the next section.

\begin{example}\label{ex:gitinvolution}
The invariant ring of $K[z,x,y]/(y^2-x^3-a_4xz^4-a_6z^6)$ under $y\mapsto-y$ is $K[z,x]$, so $E/\{\pm1\}\simeq\PP(1,2)$ with quotient map $\pi(z:x:y)=(z:x)$. At affine points $h_{(1,2)}(\pi(P))=\frac12h(x(P))$ exactly, $\pi^*\OO(2)\simeq\OO_E(2O)$, and the weighted N\'eron--Tate height of $\pi$ is again $\hhat_E$. This is the simplest instance of a GIT quotient, for the finite group $\{\pm1\}$, in which the weighted height on the quotient pulls back to a N\'eron--Tate height.
\end{example}

\section{Weighted heights and GIT heights}\label{sec:git}

A quotient of a projective space by a reductive group embeds in a weighted projective space through a set of generating invariants, so the invariant height of a point of the quotient is a weighted height in disguise. A second and more intrinsic height is obtained by minimizing a norm over the orbit at every place. This section compares the two, first at the level of the polarization and then at the level of local terms.

\subsection{Invariant heights on quotients}
Let $G$ be a reductive group over $K$ acting linearly on a finite-dimensional vector space $V$, and let
\[
R=\bigoplus_{d\ge0}H^0\bigl(\PP(V),\OO(d)\bigr)^G,\qquad Y=\Proj R
\]
\cite{MumfordGIT}. Suppose $R$ is generated by homogeneous invariants $I_0,\ldots,I_r$ of degrees $w_0,\ldots,w_r$. They define a closed immersion $Y\hookrightarrow\PP(\ww)$, and for a common multiple $m$ of the $w_i$ the sheaf $\Hq=\OO_Y(m)$ is invertible and ample. The \emph{invariant height} of $y\in Y(\Kbar)$ is $h_{\inv}(y)=h_{\ww}(I_0(x):\cdots:I_r(x))$ for any semistable $x$ over $y$.

\begin{proposition}\label{prop:quotient}
$m\,h_{\inv}=h_{\Hq}+O(1)$ on $Y(\Kbar)$. If $f\colon A\to Y$ is a morphism from an abelian variety, then $h_{\inv}\circ f=\hhat_{f,\ww}+\frac1m\lambda_{N^-}+O(1)$ with $N=f^*\Hq$, and $\hhat_{f,\ww}=\lim_j4^{-j}h_{\inv}(f([2^j]P))$.
\end{proposition}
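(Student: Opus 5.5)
The plan is to reduce everything to results already proved, with the closed immersion $\iota\colon Y\hookrightarrow\PP(\ww)$ given by $(I_0:\cdots:I_r)$ serving as the weighted morphism. First I check that $h_{\inv}$ is well defined. If $x$ is semistable, some invariant is nonzero at $x$, so $(I_0(x):\cdots:I_r(x))$ is a point of $\PP(\ww)$. Two semistable vectors over the same $y\in Y$, or two representatives $x,\lambda x$ of the same point of $\PP(V)$, give invariant vectors that differ by $I_i\mapsto\lambda^{w_i}I_i$. This holds because $Y=\Proj R$ and $\iota$ is injective on points. By the product formula such a rescaling does not change $h_{\ww}$. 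Hence $h_{\inv}=h_{\ww}\circ\iota$ as functions on $Y(\Kbar)$.

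Next, $\iota^*\OO_{\PP(\ww)}(m)\simeq\OO_Y(m)=\Hq$. This holds because $\iota$ comes from the graded surjection $K[X_0,\ldots,X_r]\to R$, $X_i\mapsto I_i$, and for $m$ divisible by all $w_i$ the twisting sheaves are compatible with graded surjections. Applying Corollary~\ref{cor:pullback} to $X=Y$ and $f=\iota$ then gives $m\,h_{\inv}=m\,h_{\ww}\circ\iota=h_{\Hq}+O(1)$. This is the first assertion.

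For the second assertion, let $f\colon A\to Y$ be a morphism from an abelian variety. I apply Theorems~\ref{thm:decomposition} and~\ref{thm:wnt} to the composite $\iota\circ f\colon A\to\PP(\ww)$. Its pullback is $(\iota\circ f)^*\OO(m)=f^*\Hq=N$, so \eqref{eq:wnt} reads
\[
h_{\inv}\circ f=h_{\ww}\circ(\iota\circ f)=\hhat_{\iota\circ f,\ww}+\tfrac1m\lambda_{N^-}+O(1).
\]
Writing $\hhat_{f,\ww}$ for $\hhat_{\iota\circ f,\ww}$, Definition~\ref{def:wnt} together with the identity $h_{\inv}=h_{\ww}\circ\iota$ gives the limit formula $\hhat_{f,\ww}(P)=\lim_j4^{-j}h_{\inv}(f([2^j]P))$. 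Its existence is part of Theorem~\ref{thm:wnt}.

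The only point needing care is the identification $\iota^*\OO(m)\simeq\OO_Y(m)$, and behind it the claim that the generators define a closed immersion in the first place. Both are standard facts about $\Proj$ of a quotient of a weighted polynomial ring, valid once $m$ is a common multiple of the weights so that $\OO(m)$ is invertible on both sides. Everything after that is formal.
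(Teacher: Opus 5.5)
Your proposal is correct and follows the paper's proof, which consists exactly of applying Corollary~\ref{cor:pullback} to the closed immersion $Y\hookrightarrow\PP(\ww)$ given by the generating invariants, and Theorem~\ref{thm:wnt} to the composite $A\to Y\hookrightarrow\PP(\ww)$. The extra details you supply are sound: that $h_{\inv}$ is well defined, that $\iota^*\OO(m)\simeq\OO_Y(m)$, and that $\hhat_{f,\ww}$ is to be read as $\hhat_{\iota\circ f,\ww}$.
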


\begin{proof}
Combine Corollary~\ref{cor:pullback} with Theorem~\ref{thm:wnt}.
\end{proof}

\begin{proposition}\label{prop:metric}
Let $\overline\Hq_1,\overline\Hq_2$ be two adelic metrics on a line bundle $\Hq$ on a projective variety $Y/K$, continuous at the archimedean places and equal at all but finitely many places. For $y\in Y(F)$ and a local section $s$ of $\Hq$ not vanishing at $y$,
\[
h_{\overline\Hq_2}(y)-h_{\overline\Hq_1}(y)=\frac1{[F:\Q]}\sum_{u\in M_F}n_u\log\frac{\|s(y)\|_{1,u}}{\|s(y)\|_{2,u}},
\]
and this difference is bounded on $Y(\Kbar)$. In particular, for a morphism $f\colon A\to Y$ from an abelian variety, the limit of $4^{-j}h_{\overline\Hq}(f([2^j]P))$ does not depend on the adelic metric.
\end{proposition}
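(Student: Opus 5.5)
The plan is to work directly from the definition of the height attached to an adelically metrized line bundle and reduce everything to a comparison of local norms. Recall that for $y\in Y(F)$ and a local section $s$ of $\Hq$ with $s(y)\neq0$, one has
\[
h_{\overline\Hq_i}(y)=-\frac1{[F:\Q]}\sum_{u\in M_F}n_u\log\|s(y)\|_{i,u}.
\]
This value is independent of $s$ by the product formula: changing $s$ to $gs$ multiplies $s(y)$ by the scalar $g(y)\in F^\times$. Subtracting the two expressions computed with the \emph{same} section $s$ gives the displayed formula immediately. It is a finite sum, because the two metrics agree at all but finitely many places, so all other terms vanish.

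For boundedness I will show that the ratio $\|\cdot\|_{1,u}/\|\cdot\|_{2,u}$ is a well-defined function on the fibres of $\Hq$ over $Y(\Kbar_u)$. It does not depend on the section $s$, since both metrics are norms on the same one-dimensional fibre, so the ratio is determined by the point alone. At an archimedean place both metrics are continuous, so the log of this ratio is a continuous function $\phi_u$ on the compact space $Y(\mathbf C)$ (or on $Y(\Kbar_u)$ with its analytic topology), and is therefore bounded. At each of the finitely many non-archimedean places where the metrics differ, I will use the standing convention that adelic metrics are continuous, or at least bounded, on $Y(\Kbar_u)$. Under that convention $\phi_u$ is bounded there too, for example because both metrics are comparable to a model metric coming from a projective model. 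The difference of heights is then an average, weighted by $n_u/[F:\Q]$, of finitely many bounded functions, one for each place $v$ of $K$ below the relevant places. Summing over $u\mid v$ with $\sum_{u\mid v}n_u=[F:\Q]\,n_v/[K:\Q]$ gives a bound independent of $F$, and hence a bound on all of $Y(\Kbar)$.

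The last assertion follows at once. The two limits differ by $4^{-j}$ times a bounded quantity, which tends to $0$. Existence of the limit for one metric, for instance the metric inducing $m\,h_{\inv}$ via Proposition~\ref{prop:quotient}, then gives existence and equality for any other metric.

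The only delicate step is uniformity over the base field at the finite places. This is where I must insist that the comparison function $\phi_u$ is a function on $Y(\Kbar_v)$ for each place $v$ of $K$, compatible with extension of fields, rather than something defined separately for each $F$.
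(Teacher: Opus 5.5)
Your proposal is correct and follows the same route as the paper: you compute both heights with the same section, so the difference is a finite sum of local log-ratios, each a bounded function of the point, and the limit statement follows because $4^{-j}$ times a bounded function tends to zero. Your extra care about uniformity over extensions $F/K$, and about the convention needed at the non-archimedean places, makes explicit what the paper's short proof leaves tacit; there the paper implicitly uses continuity on the compact Berkovich space $Y_u^{\rm an}$.
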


\begin{proof}
The local term is independent of $s$, since changing $s$ multiplies both norms by the same factor. At the finitely many places where the metrics differ, the ratio of the two norms is a continuous positive function on the compact analytic space $Y_u^{\rm an}$, hence bounded above and below.
\end{proof}

\subsection{Orbit-minimal heights}
The comparison in Proposition~\ref{prop:metric} is qualitative: it requires a metric on the quotient and gives no constant. For orbit-minimal heights one can do better, because an invariant is bounded on the whole orbit by the norm of any single point of the orbit.

Let now $G\subseteq\GL_{N+1}$ be reductive over $\Q$, acting on $V=\mathbf A^{N+1}$, and let $I_0,\ldots,I_r\in\Z[x_0,\ldots,x_N]$ be homogeneous $G$-invariants of degrees $w_i\ge1$ whose common zero locus in $V(\Qbar)$ is the nullcone $\{x:0\in\overline{Gx}\}$. Thus $x$ is semistable if and only if $I_i(x)\ne0$ for some $i$. For a polynomial $I$ let $\|I\|_1$ be the sum of the absolute values of its coefficients. For $x\in V(K)$ and $v\in M_K$ let $\|x\|_v=\max_j|x_j|_v$ (extended to $\Kbar_v$), and put
\[
\rho_v(x)=\inf_{g\in G(\Kbar_v)}\|gx\|_v,\qquad W_v(x)=\max_i|I_i(x)|_v^{1/w_i}.
\]
For semistable $x$ define
\begin{align*}
h_{\GIT}(x)&=\frac1{[K:\Q]}\sum_{v\in M_K}n_v\log\rho_v(x),\\
h_I(x)&=\frac1{[K:\Q]}\sum_{v\in M_K}n_v\log W_v(x)=h_{\ww}\bigl(I_0(x):\cdots:I_r(x)\bigr).
\end{align*}

\begin{theorem}\label{thm:gitgeneral}
Let $x\in V(K)$ be semistable and $c(I)=\max_i\frac1{w_i}\log\|I_i\|_1$.
\begin{enumerate}
\item $0<\rho_v(x)\le\|x\|_v$ for every $v$, and $\rho_v(x)=\|x\|_v=1$ for all but finitely many $v$. Hence $h_{\GIT}(x)$ is well defined, independent of $K$, and invariant under $G(\Qbar)$ and under scaling.
\item $W_v(x)\le\rho_v(x)$ for every finite $v$, and $W_v(x)\le\|I\|^{*}\rho_v(x)$ for every archimedean $v$, where $\|I\|^*=\max_i\|I_i\|_1^{1/w_i}$.
\item $h_I(x)\le h_{\GIT}(x)+c(I)$.
\item $h_{\GIT}(x)\le h(gx)$ for every $g\in G(\Qbar)$. In particular $h_I(x)\le h(gx)+c(I)$ for every $g\in G(\Qbar)$.
\item If $v$ is finite and $\|x\|_v=1$, then $0\le\log\rho_v(x)-\log W_v(x)\le-\log W_v(x)$. The local difference vanishes whenever some $I_i(x)$ is a $v$-adic unit.
\end{enumerate}
\end{theorem}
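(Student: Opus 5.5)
The proof will rest on one elementary local inequality. Since each $I_i$ is $G$-invariant, $I_i(gx)=I_i(x)$ for all $g\in G(\Kbar_v)$. At any place $v$, expanding $I_i(y)$ as a sum of monomials of degree $w_i$ gives $|I_i(y)|_v\le\|I_i\|_{1,v}^{*}\,\|y\|_v^{w_i}$. Here $\|I_i\|_{1,v}^{*}$ equals $1$ at finite $v$, because the coefficients are integers and the ultrametric inequality applies. At archimedean $v$ it equals $\|I_i\|_1$, by the triangle inequality. Applying this to $y=gx$ and taking $w_i$-th roots gives $W_v(x)\le\|I\|^*\|gx\|_v$ in the archimedean case and $W_v(x)\le\|gx\|_v$ in the finite case. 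Taking the infimum over $g$ proves (2).

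For (1), the upper bound $\rho_v(x)\le\|x\|_v$ is immediate from $g=1$. For positivity, semistability gives some $I_i(x)\ne0$, and then (2) yields $\rho_v(x)\ge W_v(x)/\|I\|^*>0$. For almost all $v$, both $\|x\|_v=1$ and $|I_i(x)|_v=1$ hold for a fixed $i$ with $I_i(x)\neq0$. Then $1=|I_i(x)|_v^{1/w_i}\le W_v(x)\le\rho_v(x)\le\|x\|_v=1$, so the sum defining $h_{\GIT}$ is finite.

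The remaining properties in (1) follow as below.
\begin{itemize}
\item Independence of $K$: $\rho_v$ depends only on $\Kbar_v$, so the local terms are compatible with the factors $n_v/[K:\Q]$ under field extension, in the usual way.
\item Invariance under $G(\Qbar)$: enlarge $K$ to contain the entries of $g$; the infimum over $G(\Kbar_v)$ is unchanged by translation by $g$.
\item Invariance under scaling: $\rho_v(\lambda x)=|\lambda|_v\rho_v(x)$, and the product formula removes the factor.
\end{itemize}
I expect the main care to lie in this part of (1), namely the field-independence and the use of infima over $\Kbar_v$, which are routine but must be stated carefully.

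Part (3) comes from summing the logarithm of (2) with the weights $n_v/[K:\Q]$. Since $\sum_{v\mid\infty}n_v=[K:\Q]$, the archimedean places contribute at most $\log\|I\|^*=c(I)$ in total. Part (4) follows by choosing $g$ in each place's infimum: $\rho_v(x)=\rho_v(gx)\le\|gx\|_v$ by (1) applied to $gx$, and summing gives $h_{\GIT}(x)\le h(gx)$. Combined with (3), this gives the second claim of (4).

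For (5), suppose $\|x\|_v=1$ at a finite place $v$. The chain $W_v(x)\le\rho_v(x)\le\|x\|_v=1$ gives $0\le\log\rho_v-\log W_v\le-\log W_v$. If some $I_i(x)$ is a unit, then $W_v(x)\ge1$, which forces $W_v(x)=\rho_v(x)=1$ and so the difference vanishes.
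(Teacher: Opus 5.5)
Your proposal is correct and matches the paper's proof essentially step by step. Both derive (2) from $G$-invariance together with the coefficient bound (triangle inequality at archimedean places, integrality of coefficients at finite ones). The remaining parts then follow in the same way in both: positivity and almost-all triviality in (1), (3) by summing with $\sum_{v\mid\infty}n_v=[K:\Q]$, (4) via $\rho_v(x)=\rho_v(gx)\le\|gx\|_v$, and (5) from the chain $W_v(x)\le\rho_v(x)\le\|x\|_v=1$.
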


\begin{proof}
Since the $I_i$ are $G$-invariant polynomials, $I_i(gy)=I_i(y)$ for all $g\in G(\Kbar_v)$ and $y\in V(\Kbar_v)$. At an archimedean place $|I_i(y)|_v\le\|I_i\|_1\|y\|_v^{w_i}$ by the triangle inequality, and at a finite place $|I_i(y)|_v\le\|y\|_v^{w_i}$ because $I_i$ has integer coefficients. Applying this to $y=gx$ and taking the infimum over $g$ proves (2). As $x$ is semistable, some $I_i(x)\ne0$, so $W_v(x)>0$ and $\rho_v(x)>0$ by (2); taking $g=1$ gives $\rho_v(x)\le\|x\|_v$. For all but finitely many $v$ the vector $x$ is primitive at $v$ and a fixed nonzero $I_i(x)$ is a unit, so $1=W_v(x)\le\rho_v(x)\le\|x\|_v=1$. Independence of $K$ holds because $\rho_v$ depends only on the place of $\Kbar$; invariance under $G(\Qbar)$ is clear, and invariance under scaling follows from $\rho_v(\lambda x)=|\lambda|_v\rho_v(x)$ and the product formula. This proves (1). Summing (2) over all places, with $\sum_{v\mid\infty}n_v=[K:\Q]$, gives (3). For (4), $\rho_v(x)=\rho_v(gx)\le\|gx\|_v$ for every place. Finally (5) follows from (2) and $\rho_v(x)\le1$.
\end{proof}

\begin{remark}\label{rem:reverse}
The inequality in (3) is effective. By Zhang's geometric reductivity at the archimedean places \cite{Zhang1994} and Burnol's results at the finite places \cite{Burnol}, the orbit-minimal norms define an adelic metric on $\OO_Y(m)$, so that $h_{\GIT}-h_I$ is bounded on the semistable locus; see also \cite{Zhang1996}. The upper bound for $h_{\GIT}-h_I$ obtained in this way is not explicit, and only the effective inequality (3) is used below. Replacing the maximum norm by another norm at finitely many places changes $h_{\GIT}$ by a bounded amount. Zhang's normalized height of semistable cycles uses the Chow line and a normalization involving degree and dimension \cite{Zhang1996,ShaskaShaska}; to compare it with $h_I$ one has to identify the two metrized line bundles on $Y$, after which Proposition~\ref{prop:metric} applies.
\end{remark}

\section{Binary sextics and the moduli of genus-two curves}\label{sec:sextics}

We now specialize to binary sextics, the case relevant to curves of genus two. The main tasks are to fix a normalization of the invariants once and for all, to compute the constants of Theorem~\ref{thm:gitgeneral}, and to record the facts about the quotient used later.

\subsection{Igusa--Clebsch invariants}
For binary forms $P,Q$ of degrees $m,n$ the normalized $r$-th transvectant is
\[
(P,Q)_r=\frac{(m-r)!\,(n-r)!}{m!\,n!}\sum_{j=0}^r(-1)^j\binom rj\frac{\partial^rP}{\partial X^{r-j}\partial Z^j}\frac{\partial^rQ}{\partial X^j\partial Z^{r-j}}.
\]
For a sextic $F=\sum_{i=0}^6a_iX^{6-i}Z^i$ put $\mathbf H=(F,F)_4$, $\mathbf A=(F,F)_6$, $\mathbf B=(\mathbf H,\mathbf H)_4$, $\mathbf C=(\mathbf H,(\mathbf H,\mathbf H)_2)_4$, and
\begin{equation}\label{eq:IC}
\begin{aligned}
I_2&=-120\mathbf A,& I_4&=-720\mathbf A^2+6750\mathbf B,\\
I_6&=8640\mathbf A^3-108000\mathbf A\mathbf B+202500\mathbf C,& I_{10}&=\Disc(F),
\end{aligned}
\end{equation}
where $\Disc(F)$ is the discriminant of the binary form, equal to $\Disc(F(x,1))$ when $a_0\ne0$. These are the Igusa--Clebsch invariants in the normalization of Mestre \cite{Mestre}, which is also the one used by Sage and Magma. Explicitly $I_2=-240a_0a_6+40a_1a_5-16a_2a_4+6a_3^2$. In terms of the roots $\alpha_1,\ldots,\alpha_6$ of $F(x,1)$, the invariant $I_2$ is $a_0^2\sum(\alpha_1-\alpha_2)^2(\alpha_3-\alpha_4)^2(\alpha_5-\alpha_6)^2$, summed over the fifteen partitions into pairs, and similarly for $I_4,I_6$ \cite{Igusa}; we checked that both descriptions agree on $f_0$ in Appendix~\ref{app:data}. The integral Igusa invariants are
\begin{gather*}
J_2=\tfrac18I_2,\qquad J_4=\tfrac1{96}(4J_2^2-I_4),\qquad J_6=\tfrac1{576}(8J_2^3-160J_2J_4-I_6),\\
J_8=\tfrac14(J_2J_6-J_4^2),\qquad J_{10}=\tfrac1{4096}I_{10}.
\end{gather*}

\begin{proposition}\label{prop:ICnorms}
The invariants $I_2,I_4,I_6,I_{10}$ are polynomials with integer coefficients in $a_0,\ldots,a_6$, with $4$, $16$, $56$ and $246$ monomials and
\[
\|I_2\|_1=302,\quad\|I_4\|_1=4192,\quad\|I_6\|_1=283842,\quad\|I_{10}\|_1=1561993.
\]
Their common zero locus in $\Sym^6$ over $\Qbar$ is the set of sextics with a root of multiplicity at least four.
\end{proposition}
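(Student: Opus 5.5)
The proposition has three parts: integrality, the monomial counts and $\ell^1$-norms, and the description of the common zero locus. I would prove the first two by direct expansion and the third by the Hilbert--Mumford criterion together with Igusa's description of the boundary.

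\emph{Integrality, counts and norms.} I expand the transvectant formulas \eqref{eq:IC} symbolically in $a_0,\ldots,a_6$. Each $(P,Q)_r$ is a polynomial with rational coefficients, so $I_2,I_4,I_6$ are a priori in $\Q[a_0,\ldots,a_6]$. Expanding them, together with $\Disc(F)$, written as the resultant-normalized discriminant of the binary form, exhibits them explicitly. Integrality and the numbers $4,16,56,246$ and $302,4192,283842,1561993$ are then read off from the expansion. For $I_2$ the displayed formula already gives $240+40+16+6=302$ with $4$ monomials. As an independent check on integrality, each $I_d$ is $a_0^{d/?}$ times a symmetric expression in root differences with integer coefficients. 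For example, $I_2=a_0^2\sum(\alpha_1-\alpha_2)^2(\alpha_3-\alpha_4)^2(\alpha_5-\alpha_6)^2$. Such an expression lies in $\Z[a_0,\ldots,a_6]$ by the fundamental theorem on symmetric functions, provided the $a_0$-degree matches. This step is a finite computation, certified in Appendix~\ref{app:data}.

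\emph{Zero locus, first inclusion.} Suppose $F$ has a root of multiplicity $\ge4$. Move that root to $X=0$ by $\SL_2$, so that $a_0=a_1=a_2=0$. The one-parameter subgroup $\mathrm{diag}(t,t^{-1})$ acts by $a_i\mapsto t^{6-2i}a_i$, so only the weights $6-2i\le0$ survive and every coefficient has weight $\le0$ with $a_3$ of weight $0$. For multiplicity $\ge4$ I take the root at $Z=0$ instead, which makes $a_i=0$ for $i\ge3$, so all surviving weights are positive. Then $F$ lies in the nullcone, and every homogeneous invariant of positive degree vanishes on it.

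\emph{Zero locus, reverse inclusion.} Suppose $F$ has no root of multiplicity $\ge4$; I want some $I_d(F)\neq0$. By Hilbert--Mumford, $F$ is then semistable, so some homogeneous invariant of positive degree is nonzero at $F$. The Igusa--Clebsch invariants $I_2,I_4,I_6,I_{10}$ together with one more invariant $I_{15}$, whose square is a polynomial in the others, generate the invariant ring. So some $I_d(F)$ with $d\in\{2,4,6,10,15\}$ is nonzero, and then $I_{15}^2$ nonzero forces one of $I_2,I_4,I_6,I_{10}$ to be nonzero. Citing the generation of the ring is the cleanest route, since it shows the four invariants cut out exactly the nullcone; this is Igusa's theorem \cite{Igusa}.

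If I prefer a direct check, I split into cases. If $F$ is squarefree, $I_{10}\ne0$. Otherwise $F$ has a root of multiplicity $2$ or $3$, and I normalize it to $X=0$. In the triple-root case I write $F=X^3(\text{cubic})$ and compute $I_2,I_4,I_6$ on the few remaining normal forms, for instance $X^3Z^3$ and $X^3Z^2(X+Z)$. These are handled by one-variable checks: $I_2(X^3Z^3)=6\ne0$. The double-root case admits a similar reduction.

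\emph{Main obstacle.} The main obstacle is the reverse inclusion, showing that strictly semistable sextics such as $X^3Z^3$ and $X^2Z^2(\cdots)$ are not common zeros of the four invariants. I would settle it by Igusa's theorem that $I_2,I_4,I_6,I_{10}$ define the GIT quotient.
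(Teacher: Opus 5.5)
Your proposal is correct and follows essentially the paper's proof. The integrality, monomial counts and norms come from direct expansion of \eqref{eq:IC}. The zero locus follows because $I_2,I_4,I_6,I_{10}$ generate the even invariant ring with $I_{15}^2$ a polynomial in them without constant term, so they cut out exactly the nullcone, i.e.\ the sextics with a root of multiplicity at least four.

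Your explicit one-parameter-subgroup argument only replaces the paper's citation of \cite{MumfordGIT} for the nullcone. Your labels there are swapped: $X^4\mid F$ gives $a_3=\cdots=a_6=0$, leaving weights $6,4,2>0$, which is the configuration your argument actually uses. The case-by-case alternative is not needed, and in the double-root case it would not reduce to finitely many normal forms.
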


\begin{proof}
The first part is a direct expansion of \eqref{eq:IC}. The even-degree invariant ring of binary sextics in characteristic zero is generated by $I_2,I_4,I_6,I_{10}$, and the square of the odd invariant $I_{15}$ is a polynomial in them without constant term \cite{Igusa,Ottaviani}. Hence the four invariants vanish simultaneously exactly on the nullcone, which for sextics consists of the forms with a root of multiplicity at least four \cite{MumfordGIT}.
\end{proof}

\subsection{Weighted versus GIT height for binary sextics}
Let $\SL_2$ act on sextics by $(gF)(X,Z)=F((X,Z)g)$. With $\|F\|_v$ the maximum of $|a_i|_v$, let $\rho_v$, $W_v$, $h_{\GIT}$ and $h_{\inv}=h_I$ be as in Section~\ref{sec:git}. We say that $F$ has \emph{good reduction} at a finite place $v$ if, after scaling $F$ to be primitive at $v$, its discriminant is a $v$-adic unit; when $v\nmid2$ this means that the curve $y^2=F(x,1)$ has good reduction at $v$. With these conventions Theorem~\ref{thm:B} is a direct consequence of Theorem~\ref{thm:gitgeneral} and Proposition~\ref{prop:ICnorms}.

\begin{proof}[Proof of Theorem~\ref{thm:B}]
By Proposition~\ref{prop:ICnorms} we may apply Theorem~\ref{thm:gitgeneral}, with $c(I)=\max\{\frac12\log302,\frac14\log4192,\frac16\log283842,\frac1{10}\log1561993\}=\frac12\log302$. This gives (1) and (3) for $g\in\SL_2(\Qbar)$; for $g\in\GL_2(\Qbar)$ write $g=\lambda g_1$ with $g_1\in\SL_2(\Qbar)$, so that $gF=\lambda^6g_1F$ has the same projective height. For (2), if $F$ is primitive at $v$ and has good reduction there, then $I_{10}(F)=\Disc(F)$ is a $v$-adic unit and Theorem~\ref{thm:gitgeneral}(5) applies.
\end{proof}

\begin{remark}\label{rem:bombieri}
The archimedean norm in $h_{\GIT}$ can be replaced by the $\mathrm U(2)$-invariant norm $\|F\|_B=(\sum_i|a_i|^2/\binom6i)^{1/2}$. Since $\|F\|_\infty\le\sqrt{20}\,\|F\|_B$ and $\|F\|_B\le\sqrt7\,\|F\|_\infty$, the corresponding height $h_{\GIT}^B$ satisfies $h_{\inv}\le h_{\GIT}^B+\frac12\log6040$.
\end{remark}

\subsection{The moduli quotient}
We end the section with the quotient itself and with an observation that explains the shape of the families in Section~\ref{sec:surface}. Let $k$ be a number field. The even-degree invariant ring gives
\begin{equation}\label{eq:Q}
\Xq=\Proj k[I_2,I_4,I_6,I_{10}]\simeq\PP(2,4,6,10),\qquad\Hq=\OO_{\Xq}(60),
\end{equation}
so that $60\,h_{\inv}=h_{\Hq}+O(1)$ on $\Xq(\Kbar)$. A sextic is semistable if it has no root of multiplicity greater than three and stable if it has no root of multiplicity at least three. The open subset $\mathcal M_2=\{I_{10}\ne0\}$ is the coarse moduli space of smooth genus-two curves \cite{Igusa}; for weighted moduli of sextics see also \cite{BeshajGuest}.

\begin{proposition}\label{prop:affine}
Every morphism from a connected projective variety to $\mathcal M_2$ is constant. Hence every nonconstant morphism from an abelian variety to $\Xq$ whose image meets $\mathcal M_2$ also meets the boundary $I_{10}=0$.
\end{proposition}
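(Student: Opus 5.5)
The plan is to show that $\mathcal M_2$ is affine and then use that a connected projective variety admits only constant morphisms to an affine variety. In \eqref{eq:Q}, $\mathcal M_2$ is the open subset $\{I_{10}\ne0\}$ of $\Xq=\Proj k[I_2,I_4,I_6,I_{10}]$. This is the basic open set $D_+(I_{10})$ of a Proj, so it equals $\Proj$ localized at a homogeneous element:
\[
\mathcal M_2\simeq\operatorname{Spec}\bigl(k[I_2,I_4,I_6,I_{10}]_{(I_{10})}\bigr),
\]
the spectrum of the degree-zero part of the localization. This ring is generated by the finitely many monomials $I_2^{a}I_4^{b}I_6^{c}/I_{10}^{e}$ of weighted degree zero, for instance $I_2^5/I_{10}$, $I_4^5/I_{10}^2$, $I_6^5/I_{10}^3$ and the mixed ones. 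Hence $\mathcal M_2$ is an affine variety of finite type, which recovers the classical Igusa description.

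Next, let $Z$ be a connected projective variety and $\psi\colon Z\to\mathcal M_2$ a morphism. Choose a closed embedding $\mathcal M_2\hookrightarrow\mathbf A^n$ using the generators above. Each coordinate function pulls back to a regular function on $Z$. Since $Z$ is proper and connected (and reduced, being a variety), $H^0(Z,\OO_Z)$ is a finite field extension of $k$. So each pulled-back coordinate is locally constant over $\Kbar$, hence constant on the connected $Z$, and therefore $\psi$ is constant. For a more hands-on version over $\Kbar$: the image of $\psi$ is closed, because $Z$ is proper, and it is connected. A complete connected subvariety of an affine variety is a point, since each coordinate restricted to it is a regular function on a complete variety.

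For the second statement, let $f\colon A\to\Xq$ be nonconstant with image meeting $\mathcal M_2$. Suppose the image avoided the boundary $\{I_{10}=0\}$. Then $f$ would factor through the open subscheme $\mathcal M_2$, since $f^{-1}(\mathcal M_2)=A$ set-theoretically and $A$ is reduced. An abelian variety is a connected projective variety, so the first part would force $f$ to be constant, a contradiction. Hence the image also meets $I_{10}=0$.

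The only step needing care is the identification $D_+(I_{10})\simeq\operatorname{Spec}$ of the degree-zero localization for a weighted, not standard graded, ring. This holds for any graded ring and any homogeneous element of positive degree, so I expect no real obstacle. No arithmetic input is needed.
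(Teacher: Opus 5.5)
Your proof is correct and takes the same route as the paper, which simply notes that $\mathcal M_2=D_+(I_{10})$ is affine and that global functions on a connected projective variety are constant. You add detail on identifying $D_+(I_{10})$ with the spectrum of the degree-zero localization and on factoring $f$ through the open subscheme; for the latter, reducedness of $A$ is not actually needed, since any morphism whose image lies in an open subscheme factors through it.
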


\begin{proof}
$\mathcal M_2=D_+(I_{10})$ is affine, and global functions on a connected projective variety are constant.
\end{proof}

The weighted height of a moduli point and the N\'eron--Tate height of a point of the corresponding Jacobian live on different spaces. They can be compared only after one fixes a nonconstant family over a parameter variety together with a section of its relative Jacobian; this is what the remaining sections do.

\section{A pencil with five fixed branch points}\label{sec:pencil}

Before turning to two-dimensional families we work out a one-parameter example in which every quantity can be computed by hand. Fix five of the six branch points and let the sixth move along the line. The resulting pencil has explicit invariants, a moduli map that pulls the polarization back to the smallest possible degree, and an exact place-by-place formula relating the moduli height to the height of the parameter.

For $(T:U)\in\PP^1$ consider
\begin{gather*}
F_{T,U}(X,Z)=X(X-Z)(X-2Z)(X-3Z)(UX-TZ)Z,\\
C_t:\ y^2=x(x-1)(x-2)(x-3)(x-t),
\end{gather*}
with $t=T/U$. At each of $t=0,1,2,3,\infty$ the form $F_{T,U}$ has exactly one double root and no triple root, so every member of the pencil is stable. Put
\[
Q_2=4T^2-12TU+15U^2,\qquad D_5=UT(T-U)(T-2U)(T-3U).
\]

\begin{proposition}\label{prop:pencil}
The Igusa--Clebsch invariants of $F_{T,U}$ are
\begin{align*}
I_2&=10\,Q_2,\qquad I_{10}=144\,D_5^2,\\
I_4&=4\,(13T^4-78T^3U+183T^2U^2-198TU^3+108U^4),\\
I_6&=120\,(5T^6-45T^5U+184T^4U^2-429T^3U^3\\
&\qquad+599T^2U^4-474TU^5+180U^6).
\end{align*}
The forms $Q_2$ and $D_5$ have no common zero over $\Qbar$, and none over $\overline\F_p$ for $p\notin\{2,3,5,7\}$. The curve $C_t$ is smooth exactly when $D_5(T,U)\ne0$.
\end{proposition}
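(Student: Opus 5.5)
The plan is to compute the invariants directly from the definitions in \eqref{eq:IC}, then handle the resultant and smoothness statements separately. Since $I_2,I_4,I_6,I_{10}$ are invariant polynomials in the coefficients $a_0,\dots,a_6$, I will first expand $F_{T,U}=X(X-Z)(X-2Z)(X-3Z)(UX-TZ)Z$. Writing $g(X,Z)=X(X-Z)(X-2Z)(X-3Z)=X^4-6X^3Z+11X^2Z^2-6XZ^3$, we get $F=Z\,g\,(UX-TZ)$. Hence $a_0=0$, and the remaining coefficients are linear forms in $(T,U)$:
\begin{equation*}
a_1=U,\quad a_2=-T-6U,\quad a_3=6T+11U,\quad a_4=-11T-6U,\quad a_5=6T,\quad a_6=0.
\end{equation*}
As a check on $I_2$, the explicit formula $I_2=-240a_0a_6+40a_1a_5-16a_2a_4+6a_3^2$ gives
\begin{equation*}
240TU-16(T+6U)(11T+6U)+6(6T+11U)^2=40T^2-120TU+150U^2=10Q_2.
\end{equation*}
For $I_4$ and $I_6$ I will substitute these coefficients into the expansions of Proposition~\ref{prop:ICnorms}. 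Because $a_0=a_6=0$, many monomials drop out, so the computation is mechanical. In practice I would rely on the certified computer algebra check from Appendix~\ref{app:data}. As an independent sanity check, the pair-partition description of \cite{Igusa} could be used, with one root at infinity.

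For $I_{10}$ I will use the discriminant of the binary form, which is the product of squared differences of the root pairs $(\alpha_i:\beta_i)$ with the leading factors absorbed. The roots are $0,1,2,3,\infty$ together with $(T:U)$. The five fixed roots contribute $\prod_{0\le i<j\le3}(j-i)^2=(1\cdot2\cdot3\cdot1\cdot2\cdot1)^2=144$; the root at infinity contributes factors of $1$ against the finite roots. The moving root contributes $T^2(T-U)^2(T-2U)^2(T-3U)^2$ against $0,1,2,3$ and $U^2$ against $\infty$. Altogether this gives $I_{10}=144D_5^2$.

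The smoothness claim follows directly: $C_t$ is smooth exactly when $F$ has distinct roots, that is, when $I_{10}\neq0$, which is equivalent to $D_5\neq0$.

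For the common zeros, I evaluate $Q_2$ at each root of $D_5$:
\begin{equation*}
Q_2(0,1)=15,\quad Q_2(1,1)=7,\quad Q_2(2,1)=7,\quad Q_2(3,1)=15,\quad Q_2(1,0)=4.
\end{equation*}
None of these vanish over $\Qbar$. Over $\overline{\F}_p$, the roots of $D_5$ reduce to the same five points, so a common zero can only occur for $p$ dividing $15\cdot7\cdot4$, that is, $p\in\{2,3,5,7\}$. One caveat: when $p\in\{2,3\}$ the points $0,1,2,3$ may collide, but those primes are excluded anyway.

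The main obstacle is the verification of the $I_4$ and $I_6$ formulas. This is routine but long, and I would delegate it to the certified script rather than carry it out by hand.
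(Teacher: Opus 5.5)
Your proposal is correct and follows essentially the same route as the paper: substitute the coefficient vector $(0,U,-(T+6U),6T+11U,-(11T+6U),6T,0)$ into \eqref{eq:IC}, with $I_4$ and $I_6$ left to mechanical computation. You then obtain $I_{10}=144D_5^2$ as the product of the squared $2\times2$ determinants of the six linear factors, and evaluate $Q_2$ at the five zeros of $D_5$ to get $4,15,7,7,15$, whose prime factors lie in $\{2,3,5,7\}$. Your hand verification of $I_2=10Q_2$ and the root-partition sanity check are harmless extras, the latter being a valid independent check of $I_4$ and $I_6$.
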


\begin{proof}
The invariants are obtained by substituting the coefficients
\[
\bigl(0,\ U,\ -(T+6U),\ 6T+11U,\ -(11T+6U),\ 6T,\ 0\bigr)
\]
into \eqref{eq:IC}. The discriminant is the product of the squared $2\times2$ determinants of the six linear factors, which equals $144D_5^2$. The zeros of $D_5$ are $(1:0),(0:1),(1:1),(2:1),(3:1)$, where $Q_2$ takes the values $4,15,7,7,15$; their prime factors lie in $\{2,3,5,7\}$.
\end{proof}

Let $g\colon\PP^1\to\Xq$ be the moduli map of the pencil and $g_{2,10}\colon\PP^1\to\PP(2,10)$, $(T:U)\mapsto(Q_2:D_5^2)$, the two-invariant projection along the pencil.

\begin{theorem}\label{thm:pullback}
$g$ is a nonconstant morphism with $g^*\Hq\simeq\OO_{\PP^1}(60)$, and $g_{2,10}^*\OO(10)\simeq\OO_{\PP^1}(10)$. Hence $h_{\inv}(g(t))=h(t)+O(1)$ and $h_{(2,10)}(g_{2,10}(t))=h(t)+O(1)$ on $\PP^1(\Qbar)$.
\end{theorem}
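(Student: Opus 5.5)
The moduli map $g$ is given in weighted coordinates by $(T:U)\mapsto(I_2:I_4:I_6:I_{10})$. The four forms of Proposition~\ref{prop:pencil} have degrees $2,4,6,10$ in $(T,U)$, matching the weights. The plan is to pass to the Veronese system of Section~\ref{sec:weighted} and read off the pullback degree, and then to compute heights via Corollary~\ref{cor:pullback}.

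\emph{Morphism.} The first step is to check that $g$ is a morphism, i.e.\ that the four invariants have no common zero on $\PP^1(\Qbar)$. Proposition~\ref{prop:pencil} already shows that $Q_2$ and $D_5$ have no common zero. Since $I_2=10Q_2$ and $I_{10}=144D_5^2$, the forms $I_2$ and $I_{10}$ alone have no common zero, so the weighted map is base-point free.

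\emph{Pullback degree.} Take $m=60$. Each coordinate of $\nu_{60}\circ g$ is a monomial $I^\alpha$ with $\sum w_i\alpha_i=60$, so it is a binary form of degree exactly $60$ in $(T,U)$. These forms have no common zero, because the pure powers $I_2^{30}$ and $I_{10}^6$ are among them. Hence $(\nu_{60}\circ g)^*\OO(1)\simeq\OO_{\PP^1}(60)$. Proposition~\ref{prop:monomial} identifies this with $g^*\Hq$.

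The same argument applies to $g_{2,10}$ with $m=10$. The monomials are $Q_2^5$ and $D_5^2$, both of degree $10$ and without common zero, so $g_{2,10}^*\OO(10)\simeq\OO_{\PP^1}(10)$.

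\emph{Nonconstancy.} For this it suffices to exhibit one nonconstant weighted ratio, for example $I_2^5/I_{10}=10^5Q_2^5/(144D_5^2)$. This ratio has poles at the zeros of $D_5$ but is finite where $Q_2$ vanishes, so it is nonconstant. Alternatively, a nonconstant map into $\mathcal M_2$ over the complete base would contradict Proposition~\ref{prop:affine}, and the boundary really is hit.

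\emph{Heights.} Corollary~\ref{cor:pullback} gives
\[
60\,h_{\inv}(g(t))=h_{\OO(60)}(t)+O(1)=60\,h(t)+O(1).
\]
Similarly,
\[
10\,h_{(2,10)}(g_{2,10}(t))=10\,h(t)+O(1).
\]
Dividing by $60$ and by $10$ respectively gives the stated height formulas.

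The only point needing care is that the degree of each monomial in $(T,U)$ is exactly the weighted degree. This holds because every coordinate is homogeneous of the correct degree. Base-point freeness is also needed, and it is supplied by the coprimality in Proposition~\ref{prop:pencil}. I expect no real obstacle. If an explicit constant is wanted, one can bound the local terms directly using the resultant of $Q_2$ and $D_5$ (supported at $2,3,5,7$), but that is not required for $O(1)$.
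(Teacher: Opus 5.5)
Your proof is correct and follows the paper's argument. You get base-point freeness from the coprimality of $I_2=10Q_2$ and $I_{10}=144D_5^2$, the weighted monomials of degree $60$ (resp.\ $10$) give $\OO(60)$ (resp.\ $\OO(10)$), Corollary~\ref{cor:pullback} gives the heights, and nonconstancy comes from $I_{10}$ vanishing at some but not all parameters, which is what your ratio $I_2^5/I_{10}$ detects. Your ``alternative'' via Proposition~\ref{prop:affine} does not by itself prove nonconstancy, since that proposition constrains nonconstant maps rather than producing them, but your main argument makes it unnecessary.
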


\begin{proof}
The coefficients of $F_{T,U}$ are linear forms, so an invariant of degree $d$ is a section of $\OO(d)$. By Proposition~\ref{prop:pencil} the sections $I_2$ and $I_{10}$ have no common zero, so the invariant sections are base-point free and the weighted monomials of degree $60$ (respectively $10$) generate $\OO(60)$ (respectively $\OO(10)$). The map $g$ is nonconstant because $I_{10}$ vanishes at some but not all parameters.
\end{proof}

Theorem~\ref{thm:pullback} compares the two heights only up to $O(1)$. For the two-invariant height the difference can be written down exactly, prime by prime. For primitive $(T,U)\in\Z^2$ put $H(T,U)=\log\max\{|T|,|U|\}$, $H_*(T,U)=h_{(2,10)}(Q_2:D_5^2)$,
\[
\epsilon_\infty=\log\frac{\max\{|Q_2|^{1/2},|D_5|^{1/5}\}}{\max\{|T|,|U|\}},\qquad\epsilon_p=-\log p\cdot\min\Bigl\{\frac{v_p(Q_2)}2,\frac{v_p(D_5)}5\Bigr\}.
\]

\begin{theorem}\label{thm:local}
For every primitive $(T,U)\in\Z^2$,
\begin{equation}\label{eq:exactlocal}
H_*(T,U)=H(T,U)+\epsilon_\infty(T,U)+\sum_{p\in\{2,3,5,7\}}\epsilon_p(T,U),
\end{equation}
with
\begin{gather*}
-\log2\le\epsilon_2\le0,\qquad-\tfrac12\log p\le\epsilon_p\le0\quad(p=3,5,7),\\
\tfrac12\log\tfrac85\le\epsilon_\infty\le\tfrac12\log31.
\end{gather*}
The bounds are sharp: $\epsilon_2=-\log2$ whenever $2^5\mid U$, $\epsilon_\infty=\frac12\log\frac85$ at $(T,U)=(5,2)$, and $\epsilon_\infty=\frac12\log31$ at $(T,U)=(1,-1)$.
\end{theorem}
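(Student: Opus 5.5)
The identity \eqref{eq:exactlocal} is a direct place-by-place evaluation of $h_{(2,10)}$ at the representative $(Q_2(T,U), D_5(T,U)^2)$ of the point. Its local term at $v$ is $\log\max\{|Q_2|_v^{1/2},|D_5|_v^{2/10}\}=\log\max\{|Q_2|_v^{1/2},|D_5|_v^{1/5}\}$.

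At the real place we write this as $H(T,U)+\epsilon_\infty$, which is just the definition of $\epsilon_\infty$. At a prime $p$ we use that $Q_2,D_5$ have integer coefficients and $(T,U)$ is primitive. Then the local term equals $-\log p\cdot\min\{v_p(Q_2)/2,v_p(D_5)/5\}=\epsilon_p$, and it is $\le0$. Summing over all places gives \eqref{eq:exactlocal}, since $\sum_p$ of $\log\max\{|T|_p,|U|_p\}$ vanishes by primitivity. It remains to show that $\epsilon_p=0$ for $p\notin\{2,3,5,7\}$ and to bound the four remaining $\epsilon_p$.

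For $p\notin\{2,3,5,7\}$, Proposition~\ref{prop:pencil} says $Q_2,D_5$ have no common zero over $\overline\F_p$. Primitivity of $(T,U)$ means its reduction is a point of $\PP^1(\F_p)$, so $p$ cannot divide both values, and the minimum is $0$.

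For $p\in\{2,3,5,7\}$ I will bound the minimum through $Q_2$ alone, using the resultant-type identity: the ideal $(Q_2,D_5)$ in $\Z[T,U]$ contains $N\cdot T^k$ and $N\cdot U^k$ for some $N$ supported on $\{2,3,5,7\}$. A cleaner route is to study $v_p(Q_2)$ directly.
\begin{itemize}
\item Mod $3$ and mod $5$, $Q_2\equiv 4T^2$ resp.\ $\equiv 4T^2-12TU$. For $p=3$, $v_3(Q_2)\ge1$ forces $3\mid T$, and then $Q_2=4T^2-12TU+15U^2$ has $v_3=1$ since $3\nmid U$. This gives $\min\le 1/2$.
\item For $p=5$, $5\mid Q_2$ forces $5\mid T$ or $T\equiv 3U$. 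In the second case $v_5(D_5)$ is governed by the other factors, so I check $v_5$ of $Q_2$ via completing the square, $Q_2=(2T-3U)^2+6U^2$: then $5\mid Q_2$ needs $(2T-3U)^2\equiv -6U^2\equiv 4U^2$. I bound $v_5(Q_2)\le1$ or $v_5(D_5)\le 2$ casewise.
\item For $p=7$, the discriminant of $Q_2$ is $144-240=-96$, so $Q_2$ has roots mod $7$ only where $-96\equiv 2$ is a square; it is. Near those roots $D_5$ has the simple factor $T-U$ or $T-2U$ with $v_7$ controlled.
\item For $p=2$, $Q_2\equiv U^2\pmod 2$, so $2\mid Q_2$ forces $2\mid U$. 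With $U=2U'$ and $T$ odd we get $Q_2=4(T^2-6TU'+15U'^2)$, whose cofactor is odd. So $v_2(Q_2)=2$ exactly, hence $\min\le1$ and $\epsilon_2\ge-\log2$. Equality holds when $v_2(D_5)\ge5$, i.e.\ $2^5\mid U$, since then $v_2(D_5)=v_2(U)$.
\end{itemize}
The same ``odd cofactor'' computation at $3$, $5$, $7$ gives $v_p(Q_2)\le1$ in each case, hence $\epsilon_p\ge-\frac12\log p$.

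For the archimedean bounds, $\epsilon_\infty$ is homogeneous of degree $0$, so it is a continuous function on the real circle $\max\{|T|,|U|\}=1$. I split into $|U|=1$, $|T|\le1$ and $|T|=1$, $|U|\le1$.
\begin{itemize}
\item Upper bound: $|Q_2|\le 4+12+15=31$ gives $|Q_2|^{1/2}\le\sqrt{31}$, attained at $(1,-1)$. Also $|D_5|^{1/5}\le(1\cdot2\cdot3\cdot4)^{1/5}<\sqrt{31}$.
\item Lower bound: minimize $\max\{Q_2^{1/2},|D_5|^{1/5}\}$. Since $Q_2=(2T-3U)^2+6U^2$, its minimum on $|T|=1$ is at $U=2/5$, giving $Q_2=4-24/5+12/5\cdot\ldots$. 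The exact value is $\min_{|U|\le1}(4-12U+15U^2)=4-36/15=8/5$ at $U=2/5$, i.e.\ $(T,U)=(5,2)$ after scaling. On $|U|=1$, $Q_2\ge 15-9=6$.
\end{itemize}
So $\max\ge Q_2^{1/2}\ge\sqrt{8/5}$, with equality at $(5,2)$ provided $|D_5|^{1/5}\le\sqrt{8/5}$ there. I will verify this numerically: $D_5(5,2)=2\cdot5\cdot3\cdot1\cdot(-1)$, scaled by $5^{-5}$, which is tiny.

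The main obstacle is the $p=5$ and $p=7$ analysis, where $Q_2$ can vanish mod $p$ with $p\nmid T,U$. There I must show $v_p(Q_2)\le1$ by the odd-cofactor or Hensel argument, or else that $v_p(D_5)$ is small. I would first try to verify $v_p(Q_2)\le1$ for all primitive pairs by reducing mod $p^2$, a finite check over $(\Z/p^2)^2$.
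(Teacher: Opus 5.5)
Your overall plan matches the paper's: the place-by-place identity, the vanishing of $\epsilon_p$ off $\{2,3,5,7\}$ via Proposition~\ref{prop:pencil}, and the archimedean bounds from $Q_2=(2T-3U)^2+6U^2$ on the square together with $|D_5|\le24$. Those parts, and the case $p=3$, are fine. There is, however, a genuine gap at $p=5$ and $p=7$, where the lower bound $\epsilon_p\ge-\frac12\log p$ has to be proved.

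Your claim that $v_p(Q_2)\le1$ for every primitive pair is false. The discriminant $-96$ of $Q_2$ is a nonzero square modulo $5$ and modulo $7$, as you yourself note for $7$. Hence the simple roots of $Q_2$ modulo $p$ ($T\equiv0,3U$ mod $5$; $T\equiv U,2U$ mod $7$) lift by Hensel's lemma to $p$-adic roots, and $v_p(Q_2)$ is unbounded on primitive pairs. Concretely, $Q_2(-5,1)=175=5^2\cdot7$ and $Q_2(15,1)=735=3\cdot5\cdot7^2$. So the finite check modulo $p^2$ that you propose would return counterexamples.

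The bound survives only because $D_5$ takes over in exactly these cases, and this is the missing idea. The roots of $Q_2$ modulo $p$ are also roots of $D_5$, so you should split on $v_p(T-aU)$ for the relevant integer root $a\in\{0,3\}$ or $\{1,2\}$.
\begin{itemize}
\item If $v_p(T-aU)=1$, then $v_p(D_5)=1$. The other four linear factors of $D_5$ are units, because $\infty,0,1,2,3$ are distinct modulo $p$ and $p\nmid U$. Hence $\min\{v_p(Q_2)/2,\,v_p(D_5)/5\}\le1/5$.
\item If $v_p(T-aU)\ge2$, then $Q_2\equiv U^2Q_2(a,1)\pmod{p^2}$, with $Q_2(a,1)\in\{15,7\}$ of $p$-valuation exactly one. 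So $v_p(Q_2)=1$ and the minimum is at most $1/2$.
\end{itemize}
Your examples fit this: $D_5(-5,1)=1680$ has $v_5=1$, and $D_5(15,1)=32760$ has $v_7=1$. Equivalently, a correct finite certificate is that $Q_2$ and $D_5$ have no common primitive zero modulo $p^2$, not that $Q_2$ alone has none.

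There is also a smaller slip at $p=2$. With $U=2U'$ and $T$ odd, the cofactor $T^2-6TU'+15U'^2$ is odd only when $U'$ is even. If $v_2(U)=1$, the cofactor is $\equiv2\pmod4$ and $v_2(Q_2)=3$, so $v_2(Q_2)/2=3/2$ does not give $\min\le1$. In that case you must use $v_2(D_5)=v_2(U)=1$, which gives minimum $1/5$. The equality case $2^5\mid U$ is unaffected, since then $v_2(U)\ge2$ and $v_2(Q_2)=2$ exactly, as in the paper's proof.
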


\begin{proof}
At a prime $p$ the local weighted norm is $\max\{|Q_2|_p^{1/2},|D_5|_p^{1/5}\}$, whose logarithm is $\epsilon_p$; by Proposition~\ref{prop:pencil} it is $0$ for $p\notin\{2,3,5,7\}$. For $p=2$, $Q_2\equiv U^2\pmod2$, so a correction requires $U$ even and $T$ odd; then $v_2(D_5)=v_2(U)$, while $v_2(Q_2)=3$ if $v_2(U)=1$ and $v_2(Q_2)=2$ if $v_2(U)\ge2$, so the minimum is at most $1$, with equality when $v_2(U)\ge5$. For $p=3$, $Q_2\equiv T^2\pmod3$ forces $3\mid T$, $3\nmid U$, and then $v_3(Q_2)=1$. For $p=5$ the common zeros modulo $5$ are $T\equiv0$ and $T\equiv3U$, and for $p=7$ they are $T\equiv U$ and $T\equiv2U$. If $a$ is the relevant root and $v_p(T-aU)=1$ then $v_p(D_5)=1$; if $v_p(T-aU)\ge2$ then $Q_2\equiv Q_2(a,1)U^2\pmod{p^2}$ with $Q_2(a,1)\in\{15,7\}$ of valuation one. At the real place $Q_2=(2T-3U)^2+6U^2$; on the square $\max\{|T|,|U|\}=1$ its minimum is $\frac85$, at $(1,\frac25)$, and its maximum is $31$, while $|D_5|\le24$ and $24^{1/5}<\sqrt{31}$.
\end{proof}

\begin{remark}\label{rem:fullIC}
For the full Igusa--Clebsch point, $h_{\inv}(g(t))-H(T,U)$ is a sum of an archimedean term and local terms at $2,3,5,7$ only: at any other prime either $I_2=10Q_2$ or $I_{10}=144D_5^2$ is a unit, and all four invariants are integral.
\end{remark}

\subsection*{An elliptic parameter}
Composing the pencil with the $x$-coordinate of an elliptic curve turns it into a family over an abelian variety, to which the results of Section~\ref{sec:abelian} apply. Let $x\colon E\to\PP^1$ be the $x$-coordinate of $E:y^2=x^3-2$, and $f=g\circ x$, $f_*=g_{2,10}\circ x$.

\begin{theorem}\label{thm:ell}
$f^*\Hq\simeq\OO_E(120O)$ and $f_*^*\OO(10)\simeq\OO_E(20O)$. For $R\in E(\Qbar)$,
\[
h_{\inv}(f(R))=2\hhat_E(R)+O(1),\qquad H_*(x(R))=2\hhat_E(R)+O(1),
\]
and the weighted N\'eron--Tate heights of $f$ and $f_*$ both equal $2\hhat_E$. Consequently
\[
\#\{n\in\Z:\ h_{\inv}(f([n]P_0))\le X\}=\frac{2}{\sqrt{2\hhat_E(P_0)}}\sqrt X+O(1).
\]
\end{theorem}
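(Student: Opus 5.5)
The plan is to compose the pullback computation of Theorem~\ref{thm:pullback} with the $x$-coordinate and then apply the machinery of Section~\ref{sec:abelian}. First I identify the line bundles. The map $x\colon E\to\PP^1$ has $x^*\OO_{\PP^1}(1)\simeq\OO_E(2O)$, since the polar divisor of $x$ is $2(O)$. Theorem~\ref{thm:pullback} gives $g^*\Hq\simeq\OO_{\PP^1}(60)$ and $g_{2,10}^*\OO(10)\simeq\OO_{\PP^1}(10)$. Hence
\[
f^*\Hq\simeq x^*\OO_{\PP^1}(60)\simeq\OO_E(120O),\qquad f_*^*\OO(10)\simeq\OO_E(20O).
\]
Both maps are finite: $x$ is finite of degree two, and $g$ and $g_{2,10}$ are nonconstant maps from $\PP^1$, hence finite onto their images.

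Next come the heights. The bundle $\OO_E(O)$ is symmetric, so $N=f^*\Hq$ has $N^-=0$ and $N^+=\OO_E(120O)$. Theorem~\ref{thm:wnt} (with $m=60$), or Proposition~\ref{prop:quotient}, then gives
\[
h_{\inv}\circ f=\tfrac1{60}\hhat_{\OO_E(120O)}+O(1)=2\hhat_E+O(1).
\]
Here I use that $\hhat_{\OO_E(O)}=\hhat_E$ in the paper's normalization, since $\hhat_E=\tfrac12\lim 4^{-j}h(x([2^j]P))$ and $h\circ x$ is a height for $\OO_E(2O)$. The same argument with $m=10$ gives $H_*(x(R))=h_{(2,10)}(f_*(R))=\tfrac1{10}\hhat_{\OO_E(20O)}(R)+O(1)=2\hhat_E(R)+O(1)$.

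One can also argue directly, without Theorem~\ref{thm:wnt}. Theorem~\ref{thm:pullback} says $h_{\inv}(g(t))=h(t)+O(1)$, and $h(x(R))=2\hhat_E(R)+O(1)$ is the standard comparison. Either route identifies both weighted N\'eron--Tate heights as $2\hhat_E$, by taking the limit $4^{-j}(\cdot)\circ[2^j]$.

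For the count I apply Theorem~\ref{thm:count} with $\Gamma=\langle P_0\rangle$. Example~\ref{ex:two} gives $r=1$ and $T$ trivial. The Gram matrix is $G=(2\hhat_E(P_0))$, and $V_1=2$, so the count is $\frac{2}{\sqrt{2\hhat_E(P_0)}}\sqrt X+O(1)$.

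One could instead argue directly: $h_{\inv}(f([n]P_0))=2n^2\hhat_E(P_0)+O(1)$, because the linear term vanishes (alternatively, by Theorem~\ref{thm:symmetric}, since $x\circ[-1]=x$ makes $f\circ[-1]=f$). The number of integers $n$ with $2n^2\hhat_E(P_0)\le X\pm C$ is then $2\sqrt{X/(2\hhat_E(P_0))}+O(1)$.

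The only step that needs any care is the normalization check that $\hhat_{\OO_E(O)}$ agrees with the paper's $\hhat_E$. Everything else is a formal combination of earlier results, so I do not expect a real obstacle.
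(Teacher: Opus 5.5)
Your proposal is correct and follows the paper's proof. Like the paper, you pull back $\OO(1)$ along $x$ to get $\OO_E(2O)$, compose with Theorem~\ref{thm:pullback}, use the symmetry of the pullback bundles to remove the linear term via Theorems~\ref{thm:decomposition} and~\ref{thm:wnt}, and read off the count from Theorem~\ref{thm:count} with $r=1$, $|T|=1$, $V_1=2$; your explicit checks that $\hhat_{\OO_E(O)}=\hhat_E$ and that $f$, $f_*$ are finite fill in details the paper leaves implicit.
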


\begin{proof}
Since $x^*\OO(1)=\OO_E(2O)$, the pullbacks follow from Theorem~\ref{thm:pullback}, and $h(x(R))=2\hhat_E(R)+O(1)$. The pullback bundles are symmetric, so Theorems~\ref{thm:decomposition} and~\ref{thm:wnt} give the weighted N\'eron--Tate heights, and the count follows as in Theorem~\ref{thm:count}.
\end{proof}

\section{Families of binary sextics over \texorpdfstring{$E\times E$}{E x E}}\label{sec:surface}

The pencil of Section~\ref{sec:pencil} has a rational base, and its parameter space has no group structure of its own. To compare heights along subgroups we need a parameter space that is itself an abelian variety. By Proposition~\ref{prop:affine} such a family must degenerate, so the best one can hope for is a morphism to the compactified quotient $\Xq$ whose image meets $\mathcal M_2$. We show first that such families exist in abundance, and then construct two explicit ones.

Let $A=E\times E$ and $L=p_1^*\OO_E(3O)\otimes p_2^*\OO_E(3O)$. The bundle $L$ is symmetric and ample, $h^0(A,L)=9$, and $h_L(P_1,P_2)=3\hhat_E(P_1)+3\hhat_E(P_2)+O(1)$.

\subsection{An existence theorem}

\begin{lemma}\label{lem:codim}
The sextics with a root of multiplicity at least four form a closed subset of $\PP(\Sym^6)\simeq\PP^6$ of dimension three.
\end{lemma}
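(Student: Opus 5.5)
The plan is to parametrize the locus in question explicitly and count dimensions. A binary sextic $F$ over $\Qbar$ has a root of multiplicity at least four exactly when $F=\ell^4q$ for some nonzero linear form $\ell$ and some binary quadratic form $q$ (possibly zero-free, possibly divisible by $\ell$). So I will consider the multiplication morphism
\[
\mu\colon\PP^1\times\PP(\Sym^2)\simeq\PP^1\times\PP^2\longrightarrow\PP(\Sym^6)\simeq\PP^6,\qquad([\ell],[q])\longmapsto[\ell^4q].
\]
This is a morphism, since the product of nonzero forms is nonzero. Its image is precisely the set of sextics with a root of multiplicity $\ge4$. Because the source is projective, the image is closed, and it is irreducible, being the image of an irreducible variety. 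Alternatively, closedness follows from Proposition~\ref{prop:ICnorms}, which identifies this set with the common zero locus of $I_2,I_4,I_6,I_{10}$.

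The dimension of the image is at most $\dim(\PP^1\times\PP^2)=3$. For equality I will show that $\mu$ is finite, in fact quasi-finite with finite fibres. Given $F=\ell^4q$, unique factorization in $\Qbar[X,Z]$ shows that the linear factor $\ell$ must be one of the at most two roots of multiplicity $\ge4$ of $F$. Indeed, a sextic cannot have two distinct roots each of multiplicity $\ge4$, since that would require degree at least $8$. So $\ell$ is the unique such root up to scalar. Once $\ell$ is fixed, $q=F/\ell^4$ is determined up to scalar. Hence $\mu$ is injective on $\Qbar$-points.

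A morphism of varieties with finite fibres preserves dimension. Therefore the image has dimension exactly $3$. As a concrete check, the tangent directions at a generic point $[X^4(X-Z)(X+Z)]$ span a three-dimensional space: one direction from moving the quadruple root and two from varying $q$.

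There is no real obstacle in this argument. The only point needing care is the fibre count, that is, the uniqueness of the quadruple root, and this follows from the degree bound $4+4>6$.
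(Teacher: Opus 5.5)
Your proposal is correct and follows the paper's proof. The paper also realizes the locus as the image of $\PP^1\times\PP^2\to\PP^6$, $(\ell,q)\mapsto\ell^4q$, and uses finite fibres to get dimension three; your remark that $4+4>6$ forces uniqueness of the quadruple root, so the map is injective on $\Qbar$-points, just makes that fibre statement explicit.
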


\begin{proof}
It is the image of $\PP^1\times\PP^2\to\PP^6$, $(\ell,q)\mapsto\ell^4q$, which has finite general fibres.
\end{proof}

\begin{theorem}\label{thm:surface}
There is a nonempty Zariski-open subset $\mathcal U$ of $H^0(A,L)^{\oplus7}\simeq\mathbf A^{63}_\Q$ with the following property. For $\boldsymbol s=(s_0,\ldots,s_6)\in\mathcal U(\Q)$ every sextic $F_R=\sum_js_j(R)X^{6-j}Z^j$ with $R\in A$ is semistable, the general one is smooth, and the moduli morphism $\Phi_{\boldsymbol s}\colon A\to\Xq$ is finite onto a surface with $\Phi_{\boldsymbol s}^*\Hq\simeq L^{\otimes60}$. Moreover
\[
h_{\inv}(\Phi_{\boldsymbol s}(R))=3\hhat_E(p_1R)+3\hhat_E(p_2R)+O_{\boldsymbol s}(1).
\]
\end{theorem}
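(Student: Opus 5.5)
The plan is to show that each of the required properties is Zariski-open in $\boldsymbol s\in H^0(A,L)^{\oplus7}$ and nonempty, and then intersect the finitely many resulting open sets. We then deduce the height formula from Corollary~\ref{cor:pullback}.

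\emph{Semistability everywhere.} Consider the map $\Psi_{\boldsymbol s}\colon A\dashrightarrow\PP^6$, $R\mapsto(s_0(R):\cdots:s_6(R))$. We need two things: the $s_j$ have no common zero on $A$, and the image misses the nullcone $\mathcal N$ of Lemma~\ref{lem:codim}, which has dimension $3$ in $\PP^6$. We argue by an incidence dimension count. Since $L$ is very ample ($3O$ on each factor), $H^0(A,L)$ separates points, so for each $R\in A$ the evaluation map $H^0(A,L)^{\oplus7}\to L_R^{\oplus7}$ is surjective. Take the incidence variety
\[
\{(\boldsymbol s,R):\boldsymbol s(R)=0\ \text{or}\ [\boldsymbol s(R)]\in\mathcal N\}.
\]
Over each $R$ it is the cone over $\mathcal N$ together with $0$, of codimension $7-4=3$ in $L_R^{\oplus7}$. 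Hence the incidence variety has codimension $3$ in $\mathbf A^{63}\times A$, and its dimension is $63+2-3<63$. Its image in $\mathbf A^{63}$ is therefore a proper closed subset, closed because $A$ is proper. On the complement, $\Psi_{\boldsymbol s}$ is a morphism landing in the semistable locus. Composing with the invariants then gives $\Phi_{\boldsymbol s}$. Each $I_d(F_R)$ is a section of $L^{\otimes d}$, so the degree-$60$ weighted monomials yield $\Phi_{\boldsymbol s}^*\Hq\simeq L^{\otimes60}$, base-point free by semistability, exactly as in the proof of Theorem~\ref{thm:pullback}.

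\emph{Finiteness and generic smoothness.} Smoothness of the general member is the open condition $I_{10}(F_R)\not\equiv0$; it is nonempty because some $\boldsymbol s$ has a smooth value at one point, again by surjectivity of evaluation. For finiteness, note that $\Phi_{\boldsymbol s}^*\Hq=L^{\otimes60}$ is ample, so $\Phi_{\boldsymbol s}$ contracts no curve: a contracted curve would have degree zero against an ample bundle. A proper morphism with finite fibres is finite, so $\Phi_{\boldsymbol s}$ is finite, and its image is then a surface. Finiteness is therefore automatic, with no extra genericity needed.

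\emph{Height formula.} By Corollary~\ref{cor:pullback}, $60\,h_{\inv}(\Phi_{\boldsymbol s}(R))=h_{L^{\otimes60}}(R)+O(1)$. Using $h_L=3\hhat_E\circ p_1+3\hhat_E\circ p_2+O(1)$ and dividing by $60$ gives the stated formula. There is no linear term, since $L$ is symmetric.

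The main obstacle is the uniformity in the first step. Semistability must hold at \emph{every} $R$ rather than generically, and this needs the fibre codimension $3$ to exceed $\dim A=2$. The count relies on the surjectivity of evaluation at each point, which holds because $L$ is globally generated. The count is tight but sufficient, so I would verify the dimension of $\mathcal N$ carefully.
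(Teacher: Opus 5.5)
Your proposal is correct and follows the paper's proof essentially step for step: the same incidence-variety dimension count $63+2-3=62$ based on surjectivity of evaluation at every point, smoothness of the general member as a nonempty open condition, finiteness because the ample bundle $\Phi_{\boldsymbol s}^*\Hq\simeq L^{\otimes60}$ forbids contracted curves, and the height formula from Corollary~\ref{cor:pullback} together with $h_L=3\hhat_E\circ p_1+3\hhat_E\circ p_2+O(1)$. Two small touch-ups: surjectivity of evaluation at $R$ follows from global generation of $L$ (which you invoke later) rather than from point separation, and, as the paper does, you should note that $\mathcal U(\Q)\ne\emptyset$ because $\Q$ is infinite.
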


\begin{proof}
Let $W=H^0(A,L)^{\oplus7}$. As $L$ is globally generated, evaluation at any $R$ is surjective onto $L_R^{\oplus7}$, so the tuples whose sextic at $R$ is unstable or zero form a subset of codimension three by Lemma~\ref{lem:codim}. The incidence variety of pairs $(R,\boldsymbol s)$ with $F_R$ unstable or zero has dimension at most $63+2-3=62$, so its image in $W$ is a proper closed subset. The tuples with $\Disc(F_{R_0})\ne0$ for a fixed $R_0$ form a nonempty open set, and $\mathcal U$ is the intersection; it has rational points because $\Q$ is infinite. For $\boldsymbol s\in\mathcal U$ the invariant $I_d(F_R)$ is a section of $L^{\otimes d}$, the invariant sections have no common zero, and the weighted monomials of degree $60$ identify $\Phi_{\boldsymbol s}^*\Hq$ with $L^{\otimes60}$. Ampleness of the pullback rules out contracted curves, so $\Phi_{\boldsymbol s}$ is finite. Since $L$ is symmetric, Theorem~\ref{thm:decomposition} has no linear term, and $h_{\inv}\circ\Phi_{\boldsymbol s}=\frac1{60}h_{L^{\otimes60}}+O(1)=h_L+O(1)$.
\end{proof}

\begin{corollary}\label{cor:ranktwo}
For $\boldsymbol s\in\mathcal U(\Q)$,
\[
\#\{(n,m)\in\Z^2:\ h_{\inv}(\Phi_{\boldsymbol s}([n]P_0,[m]P_0))\le X\}=\frac{\pi}{3\hhat_E(P_0)}X+O_{\boldsymbol s}(\sqrt X),
\]
and the same asymptotic holds after restricting to smooth fibres. The number of distinct smooth genus-two moduli points obtained is of exact order $X$.
\end{corollary}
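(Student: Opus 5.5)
The count follows by applying Theorem~\ref{thm:count} to the finite morphism $\Phi_{\boldsymbol s}$ and the rank-two subgroup $\Gamma=\Z P_0\times\Z P_0\subseteq A(\Q)$. By Theorem~\ref{thm:surface} and Example~\ref{ex:two}, $\Gamma$ is torsion-free with basis $(P_0,O),(O,P_0)$, since $E(\Q)_{\tors}$ is trivial. The pullback $\Phi_{\boldsymbol s}^*\Hq\simeq L^{\otimes60}$ is symmetric, so the linear term vanishes. Theorem~\ref{thm:wnt} then gives $\hhat_{\Phi_{\boldsymbol s},\ww}=\frac1{60}\hhat_{L^{\otimes 60}}=\hhat_L$, and $\hhat_L(P_1,P_2)=3\hhat_E(P_1)+3\hhat_E(P_2)$. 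On $(n,m)$ this is the diagonal form $3\hhat_E(P_0)(n^2+m^2)$, with Gram matrix $G_\Gamma=3\hhat_E(P_0)\,\mathrm{Id}_2$. Hence $\det G_\Gamma=9\hhat_E(P_0)^2$ and $V_2=\pi$, and \eqref{eq:count} yields $\pi X/(3\hhat_E(P_0))+O(\sqrt X)$. Here I would just check that $h_{\inv}(\Phi_{\boldsymbol s}(R))=\hhat_L(R)+O(1)$ holds uniformly, which Theorem~\ref{thm:surface} already records.

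For smooth fibres, let $D=\{R\in A:\Disc(F_R)=0\}$. It is the zero divisor of the section $I_{10}(F_R)$ of $L^{\otimes10}$, which is nonzero because the general fibre is smooth. I will show that $D\cap\Gamma$ contributes $O(\sqrt X)$ points. Each component of $D$ is a curve on $E\times E$. By Faltings' theorem on subvarieties of abelian varieties, $D\cap\Gamma$ is a finite union of cosets of subgroups of rank at most one. Each such coset meets the ellipsoid $\{3\hhat_E(P_0)(n^2+m^2)\le X+C\}$ in $O(\sqrt X)$ points, because a coset of rank one is an arithmetic progression along a line in $\Z^2$. The bounded number of cosets keeps the total at $O(\sqrt X)$, so removing them does not change the main term. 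I expect this step to be the main obstacle. To avoid Faltings I would first try an elementary argument: a translated elliptic curve in $E\times E$ meets $\Gamma$ in a rank-one coset, and the remaining components meet $\Gamma$ in finitely many points. That reduction requires knowing the components of $D$, which I do not have, so I would fall back on Faltings.

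For the number of distinct moduli points, the upper bound $\ll X$ is immediate from the count. For the lower bound, $\Phi_{\boldsymbol s}$ is finite, so each moduli point has at most $\deg\Phi_{\boldsymbol s}$ preimages in $A(\Qbar)$, a number independent of $X$. The smooth parameters with $h_{\inv}\le X$ number $\gg X$ by the previous step, so the distinct smooth moduli points also number $\gg X$. Together these give exact order $X$.
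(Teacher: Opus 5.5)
Your proposal is correct and follows the paper's proof essentially step for step: the lattice-point count in the disks $3\hhat_E(P_0)(n^2+m^2)\le X+O(1)$, Faltings' theorem to show that the discriminant curve meets $\Gamma=\Z P_0\times\Z P_0$ in $O(\sqrt X)$ points, and the bounded fibre cardinality of the finite map $\Phi_{\boldsymbol s}$ for the count of distinct moduli points. You assert without proof that the Faltings cosets have rank at most one; the paper justifies this by noting that $\Gamma$ is Zariski dense in $A$, because $P_0$ has infinite order, while the discriminant locus is a curve.
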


\begin{proof}
The main term comes from lattice points in the disks $3\hhat_E(P_0)(n^2+m^2)\le X+O(1)$. The discriminant divisor $D_{\rm sing}\subset A$ is a proper closed curve, so by Faltings' theorem \cite{Faltings1991,Faltings1994} its intersection with $\Gamma=\Z P_0\times\Z P_0$ is a finite union of cosets of subgroups of rank at most one, contributing $O(\sqrt X)$ points. The subgroups have rank at most one because $\Gamma$ is Zariski dense in $A$, since $P_0$ has infinite order, while $D_{\rm sing}$ is a curve. Since $\Phi_{\boldsymbol s}$ is finite, its geometric fibres have bounded cardinality.
\end{proof}

\subsection{A base-point-free net and the parameter plane}
Theorem~\ref{thm:surface} is an existence statement and does not produce a single explicit family. We now build two, starting from a net of sections of $L$. Write $E:Y^2Z=X^3-2Z^3$, $O=(0:1:0)$, and use coordinates $(X_i:Y_i:Z_i)$ on the two factors. Define sections of $L$ by
\begin{equation}\label{eq:abc}
a=Y_1Y_2,\qquad b=Z_1X_2+X_1Z_2,\qquad c=Z_1Z_2+X_1Y_2+Y_1X_2.
\end{equation}

\begin{lemma}\label{lem:abc}
The sections $a,b,c$ have no common zero on $A_{\Qbar}$.
\end{lemma}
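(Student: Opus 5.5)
We argue by contradiction and split into cases according to whether the points lie on the affine chart $Z_i\ne0$ or at $O$. Suppose $R=((X_1:Y_1:Z_1),(X_2:Y_2:Z_2))$ is a common zero of $a,b,c$. Since $a=Y_1Y_2=0$, one of $Y_1,Y_2$ vanishes. On $E:Y^2Z=X^3-2Z^3$, the points with $Y=0$ are the three affine points $(\theta:0:1)$ with $\theta^3=2$; in particular $Y=0$ forces $Z=1$ after scaling. Also $O=(0:1:0)$ is the only point with $Z=0$. The sections are symmetric under swapping the two factors, so we may assume $Y_1=0$, $Z_1=1$, $X_1=\theta$ with $\theta^3=2$.

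With $P_1=(\theta:0:1)$ the remaining two equations become
\[
b=X_2+\theta Z_2=0,\qquad c=Z_2+\theta Y_2=0 .
\]
If $Z_2=0$, then $P_2=O=(0:1:0)$, and $c=\theta\cdot1\ne0$, a contradiction. So $Z_2=1$. Then $X_2=-\theta$ and $Y_2=-1/\theta$.

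It remains to test whether this point lies on $E$. We have $Y_2^2=\theta^{-2}$, while $X_2^3-2=-\theta^3-2=-4$. Equality would require $\theta^{-2}=-4$, hence $\theta^2=-1/4$ and $\theta^6=-1/64$. But $\theta^6=4$, a contradiction. Hence there is no common zero.

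We should double-check that the case $Y_2=0$ is really covered by the symmetry. Interchanging the factors sends $(X_1,Y_1,Z_1)\leftrightarrow(X_2,Y_2,Z_2)$. Under this swap, $a$ and $b$ are visibly symmetric, and $c=Z_1Z_2+X_1Y_2+Y_1X_2$ is symmetric as well. So the reduction to $Y_1=0$ is legitimate.

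The only potential obstacle is bookkeeping at the point at infinity, where we rely on $Y=0$ and $Z=0$ never occurring together on $E$. The cubic computation itself is routine, and the argument works over $\Qbar$. Equivalently, it works over any field of characteristic not $2$ or $3$ in which $4\ne-1/16\cdot(\ldots)$, i.e. for which the final contradiction $\theta^6=4\ne-1/64$ persists. That condition fails only if $257=4\cdot64+1$ vanishes, which explains a possible exceptional prime $257$ for reductions; this is not needed here.
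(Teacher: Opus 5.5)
Your proof is correct and is essentially the paper's argument: the decisive step is the same contradiction $\theta^6=-\tfrac{1}{64}$ versus $\theta^6=4$. The only difference is bookkeeping. You use $a=Y_1Y_2=0$ first to force one factor to be $(\theta:0:1)$, so $O$ has to be excluded only for the other factor (via $c=\theta\ne0$), whereas the paper first disposes of $(O,O)$ and of the case where exactly one factor is $O$, using $(a:b:c)=(Y_2:0:X_2)$ there.
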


\begin{proof}
At $(O,O)$ we have $a=1$. If exactly one factor is $O$, say the first, then $(a,b,c)$ is proportional to $(Y_2,0,X_2)$, and $X_2=Y_2=0$ is impossible on $E$ when $2$ is invertible. In the affine chart a common zero satisfies $y_1y_2=0$, $x_2=-x_1$ and $1+x_1y_2+y_1x_2=0$. If $y_1=0$ then $x_1^3=2$, $y_2^2=x_2^3-2=-4$ and $x_1y_2=-1$, whence $x_1^2=-\frac14$ and $x_1^6=-\frac1{64}$, contradicting $x_1^6=4$. The case $y_2=0$ is symmetric.
\end{proof}

The same computation shows that $a,b,c$ have no common zero on the reduction of $A$ modulo any prime $p\notin\{2,3,257\}$, the only new relation in characteristic $p$ being $256=-1$; we shall not need this.

\begin{proposition}\label{prop:net}
The map $\phi=(a:b:c)\colon A\to\PP^2$ is a finite surjective morphism of degree $18$ with $\phi^*\OO(1)\simeq L$ and $\phi(O,O)=e=(1:0:0)$. It is invariant under the involution $s(P_1,P_2)=(P_2,P_1)$.
\end{proposition}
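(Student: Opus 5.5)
By Lemma~\ref{lem:abc} the three sections $a,b,c$ of $L$ have no common zero. I would therefore first note that $\phi=(a:b:c)$ is a morphism $A\to\PP^2$ with $\phi^*\OO(1)\simeq L$, since a morphism defined by base-point-free sections of a line bundle pulls $\OO(1)$ back to that bundle.

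For finiteness I would use the ampleness of $L$. If $\phi$ contracted a curve $\Gamma\subset A$, then $L\cdot\Gamma=\deg\OO(1)|_{\phi(\Gamma)}=0$, which contradicts ampleness. So $\phi$ is quasi-finite, and being projective it is finite. A finite morphism from a surface onto $\PP^2$ is surjective because its image is closed of dimension two. The degree then comes from the projection formula:
\[
\deg\phi\cdot(\OO(1)^2)=(L^2)=2\cdot3\cdot3=18,
\]
where $(L^2)=2\,(p_1^*\OO(3O)\cdot p_2^*\OO(3O))$ because $(p_i^*\OO(3O))^2=0$.

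The remaining assertions are direct checks. At $(O,O)=((0:1:0),(0:1:0))$ the formulas \eqref{eq:abc} give $a=1$, $b=0$ and $c=0$, so $\phi(O,O)=(1:0:0)=e$. For the symmetry, swapping the indices $1\leftrightarrow2$ fixes $a=Y_1Y_2$, fixes $b=Z_1X_2+X_1Z_2$, and sends $c=Z_1Z_2+X_1Y_2+Y_1X_2$ to $Z_2Z_1+X_2Y_1+Y_2X_1=c$. Hence $\phi\circ s=\phi$.

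I expect no step to be a real obstacle. The one point needing care is that the identification $\phi^*\OO(1)\simeq L$ is compatible with the chosen trivializations; this holds because $a,b,c$ are by construction sections of $L$, of bidegree $(1,1)$ in the two sets of cubic coordinates.
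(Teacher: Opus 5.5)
Your proposal is correct and follows the paper's proof essentially step for step: the base-point-free net of Lemma~\ref{lem:abc} gives the morphism with $\phi^*\OO(1)\simeq L$, ampleness of $L$ rules out contracted curves and gives finiteness and surjectivity, the degree is $(L\cdot L)=18$ from the intersection numbers on $E\times E$, and the value at $(O,O)$ and the invariance under $s$ are direct checks. One cosmetic point: for a contracted curve $\Gamma$ you should simply say that $\phi^*\OO(1)|_\Gamma$ is trivial, rather than writing the degree of $\OO(1)$ on the point $\phi(\Gamma)$.
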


\begin{proof}
By Lemma~\ref{lem:abc}, $\phi$ is a morphism with $\phi^*\OO(1)\simeq L$. A curve contracted by $\phi$ would have $L$-degree zero, which is impossible since $L$ is ample, so $\phi$ is finite and hence surjective. Its degree is $(L\cdot L)=18$, since $L\equiv3(O\times E)+3(E\times O)$ and $(O\times E)\cdot(E\times O)=1$. The formulas \eqref{eq:abc} are symmetric under exchanging the indices.
\end{proof}

All families below are pulled back from families of sextics over $\PP^2$ along $\phi$, and we write $\psi_1,\psi_2\colon\PP^2\to\Xq$ for the moduli maps of the two families, so that $\Phi_i=\psi_i\circ\phi$. The plane $\PP^2$ carries the geometry of the families, while the abelian surface supplies the group structure used for counting and for the elliptic curves of Section~\ref{sec:specialization}.

\subsection{The linear family}
The first family is linear in the net. Let
\begin{equation}\label{eq:linfamily}
F^{(1)}_{a,b,c}=a(X^6+Z^6)+b(X^5Z+XZ^5)+c(-X^5Z+X^4Z^2-2X^3Z^3+X^2Z^4),
\end{equation}
with coefficient vector $(a,\,b-c,\,c,\,-2c,\,c,\,b,\,a)$. In the basis
\[
(Z_1Z_2,\ Z_1X_2,\ Z_1Y_2,\ X_1Z_2,\ X_1X_2,\ X_1Y_2,\ Y_1Z_2,\ Y_1X_2,\ Y_1Y_2)
\]
of $H^0(A,L)$ its coefficient matrix is
\[
\begin{pmatrix}
0&0&0&0&0&0&0&0&1\\
-1&1&0&1&0&-1&0&-1&0\\
1&0&0&0&0&1&0&1&0\\
-2&0&0&0&0&-2&0&-2&0\\
1&0&0&0&0&1&0&1&0\\
0&1&0&1&0&0&0&0&0\\
0&0&0&0&0&0&0&0&1
\end{pmatrix},
\]
of rank three: the seven coefficients are linear combinations of $a$, $b$, $c$, which is why the family factors through $\phi$.

\begin{proposition}\label{prop:linear}
Every member of \eqref{eq:linfamily} with $(a:b:c)\in\PP^2(\Qbar)$ is semistable, and $F^{(1)}_{1,0,0}=X^6+Z^6$ is squarefree. The moduli map $\Phi_1=\psi_1\circ\phi\colon A\to\Xq$ is finite onto a surface, $\Phi_1^*\Hq\simeq L^{\otimes60}$, and $h_{\inv}(\Phi_1(R))=3\hhat_E(p_1R)+3\hhat_E(p_2R)+O(1)$. Its invariants are
\begin{align*}
I_2&=-8\,(30a^2-5b^2+5bc-c^2),\\
I_4&=4\,(405a^4-135a^2b^2+135a^2bc+198a^2c^2+150ab^2c+30abc^2\\
&\qquad+9ac^3-20b^4+40b^3c+17b^2c^2-25bc^3-5c^4),\\
I_6&=-4\,(29970a^6-14985a^4b^2+14985a^4bc+4878a^4c^2+9300a^3b^2c\\
&\qquad-6240a^3bc^2+3168a^3c^3+560a^2b^4+1130a^2b^3c-5051a^2b^2c^2\\
&\qquad+4735a^2bc^3-748a^2c^4-800ab^4c-260ab^3c^2+942ab^2c^3\\
&\qquad-182abc^4-60ac^5+80b^6-240b^5c+48b^4c^2\\
&\qquad+330b^3c^3-270b^2c^4+32bc^5+17c^6),
\end{align*}
and $I_{10}$ is the form of degree ten displayed in \eqref{eq:I10lin}.
\end{proposition}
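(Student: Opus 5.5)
The proof splits into semistability, the moduli map and its pullback, the invariant formulas, and the height formula.

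\emph{Semistability.} By Proposition~\ref{prop:ICnorms}, semistability fails exactly when $F^{(1)}_{a,b,c}$ has a root of multiplicity at least four. A sextic $\ell^4q$ is zero together with its first three derivatives at the root of $\ell$. So I would show that the coefficient vector $(a,b-c,c,-2c,c,b,a)$ never lies in the threefold of Lemma~\ref{lem:codim}. The cleanest route is to check that $I_2,I_4,I_6,I_{10}$, as forms in $(a,b,c)$, have no common zero on $\PP^2(\Qbar)$. I would do this by an elimination or resultant computation, certified in Appendix~\ref{app:data}. For example, I would compute the resultants of $I_2$ with $I_4$ and with $I_6$ after dehomogenizing in the charts, and check that the result is a nonzero constant times a power of the ideal's irrelevant part. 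A more hands-on alternative is case analysis on a putative quadruple root $\lambda$, using the symmetry $X\leftrightarrow Z$ visible in the outer coefficients. The root $\lambda=0$ forces $a=b=c=0$, since the coefficients of $Z^6$, $XZ^5$, $X^2Z^4$ are $a,b,c$. For $\lambda\ne0$ one eliminates $\lambda$ from the four derivative conditions. Squarefreeness of $X^6+Z^6$ is immediate, since its roots are the distinct sixth roots of $-1$.

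\emph{Pullback and finiteness.} The coefficients are linear in $a,b,c$, so $I_d(F^{(1)})$ is a form of degree $d$ on $\PP^2$. Hence $\psi_1^*\OO(60)\simeq\OO_{\PP^2}(60)$, by the base-point-freeness just established, exactly as in Theorem~\ref{thm:pullback}. Composing with $\phi$ and using Proposition~\ref{prop:net} gives $\Phi_1^*\Hq\simeq L^{\otimes60}$. This bundle is ample, so $\Phi_1$ contracts no curve and is finite; its image is therefore a surface.

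\emph{Invariant formulas.} The displayed $I_2,I_4,I_6$ and $I_{10}$ in \eqref{eq:I10lin} come from substituting the coefficient vector into \eqref{eq:IC}. As a check, $I_2=-240a^2+40(b-c)b-16c^2+6\cdot4c^2=-8(30a^2-5b^2+5bc-c^2)$, which matches.

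\emph{Height formula.} Since $L$ is symmetric, Theorem~\ref{thm:decomposition} has no linear term. Therefore $h_{\inv}\circ\Phi_1=h_L+O(1)=3\hhat_E(p_1R)+3\hhat_E(p_2R)+O(1)$, as in the proof of Theorem~\ref{thm:surface}.

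The main obstacle is the first step: proving there is no common zero on all of $\PP^2(\Qbar)$. This is a finite but nontrivial Gröbner or resultant certificate, and I would verify it by computer.
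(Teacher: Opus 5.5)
Your proof is correct. Outside the semistability step it is the paper's argument: base-point-freeness gives $\psi_1^*\OO(60)\simeq\OO_{\PP^2}(60)$, ampleness gives finiteness, and symmetry of $L$ removes the linear term. Your check of $I_2$ is also right.

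For semistability the two routes differ. The paper works directly with the sextic. A root of multiplicity four at a finite $x$ makes $(a,b,c)$ a nonzero kernel vector of the $4\times3$ matrix of derivatives of orders $0,\ldots,3$ of $g_a=x^6+1$, $g_b=x^5+x$, $g_c=-x^5+x^4-2x^3+x^2$ at $x$. Two of its $3\times3$ minors, univariate of degrees $12$ and $9$, would then have a common root, and they do not. At $Z=0$ the coefficients $a,\ b-c,\ c$ of $X^6,X^5Z,X^4Z^2$ form a triangular system. This is the concrete form of your hands-on alternative, with a much lighter certificate. Note that the family is not symmetric under $X\leftrightarrow Z$, because the $c$-part is not, so infinity does need this separate (easy) check.

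Your primary route, showing that the pulled-back invariants have no common zero on $\PP^2(\Qbar)$ and invoking Proposition~\ref{prop:ICnorms}, is also sound. It gives base-point-freeness directly. Done over $\Z$, it is exactly the elimination behind Proposition~\ref{prop:linear-git}, so it would also give you the list of bad primes.

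State it carefully, though. $\Res_c(I_2,I_4)$ and $\Res_c(I_2,I_6)$ are binary forms in $(a,b)$ that certainly have zeros, since any two plane curves meet. You therefore need a second step: show that these two forms have no common zero, i.e.\ that their resultant is a nonzero number. This is only a sufficient test, but it succeeds here. The nonzero $N_1$ and the separate check on the line $a=0$ in the proof of Proposition~\ref{prop:linear-git} show this. Since $I_2$ has $c^2$-coefficient $8$, no solutions are lost in the projection from $(0:0:1)$.
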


\begin{proof}
In the chart $Z=1$ let $g_a=x^6+1$, $g_b=x^5+x$, $g_c=-x^5+x^4-2x^3+x^2$. A member with a root of multiplicity four at $x$ gives a nonzero kernel vector of the $4\times3$ matrix of derivatives of orders $0,\ldots,3$ of $(g_a,g_b,g_c)$ at $x$. The minors on the rows $(0,1,2)$ and $(1,2,3)$ are
\begin{gather*}
-2(x^{12}-6x^{11}+6x^{10}+10x^9-15x^8+30x^7\\
-30x^5+15x^4+10x^3-6x^2+6x-1),\\
-120x^3(2x^6-9x^5+6x^4+5x^3-6x^2+9x-2),
\end{gather*}
which have no common root. At $Z=0$ the jets of orders $0,1,2$ form a triangular matrix of rank three. The invariants are computed from \eqref{eq:IC}. The remaining statements follow as in Theorem~\ref{thm:surface}, since $\phi^*\OO(1)=L$ and the coefficients are linear in $(a,b,c)$.
\end{proof}

For the linear family the local comparison of Theorem~\ref{thm:B} can be made uniform in the parameter: the finite places at which $h_{\GIT}$ and $h_{\inv}$ can differ form a fixed finite set.

\begin{proposition}\label{prop:linear-git}
Let $(a:b:c)\in\PP^2(K)$ and let $v$ be a finite place of $K$ whose residue characteristic is not in $\{2,3,5,14447\}$. Then the local difference $\log\rho_v-\log W_v$ for $F^{(1)}_{a,b,c}$ vanishes. In particular, for every $R\in A(\Qbar)$ the difference $h_{\GIT}(F^{(1)}_R)-h_{\inv}(\Phi_1(R))$ is the sum of an archimedean term, which is at least $-\frac12\log302$, and of nonnegative terms at places above $2,3,5$ and $14447$. The prime $14447$ is genuinely exceptional: the member with $(a:b:c)\equiv(1:8832:13222)$ reduces modulo $14447$ to $(x+890)(x+4091)(x+1269)^4$.
\end{proposition}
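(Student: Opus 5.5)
The plan is to reduce everything to Theorem~\ref{thm:gitgeneral}(5): at a finite place $v$ with $\|F\|_v=1$, the local difference $\log\rho_v-\log W_v$ vanishes as soon as one invariant $I_i(F)$ is a $v$-adic unit. So for a given $(a:b:c)$ I rescale to primitive coordinates at $v$, so that $\max(|a|_v,|b|_v,|c|_v)=1$. Then I need two things. First, the sextic $F^{(1)}_{a,b,c}$ should still have $\|F\|_v=1$. Second, some $I_d(F)$ should be a unit.

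For the first point, the coefficient vector $(a,\,b-c,\,c,\,-2c,\,c,\,b,\,a)$ contains $a$, $b$ and $c$ themselves. Hence $\|F\|_v=\max(|a|_v,|b|_v,|c|_v)=1$ at every finite place, and this holds even above $2$.

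The second point is the real content. I reformulate it as the statement that the homogeneous forms $I_2,I_4,I_6,I_{10}$ of Proposition~\ref{prop:linear} in $\Z[a,b,c]$ have no common zero in $\PP^2(\overline{\F}_p)$ for $p\notin\{2,3,5,14447\}$. Given that, the reduction of a primitive $(a,b,c)$ is a point of $\PP^2(\overline\F_p)$, so some $I_d(a,b,c)$ is nonzero mod $v$ and thus a unit.

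To prove the no-common-zero statement I would mirror the characteristic-zero argument of Proposition~\ref{prop:linear}, done integrally. The set of common zeros of the invariants in characteristic $p$ is governed by the nullcone. I would avoid relying on the nullcone description in small characteristic and instead compute an elimination directly. Concretely, I would compute resultants of $I_2$ with $I_4$, $I_6$ and $I_{10}$, first with respect to $c$ and then $b$, together with the dehomogenized versions in the charts $a=1$ and $a=0$. This produces a nonzero integer $N$ lying in the ideal $(I_2,I_4,I_6,I_{10})+(a,b,c)^k$ over $\Z$ for some $k$. Equivalently, a Gröbner basis over $\Z$ of the saturation of this ideal by $(a,b,c)$ contains a nonzero integer $N$. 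Any prime at which a common projective zero exists must divide $N$. Factoring $N$ should leave only $2,3,5,14447$.

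Alternatively, one can use the root-multiplicity criterion from the proof of Proposition~\ref{prop:linear}. Take the integer resultant of the two $3\times3$ minors displayed there, after removing the contribution of the factor $x^3$ and accounting for the chart $Z=0$. Its prime divisors are the primes at which a member can acquire a quadruple root, and these should be $\{2,3,5,14447\}$ outside small characteristics. For $p\ge 7$ the invariants still cut out exactly the quadruple-root locus, since the Igusa--Clebsch description is valid away from $2,3,5$. This route gives a cleaner certificate, and I would cross-check it against the elimination.

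For the ``in particular'', the archimedean part is at least $-\frac12\log302$ by Theorem~\ref{thm:gitgeneral}(2) with $\|I\|^*=\sqrt{302}$. The finite terms are nonnegative by Theorem~\ref{thm:gitgeneral}(5) and vanish off the exceptional primes by the above. Sharpness at $14447$ is checked by expanding the given member mod $14447$ and confirming the stated factorization.

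The main obstacle is the elimination step. It has to be carried out rigorously over $\Z$ with high-degree forms, $I_{10}$ having degree ten in three variables. It also has to control the charts at infinity, so that no spurious prime is introduced or a genuine one missed. I would first try the minors' resultant, which is univariate, and only fall back on the saturated Gröbner basis computation if the small characteristics $2,3,5$ cause trouble.
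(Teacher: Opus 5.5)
Your reduction and your elimination route are the paper's proof. The paper rescales $(a,b,c)$ to be $v$-primitive, notes that $a,b,c$ occur among the coefficients so that $\|F\|_v=1$, and applies Theorem~\ref{thm:gitgeneral}(5). Everything then reduces to showing that $I_2,I_4,I_6,I_{10}$ have no common zero on $\PP^2(\overline{\F}_p)$.

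The paper does this in the chart $a=1$ with $r_k=\Res_c(I_2,I_k)$, $N_1=\Res_b(r_4,r_6)$ and $N_2=\Res_b(r_4,r_{10})$. It uses the \emph{common} prime factors of $N_1$ and $N_2$. A single eliminant is not enough, since $19$ and $821$ divide $N_1$ and $1493$ divides $N_2$. The line $a=0$ is then handled through $5b^2-5b+1$, and the point $(0:1:0)$ separately. Your first formulation of the eliminant is slightly off: the ideal $(I_2,\dots,I_{10})+(a,b,c)^k$ contains no nonzero constants. The correct condition is $N\cdot(a,b,c)^k\subseteq(I_2,I_4,I_6,I_{10})$, which is your ``equivalently'' version.

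The route you say you would try first, however, does not prove the statement. Appendix~\ref{app:data} records the resultant of the two jet minors of Proposition~\ref{prop:linear} as $2^{45}3^{22}5^{16}\cdot761\cdot14447$. Removing the factor $x^3$ and the constant $-120$ strips off only powers of $2,3,5$, so $761$ survives. Your assertion that its prime divisors are exactly the primes at which some member acquires a quadruple root is therefore false.

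The reason is that the test is only necessary, not sufficient. Vanishing of the minors on rows $(0,1,2)$ and $(1,2,3)$ of the $4\times3$ jet matrix forces rank $\le2$ only when the rows of first and second derivatives are independent. You would still need a separate argument at $p=761$, for example via the remaining minors or the invariants modulo $761$.

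That route also rests on an input you only assert: that for $p\ge7$ the reduced $I_2,I_4,I_6,I_{10}$ vanish simultaneously only on the quadruple-root locus. Theorem~\ref{thm:gitgeneral}(5) needs some invariant to be a $v$-adic unit, not merely the absence of a quadruple root modulo $v$. This characteristic-$p$ nullcone statement would need a proof or a citation, for instance to Igusa's integral theory. The direct elimination produces the unit invariant with no such input, which is why you should use it.

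Your treatment of the \emph{in particular} part is fine: the archimedean bound comes from $\|I\|^*=\sqrt{302}$, and nonnegativity at the finite places from Theorem~\ref{thm:gitgeneral}(2). The factorization modulo $14447$ is likewise a direct check, as you say.
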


\begin{proof}
Scale $(a,b,c)$ to be primitive at $v$; then the coefficient vector is primitive, since $a$, $b$ and $c$ all occur among its entries. By Theorem~\ref{thm:gitgeneral}(5) it suffices to show that $I_2,I_4,I_6,I_{10}$ of the family have no common zero on $\PP^2(\overline\F_p)$ for $p\notin\{2,3,5,14447\}$. In the chart $a=1$ let $r_k(b)=\Res_c(I_2,I_k)$ for $k=4,6,10$, computed with formal degrees. A common zero $(b_0,c_0)$ modulo $p$ gives a common root $b_0$ of the reductions of $r_4,r_6,r_{10}$, hence $p$ divides both $N_1=\Res_b(r_4,r_6)$ and $N_2=\Res_b(r_4,r_{10})$. The common prime factors of $N_1$ and $N_2$ are $2,3,5,14447$ (Appendix~\ref{app:data}). On the line $a=0$ the restriction of $I_2$ is $8(5b^2-5bc+c^2)$; setting $c=1$ and taking primitive parts, the resultants of $5b^2-5b+1$ with the restrictions of $I_4,I_6,I_{10}$ are $2^23^25^2\cdot31$, $3^45^2\cdot79$ and $11\cdot109\cdot6199$, which have no prime factor common to all three; at $(0:1:0)$ we have $I_2=40$ and $I_{10}=256$. The factorization modulo $14447$ is a direct computation, and the other statements follow from Theorem~\ref{thm:gitgeneral}.
\end{proof}

\subsection{The quadratic family}
The curves of the linear family have no evident rational points. The second family is designed to have two, the Weierstrass point $x=0$ and a point at infinity, and their difference will give the section of the relative Jacobian that we need. Let
\begin{equation}\label{eq:f0}
\begin{aligned}
f_0(x)&=x(x-1)(x-2)(x-3)(x+20)(x+5)\\
&=x^6+19x^5-39x^4-331x^3+950x^2-600x,
\end{aligned}
\end{equation}
and
\begin{multline}\label{eq:quadfamily}
F^{(2)}_{a,b,c}=a^2Z^6f_0(X/Z)+abXZ^5+acX^2Z^4+b^2(X^5Z+XZ^5)\\
+bc(X^4Z^2+X^2Z^4)+c^2(X^4Z^2+XZ^5),
\end{multline}
whose coefficients, from $X^6$ to $Z^6$, are
\begin{multline}\label{eq:quadcoeff}
\bigl(a^2,\ 19a^2+b^2,\ -39a^2+bc+c^2,\ -331a^2,\\
950a^2+ac+bc,\ -600a^2+ab+b^2+c^2,\ 0\bigr).
\end{multline}
Each member is divisible by $X$, and its leading coefficient is the square $a^2$. Thus for $a\ne0$ the curve $y^2=F^{(2)}_{a,b,c}(x,1)$ has the rational Weierstrass point $W=(0,0)$ and a rational point $P_\infty^+$ at infinity, characterized by $y/x^3\to a$. If $(a,b,c)$ is primitive at a prime, so is the coefficient vector.

\begin{theorem}\label{thm:quadratic}
Every member of \eqref{eq:quadfamily} with $(a:b:c)\in\PP^2(\Qbar)$ is semistable, and $F^{(2)}_{1,0,0}=Z^6f_0(X/Z)$ is squarefree. The moduli map $\Phi_2=\psi_2\circ\phi\colon A\to\Xq$ is finite onto a surface, $\psi_2^*\Hq\simeq\OO_{\PP^2}(120)$, $\Phi_2^*\Hq\simeq L^{\otimes120}$, and
\begin{equation}\label{eq:quad-height}
h_{\inv}(\Phi_2(R))=6\hhat_E(p_1R)+6\hhat_E(p_2R)+O(1).
\end{equation}
Over the open set where $a\Disc(F^{(2)})\ne0$, the class $\sigma=[P_\infty^+-W]$ is a section of the relative Jacobian.
\end{theorem}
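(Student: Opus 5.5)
The proof will follow the pattern of Proposition~\ref{prop:linear} and Theorem~\ref{thm:surface}, with quadratic coefficients in place of linear ones.

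\textbf{Semistability.} We show that no member of \eqref{eq:quadfamily} has a root of multiplicity $\ge4$ over $\Qbar$, and we split by chart.

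At $Z=0$, i.e.\ the root $(1:0)$: a root of multiplicity four there forces the first four coefficients $a^2$, $19a^2+b^2$, $-39a^2+bc+c^2$, $-331a^2$ to vanish. These give $a=0$, then $b=0$, then $c=0$, which is impossible.

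In the chart $Z=1$, write $F=a^2f_0+g$ with $g$ the remaining terms. A root of multiplicity four at $x$ means that the jets of orders $0,\dots,3$ vanish. These are four quadratic forms in $(a,b,c)$ depending on $x$. I would eliminate $(a:b:c)$: compute resultants of the jet equations with respect to $b$ and $c$ to obtain polynomials in $x$ and $a$. Then I would check that they have no common solution with $(a,b,c)\ne0$, treating separately the case $a=0$. When $a=0$ the form $b^2(x^5+x)+bc(x^4+x^2)+c^2(x^4+x)$ is explicit, and multiplicity four is easy to exclude by hand.

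This elimination is the main obstacle. With quadratic coefficients one cannot simply use the kernel of a $4\times3$ matrix. What I would try first is to view the jets as linear in the six monomials $a^2,ab,ac,b^2,bc,c^2$, giving a $4\times6$ matrix. I would then intersect its kernel with the Veronese surface, or else run a Gröbner basis computation on the ideal of the four jets together with $(a,b,c)\ne0$ and certify that it is irrelevant. The certificate would be recorded in Appendix~\ref{app:data}.

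\textbf{Squarefreeness at the base point.} The form $F^{(2)}_{1,0,0}=Z^6f_0(X/Z)$ is squarefree because $f_0$ has the six distinct roots $0,1,2,3,-20,-5$ and degree exactly six.

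\textbf{Pullback of the polarization.} The coefficients are quadratic forms in $(a,b,c)$, so each $I_d$ is a section of $\OO_{\PP^2}(2d)$. Semistability of every member means the invariants have no common zero on $\PP^2$ (Proposition~\ref{prop:ICnorms}). Hence the weighted monomials of degree $60$ generate $\OO(120)$, and $\psi_2^*\Hq\simeq\OO_{\PP^2}(120)$. Pulling back along $\phi$ with $\phi^*\OO(1)\simeq L$ (Proposition~\ref{prop:net}) gives $\Phi_2^*\Hq\simeq L^{\otimes120}$.

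\textbf{Finiteness.} $L$ is ample, so no curve is contracted by $\Phi_2$. Thus $\Phi_2$ is quasi-finite and proper, hence finite onto a surface.

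\textbf{Height formula.} Since $L$ is symmetric, Theorem~\ref{thm:decomposition} (via Proposition~\ref{prop:quotient}) has no linear term. This gives $h_{\inv}\circ\Phi_2=\frac1{60}h_{L^{\otimes120}}+O(1)=2h_L+O(1)$. Combined with $h_L=3\hhat_E(p_1R)+3\hhat_E(p_2R)+O(1)$, this yields \eqref{eq:quad-height}.

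\textbf{The section.} On the open set where $a\Disc(F^{(2)})\ne0$, the fibre $y^2=F(x,1)$ is a smooth genus-two curve with a sextic model. Its constant term vanishes, so $W=(0,0)$ is a point. The leading coefficient is $a^2$ with $a\ne0$, so the two points at infinity are individually rational, and $P_\infty^+$ is defined by $y/x^3\to a$. Both points therefore extend to sections of the smooth relative curve. Their difference defines a section $\sigma=[P_\infty^+-W]$ of the relative Jacobian $\Pic^0$.
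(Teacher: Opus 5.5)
Your proposal is correct and follows essentially the paper's proof. Both use the leading-coefficient check at infinity and a Gr\"obner-basis (unit, equivalently irrelevant, ideal) certificate for the four jets in the chart $Z=1$; the paper runs this in each of the charts $a=1$, $b=1$, $c=1$, while you dispose of $a=0$ by hand. The remaining steps match: base-point-freeness of the pulled-back invariants for $\psi_2^*\Hq\simeq\OO_{\PP^2}(120)$, ampleness for finiteness, and symmetry of $L$ for \eqref{eq:quad-height}. Your explicit construction of $\sigma$ supplies the justification that the paper leaves to the discussion preceding the theorem.
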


\begin{proof}
In the chart $Z=1$ a root of multiplicity four at $x$ forces the four polynomials $F$, $\partial_xF$, $\partial_x^2F$ and $\partial_x^3F$ to vanish. In each of the charts $a=1$, $b=1$, $c=1$ these four polynomials generate the unit ideal of the polynomial ring over $\Q$ in $x$ and the two remaining coordinates; a reduced Gr\"obner basis computation shows this (Appendix~\ref{app:data}). A root of multiplicity four at infinity forces the three leading coefficients $a^2$, $19a^2+b^2$ and $-39a^2+bc+c^2$ to vanish, hence $a=b=c=0$. The coefficients are quadratic in $(a,b,c)$, so $I_d$ pulls back to a section of $\OO(2d)$ on $\PP^2$, which gives $\psi_2^*\Hq\simeq\OO(120)$; finiteness follows from ampleness as before, and \eqref{eq:quad-height} from the symmetry of $L$.
\end{proof}

\begin{remark}\label{rem:unstable-reduction}
Semistability in characteristic zero does not survive reduction everywhere. The member $(a:b:c)\equiv(1:171:376)$ reduces modulo $587$ to $x(x+305)(x+48)^4$, and $(1:1300:666)$ reduces modulo $2341$ to $x(x+806)(x+338)^4$. Similarly, the stable form $\prod_{j=0}^5(X-5jZ)$ is unstable modulo $5$.
\end{remark}

\section{Sections of the relative Jacobian}\label{sec:sections}

A section of a relative Jacobian is useful for comparing heights only if it has infinite order. In this section we prove that the section $\sigma$ of Theorem~\ref{thm:quadratic} has infinite order, first over the function field and then on every fibre of an explicit infinite lattice of parameters, and we give explicit fibres of small height.

\subsection{Torsion and rational points}
The existence of a section of the relative Jacobian is not enough to produce positive heights. For instance, on $y^2=x(x-1)(x-2)(x-3)(x-t)$ over $\Q(t)$ we have $\divisor(x/(x-1))=2W_0-2W_1$ with $W_0=(0,0)$ and $W_1=(1,0)$, so $[W_0-W_1]$ has exact order two and height zero. Differences of Weierstrass points are always $2$-torsion, and the class $\sigma=[P_\infty^+-W]$ was chosen to avoid this.

Rational points on two-parameter families are easy to produce. For $a_0\in\Q\setminus\{0,1,2,3\}$, $c_0=a_0(a_0-1)(a_0-2)(a_0-3)$, $u=a_0-c_0s^2$ and $v=a_0-t^2$, the curve $y^2=x(x-1)(x-2)(x-3)(x-u)(x-v)$ over $\Q(s,t)$ contains $(a_0,c_0st)$, because the right side at $x=a_0$ equals $c_0\cdot c_0s^2\cdot t^2$. We shall not use this family. It illustrates that rational points are cheap, whereas proving that the resulting classes have infinite order is the real difficulty; the rest of this section addresses it for the family $F^{(2)}$.

\subsection{Torsion criteria}
Both arguments below rest on the injectivity of torsion under reduction at a prime of good reduction, which we use in the following two forms.

\begin{lemma}\label{lem:criteria}
Let $J/\Q$ be an abelian variety and let $p,q\ge3$ be primes of good reduction.
\begin{enumerate}
\item If $\ell$ is a prime with $\ell\nmid\#J(\F_q)$ and $D\in J(\Q)$ is such that $\ell$ divides the order of the reduction of $D$ in $J(\F_p)$, then $D$ has infinite order.
\item If $p_1,\ldots,p_k\ge3$ are primes of good reduction with $\gcd_i\#J(\F_{p_i})=1$, then $J(\Q)_{\tors}=0$.
\end{enumerate}
\end{lemma}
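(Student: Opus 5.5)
The proof reduces both statements to the standard fact that reduction is injective on torsion. Let $p\ge3$ be a prime of good reduction for $J$, $\mathcal J$ the N\'eron model over $\Z_{(p)}$, and $\mathrm{red}_p\colon J(\Q)=\mathcal J(\Z_{(p)})\to J(\F_p)$ the reduction map. Its kernel is the group of $\Z_p$-points of the formal group of $\mathcal J$. Because the ramification index $e=1$ satisfies $e<p-1$ when $p\ge3$, this kernel is torsion-free. Hence $\mathrm{red}_p$ restricted to $J(\Q)_{\tors}$ is injective.

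I would cite this from the literature (e.g.\ Katz's appendix to Silverman's treatment, or the formal-group argument in \cite{HindrySilverman}) rather than reprove it. The key input is that $\log$ and $\exp$ converge on $p\widehat{\mathcal J}(\Z_p)$ and identify it with a torsion-free $\Z_p$-module. The condition $p\ge3$ is exactly what is needed, since for $p=2$ the kernel can contain $2$-torsion.

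For (2), injectivity gives that $\#J(\Q)_{\tors}$ divides $\#J(\F_{p_i})$ for each $i$. So it divides their gcd, which is $1$.

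For (1), argue by contradiction. Suppose $D$ has finite order $n$. Injectivity at $q$ gives $n\mid\#J(\F_q)$, so $\ell\nmid n$. On the other hand, the order of $\mathrm{red}_p(D)$ divides $n$, since reduction is a homomorphism; in fact it equals $n$ by injectivity at $p$. Hence $\ell\mid n$, a contradiction. So $D$ has infinite order.

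The only nontrivial step is the torsion-freeness of the kernel of reduction, and that is a citation. The rest is elementary divisibility. I would also remark that in (1) the hypotheses at $p$ and at $q$ are used separately, so $p=q$ is allowed. In that case the statement reduces to: the order of the reduction of $D$ does not divide $\#J(\F_p)$, which is impossible. So in practice one takes $p\ne q$, as with $p=13$ and $q=17$ later.
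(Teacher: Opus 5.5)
Your proof is correct and follows the paper's argument: you use injectivity of reduction on torsion at good primes $p\ge3$ (from $e=1<p-1$) to get (2) directly, and for (1) you embed $\langle D\rangle$ into $J(\F_q)$ where the paper embeds the point $\frac n\ell D$ of order $\ell$, which is the same divisibility argument. Your closing remark on $p=q$ is slightly misphrased: in that case the hypotheses of (1) are inconsistent, since the order of the reduction of $D$ divides $\#J(\F_p)$, so (1) holds vacuously.
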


\begin{proof}
For a prime $p\ge3$ of good reduction the reduction map is injective on $J(\Q)_{\tors}$, because the ramification index $1$ is smaller than $p-1$ \cite[Appendix]{Katz}. This proves (2). For (1), if $D$ had finite order $n$, then $\ell$ would divide $n$ since the reduction of $D$ has order dividing $n$, and $\frac n\ell D$ would be a rational point of order $\ell$; it would inject into $J(\F_q)$, contradicting $\ell\nmid\#J(\F_q)$.
\end{proof}

\subsection{The reference curve and the congruence lattice}
We first treat the fibre over the base point and then propagate the result to a lattice of parameters that reduce to the base point modulo two primes. Let $C_0:y^2=f_0(x)$, so that $C_0$ is the fibre over $e=(1:0:0)$, and let $J_0=\Jac(C_0)$.

\begin{theorem}\label{thm:nontorsion}
The class $D=[P_\infty^+-W]\in J_0(\Q)$ has infinite order. Consequently the section $\sigma$ of Theorem~\ref{thm:quadratic} has infinite order over the function field of $A$, and over that of $\PP^2$.
\end{theorem}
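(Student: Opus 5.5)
The plan is to apply Lemma~\ref{lem:criteria}(1) to $J_0=\Jac(C_0)$ with the primes $p,q\in\{13,17\}$, as announced in the introduction. First I check that $13$ and $17$ are primes of good reduction for $C_0$. The roots of $f_0$ are $0,1,2,3,-20,-5$, and their pairwise differences are
\[
1,2,3,20,5,\ 1,2,21,6,\ 1,22,7,\ 23,8,\ 15.
\]
None of these is divisible by $13$ or $17$. So $\Disc(f_0)$ is prime to $13\cdot17$. Since $f_0$ has degree six, there is no extra condition at infinity.

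Next I compute $\#J_0(\F_p)$ for $p=13,17$ by counting points. Let $N_1=\#C_0(\F_p)$ and $N_2=\#C_0(\F_{p^2})$, each counted with the two points at infinity, which are rational because the leading coefficient is $1$. Then $a_1=p+1-N_1$, $a_2$ is obtained from $N_2$, and the $L$-polynomial gives
\[
\#J_0(\F_p)=L(1)=1+a_1+a_2+pa_1+p^2 .
\]
For the reduction $\bar D$ of $D=[P_\infty^+-W]$ in $J_0(\F_p)$, I would compute its exact order using Mumford/Cantor arithmetic. The order divides $\#J_0(\F_p)$, so it is enough to test the maximal proper divisors.

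The goal is a prime $\ell$ that divides the order of $\bar D$ in $J_0(\F_{13})$ (say) but does not divide $\#J_0(\F_{17})$. This would give the contradiction in Lemma~\ref{lem:criteria}(1), so $D$ has infinite order. The main obstacle is that this is a numerical coincidence I cannot predict in advance. If the pair $(13,17)$ fails for every such $\ell$, I would swap the roles of the two primes. Failing that, I would try further good primes, or use the more robust criterion: if $D$ had order $n$, then $n$ would equal the order of $\bar D$ at every good odd prime, so exhibiting two primes where these orders differ also suffices. The computation itself is a routine finite certificate of the kind recorded in Appendix~\ref{app:data}.

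For the consequences, suppose $\sigma$ were torsion over the function field of $A$, say $n\sigma=0$ on the generic fibre. Then $n\sigma$ is a section of the relative Jacobian over the open set of Theorem~\ref{thm:quadratic} that vanishes generically, hence vanishes identically by separatedness. That open set contains $(O,O)$, where $a=1$ and $F^{(2)}_{1,0,0}$ is squarefree. Specializing at $(O,O)$, whose fibre is $C_0$ with $\sigma(O,O)=D$, gives $nD=0$, which is a contradiction. The same argument works over the function field of $\PP^2$ at $e=\phi(O,O)$. Alternatively, the $\PP^2$ statement follows from the one over $A$ by pullback along $\phi$, since the family over $A$ is pulled back from $\PP^2$.
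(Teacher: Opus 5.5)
Your framework is the paper's: good reduction at $13$ and $17$, Lemma~\ref{lem:criteria}(1) applied to $J_0$, and specialization to the fibre over $(O,O)$. Your separatedness argument and the remark that the $\PP^2$ statement follows by pullback along $\phi$ are both fine.

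The gap is that the only substantive step is left undone. You never produce the group orders or the prime $\ell$, and you treat the success of the pair $(13,17)$ as an unpredictable coincidence. As written, the proposal therefore does not yet show that $D$ has infinite order.

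The paper closes this with two point counts and one structural observation, which make Cantor arithmetic unnecessary:
\begin{itemize}
\item The counts give $\#J_0(\F_{13})=176=2^4\cdot11$ and $\#J_0(\F_{17})=288=2^5\cdot3^2$.
\item Modulo $13$ the six roots $0,1,2,3,6,8$ of $f_0$ are rational and distinct. Hence $J_0(\F_{13})[2]\simeq(\Z/2)^4$ already has order $16$, so it is the whole $2$-Sylow subgroup.
\item If $11$ did not divide the order of $\bar D$, then $\bar D$ would lie in this $2$-Sylow subgroup, so $2\bar D=0$, i.e.\ $2P_\infty^+\sim2W$.
\item Since $2W$ is the hyperelliptic class and the $g^1_2$ is unique, this forces $2P_\infty^+=P_\infty^++\iota(P_\infty^+)$. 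Then $P_\infty^+$ would be a Weierstrass point. It is not, because the leading coefficient is $1$, so $P_\infty^+\ne P_\infty^-$ modulo $13$.
\end{itemize}
Thus $\ell=11$ divides the order of $\bar D$ at $13$ and does not divide $288$. Lemma~\ref{lem:criteria}(1) then applies with $p=13$, $q=17$. The choice of primes is dictated by these factorizations, not by luck.

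Two small corrections to your computational recipe. First, with your convention $a_1=p+1-N_1$ one has $\#J_0(\F_p)=1-a_1+a_2-pa_1+p^2$, not $1+a_1+a_2+pa_1+p^2$. This is harmless here, since $N_1=p+1$ at both $13$ and $17$. Second, testing the maximal proper divisors of $\#J_0(\F_p)$ only decides whether the order of $\bar D$ equals $\#J_0(\F_p)$. To certify that $\ell$ divides the order of $\bar D$, check instead that $N'\bar D\ne0$, where $N'$ is $\#J_0(\F_p)$ with its full $\ell$-power removed.
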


\begin{proof}
The six roots of $f_0$ are distinct modulo $13$ and $17$, so $C_0$ has good reduction there. Counting points gives
\[
\begin{array}{c|ccc}
p&\#C_0(\F_p)&\#C_0(\F_{p^2})&\#J_0(\F_p)\\\hline
13&14&182&176=2^4\cdot11\\
17&18&286&288=2^5\cdot3^2
\end{array}
\]
Here, for a curve of genus two with $N_1$ points over $\F_p$ and $N_2$ points over $\F_{p^2}$, we put $c_1=N_1-p-1$ and $c_2=\frac12(c_1^2+N_2-p^2-1)$; the characteristic polynomial of Frobenius is $P_p(T)=T^4+c_1T^3+c_2T^2+pc_1T+p^2$ and $\#J(\F_p)=P_p(1)$. All Weierstrass points of $C_0$ remain rational and distinct modulo $13$, so $J_0(\F_{13})[2]$ has order $16$ and is the $2$-Sylow subgroup of $J_0(\F_{13})$. If the reduction of $D$ were killed by $2$, then $2P_\infty^+\sim2W$, and uniqueness of the hyperelliptic $g^1_2$ would force $P_\infty^+$ to be a Weierstrass point, which it is not. Hence $11$ divides the order of $D$ modulo $13$, and Lemma~\ref{lem:criteria}(1) with $\ell=11$, $p=13$, $q=17$ shows that $D$ has infinite order. A torsion section would specialize to a torsion class at $(O,O)$.
\end{proof}

\begin{theorem}\label{thm:grid}
Let $M=342$ and $R_{n,m}=([Mn]P_0,[Mm]P_0)$ for $(n,m)\in\Z^2$. For every $(n,m)$ the curve $C_{n,m}:y^2=F^{(2)}_{R_{n,m}}(x,1)$ is smooth, the class $\sigma(R_{n,m})$ has infinite order, so that $\Jac(C_{n,m})(\Q)$ has positive rank, and the local GIT corrections of $F^{(2)}_{R_{n,m}}$ at $13$ and $17$ vanish. Moreover
\begin{equation}\label{eq:grid-asymptotic}
\#\{(n,m)\in\Z^2:\ h_{\inv}(\Phi_2(R_{n,m}))\le X\}=\frac{\pi}{6M^2\hhat_E(P_0)}X+O(\sqrt X),
\end{equation}
and the number of distinct moduli points among these curves with $h_{\inv}\le X$ is $\gg X$.
\end{theorem}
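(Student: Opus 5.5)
The plan is to reduce everything to the reference curve $C_0$ at the primes $13$ and $17$, using the fact that $R_{n,m}$ reduces to the origin of $E\times E$ modulo both primes. The first step is to compute the orders of $\tilde P_0$ in $E(\F_{13})$ and $E(\F_{17})$ and to check that both divide $M=342=2\cdot3^2\cdot19$. Since $E:y^2=x^3-2$ has good reduction away from $2,3$, this requires only point counts. The choice of $342$ is presumably dictated exactly by these orders, and I would verify that $\#E(\F_{13})$ and $\#E(\F_{17})$, or at least the orders of $P_0$ there, divide $342$. Then $[Mn]P_0$ reduces to $O$ modulo $13$ and $17$, so $R_{n,m}\equiv(O,O)$. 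Because $a,b,c$ are given by integral polynomials and $\phi(O,O)=e=(1:0:0)$, the primitive representative of $\phi(R_{n,m})$ reduces to $(1:0:0)$ modulo $13$ and $17$. Hence $F^{(2)}_{R_{n,m}}$, which has primitive coefficients by the remark after \eqref{eq:quadcoeff}, reduces to $f_0$ modulo both primes.

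The second step deduces the fibrewise consequences. Since $f_0$ is squarefree modulo $13$ and $17$, the discriminant of $F^{(2)}_{R_{n,m}}$ is a $13$-adic unit, so it is nonzero and the curve $C_{n,m}$ is smooth. In particular $a\ne0$, because the leading coefficient $a^2$ reduces to $1$. By Theorem~\ref{thm:gitgeneral}(5), or Theorem~\ref{thm:B}(2), the local GIT corrections at $13$ and $17$ vanish. Moreover, $J_{n,m}=\Jac(C_{n,m})$ has good reduction at $13$ and $17$ with special fibres $J_0\bmod 13$ and $J_0\bmod 17$. The reduction of $\sigma(R_{n,m})=[P_\infty^+-W]$ is the reduction of $D$, since $W$ and $P_\infty^+$ (with $y/x^3\to a\equiv1$) reduce to the corresponding points of $C_0$. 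Thus $\#J_{n,m}(\F_{17})=288$ is prime to $11$, while $11$ divides the order of $\sigma(R_{n,m})$ modulo $13$, by the argument in Theorem~\ref{thm:nontorsion}. Lemma~\ref{lem:criteria}(1) then shows that $\sigma(R_{n,m})$ has infinite order, which gives positive rank. Its Néron–Tate height is positive because the height is positive definite off torsion.

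The third step is the count. By \eqref{eq:quad-height} and Corollary~\ref{cor:ellcanon}, $h_{\inv}(\Phi_2(R_{n,m}))=6M^2\hhat_E(P_0)(n^2+m^2)+O(1)$. There is no linear term because $L$ is symmetric, and the error is uniform. Counting lattice points in discs of radius $\sqrt{X/(6M^2\hhat_E(P_0))}\pm O(X^{-1/2})$ gives \eqref{eq:grid-asymptotic} with error $O(\sqrt X)$, as in Theorem~\ref{thm:count} with $r=2$ and trivial torsion. Finally, $\Phi_2$ is finite, so its fibres have bounded cardinality, say at most $d$. The number of distinct moduli points is therefore at least $1/d$ times the count, and hence $\gg X$.

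The main obstacle is the first step. It requires verifying that the orders of $P_0$ modulo $13$ and $17$ divide $342$, and checking carefully that the reduction of $\phi(R)$ for $R\equiv(O,O)$ really is $(1:0:0)$ when projective coordinates are normalized integrally. This amounts to a local computation at $(O,O)$, where $Y_i=1$ and $a=1$, so it should be routine. I would also double-check the compatibility of the section's reduction with the chosen sign of $P_\infty^+$.
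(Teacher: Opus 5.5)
Your proposal is correct and follows the paper's proof essentially step by step. The paper uses $\#E(\F_{13})=19$ and $\#E(\F_{17})=18$ to get $M=342$, so $R_{n,m}$ reduces to $(O,O)$ and $F^{(2)}_{R_{n,m}}\equiv a^2f_0$ with $a$ a unit at both primes; it then concludes, as you do, with the argument of Theorem~\ref{thm:nontorsion} and Lemma~\ref{lem:criteria}(1), Theorem~\ref{thm:B}(2), \eqref{eq:quad-height} with a lattice-point count in discs, and the bounded fibres of $\Phi_2$ (your normalization ``$a\equiv1$'' only needs $a$ to be a unit, since $y\mapsto\bar a y$ identifies the reduction with that of $C_0$ and carries $P_\infty^+$ to $P_\infty^+$).
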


\begin{proof}
The curve $E$ has good reduction at $13$ and $17$, with $\#E(\F_{13})=19$ and $\#E(\F_{17})=18$. As $M=19\cdot18$, every $R_{n,m}$ reduces to $(O,O)$ at both primes, so after primitive scaling $(a:b:c)\equiv(1:0:0)$ and $F^{(2)}_{R_{n,m}}\equiv a^2f_0$ modulo $13$ and $17$ with $a$ a unit. Hence $C_{n,m}$ has good reduction at $13$ and $17$ with reduction isomorphic to that of $C_0$, and $\sigma(R_{n,m})$ reduces to the reduction of $D$. The proof of Theorem~\ref{thm:nontorsion} applies verbatim to $\sigma(R_{n,m})$, without any information on the torsion of $\Jac(C_{n,m})(\Q)$. Good reduction makes the local GIT corrections vanish by Theorem~\ref{thm:B}(2). By \eqref{eq:quad-height}, $h_{\inv}(\Phi_2(R_{n,m}))=6M^2\hhat_E(P_0)(n^2+m^2)+O(1)$, and counting lattice points in disks gives \eqref{eq:grid-asymptotic}. The fibres of the finite map $\Phi_2$ have bounded cardinality.
\end{proof}

\subsection{Explicit fibres of small height}
The parameters of Theorem~\ref{thm:grid} have very large height: already the $x$-coordinate of $[342]P_0$ has about $6.9\cdot10^4$ decimal digits. Lemma~\ref{lem:criteria}(2) handles fibres of small height directly.

\begin{proposition}\label{prop:small}
For each $R$ in the table below the curve $C_R:y^2=F^{(2)}_R(x,1)$ is smooth, $\Jac(C_R)(\Q)$ is torsion-free, and $\sigma(R)$ has infinite order. In particular $\Jac(C_R)(\Q)$ has positive rank.
\[
\begin{array}{llll}
\toprule
R&(a:b:c)&\text{primes }p&\#\Jac(C_R)(\F_p)\\
\midrule
(P_0,O),\ (O,P_0)&(5:0:3)&7,\ 47&39,\ 2518\\
(P_0,P_0)&(25:6:31)&3,\ 17&20,\ 329\\
(P_0,-P_0),\ (-P_0,P_0)&(-25:6:1)&3,\ 5&17,\ 24\\
(-P_0,O),\ (O,-P_0)&(-5:0:3)&7,\ 11&81,\ 181\\
(-P_0,-P_0)&(25:6:-29)&3,\ 7&20,\ 77\\
([2]P_0,O),\ (O,[2]P_0)&(-383:0:1290)&11,\ 17&76,\ 321\\
\bottomrule
\end{array}
\]
For example, $R=(P_0,O)$ gives $C_R:y^2=25x^6+475x^5-966x^4-8275x^3+23765x^2-14991x$.
\end{proposition}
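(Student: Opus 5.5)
For each parameter $R$ in the table I would first compute $(a:b:c)=\phi(R)$ directly from \eqref{eq:abc}. The points needed are $P_0=(3,5)$, $-P_0=(3,-5)$, $[2]P_0=(\frac{129}{100},-\frac{383}{1000})$ and $O=(0:1:0)$. For $(P_0,O)$ this gives $a=5$, $b=1\cdot0+3\cdot0=0$ and $c=0+3\cdot1+5\cdot0=3$, hence $(5:0:3)$. By the symmetry of $\phi$ under $s$ (Proposition~\ref{prop:net}) the parameters $(P_0,O)$ and $(O,P_0)$ give the same fibre, and likewise for the other bracketed pairs. This reduces the statement to six explicit curves. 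I would then substitute each $(a:b:c)$ into \eqref{eq:quadcoeff}, rescaled to an integral primitive vector, to write down $F^{(2)}_R$. For $(5:0:3)$ this reproduces the displayed sextic $25x^6+475x^5-966x^4-8275x^3+23765x^2-14991x$.

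Smoothness follows by checking $\Disc(F^{(2)}_R)\neq0$, a finite computation. Since $a\neq0$ in every row, the points $W$ and $P_\infty^+$ are rational and $\sigma(R)=[P_\infty^+-W]\in\Jac(C_R)(\Q)$ is defined, as in Theorem~\ref{thm:quadratic}.

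For torsion-freeness I would use Lemma~\ref{lem:criteria}(2) with the two listed primes. For each listed prime $p\ge3$, I would first verify good reduction: $p\nmid\Disc$ and $p\nmid a$, so that the degree-six model reduces to a genus-two curve. Then I would compute $N_1=\#C_R(\F_p)$ and $N_2=\#C_R(\F_{p^2})$, counting the two points at infinity when the leading coefficient is a square mod $p$ and accounting for $p\mid$ leading coefficient if it occurs. From these, $\#\Jac(C_R)(\F_p)=P_p(1)$ is obtained exactly as in the proof of Theorem~\ref{thm:nontorsion}. The table entries are pairwise coprime in each row: $39,2518$; $20,329$; $17,24$; $81,181$; $20,77$; $76,321$. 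So $J(\Q)_{\tors}=0$. Since $\sigma(R)$ lies in a torsion-free group, it has infinite order as soon as it is nonzero.

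To show $\sigma(R)\neq0$: if $P_\infty^+\sim W$ on a curve of positive genus, then $P_\infty^+=W$, which is false since $W=(0,0)$ is affine. Hence $\sigma(R)\ne0$, so it has infinite order and the rank is positive.

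The main obstacle is only computational reliability. The point counts over $\F_{p^2}$ for $p=47$ and the large discriminants must be done carefully, and I would rely on the certificates of Appendix~\ref{app:data}. A conceptual subtlety to check is that, for primes like $p=3$, the torsion-injectivity of Lemma~\ref{lem:criteria} still applies; it does, since $3\ge3$ and the ramification index is $1<p-1$.
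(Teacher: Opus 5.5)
Your proposal is correct and is essentially the paper's proof: compute $(a:b:c)$ from \eqref{eq:abc}, and use the symmetry of $\phi$ for the pairs. Then check that the discriminant of the primitive sextic is nonzero and prime to the two listed primes, apply Lemma~\ref{lem:criteria}(2) to the coprime group orders, and conclude $\sigma(R)\ne0$ from $P_\infty^+\ne W$.

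The only slip is your stated check $p\nmid a$, which fails for $(-25:6:1)$ at $p=5$, where the sextic reduces to the quintic model $x(x+1)(x+2)(x^2-x+1)$ with a single point at infinity. There, good reduction has to be read off from the binary-form discriminant, as the paper does and as your remark about $p$ dividing the leading coefficient already allows.
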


\begin{proof}
The values of $(a:b:c)$ follow from \eqref{eq:abc} with $P_0=(3:5:1)$, $-P_0=(3:-5:1)$, $[2]P_0=(1290:-383:1000)$ and $O=(0:1:0)$; the symmetry of Proposition~\ref{prop:net} explains the pairs. In each case the discriminant of the primitive sextic is nonzero and prime to the two listed primes, and the two group orders are coprime, so $\Jac(C_R)(\Q)_{\tors}=0$ by Lemma~\ref{lem:criteria}(2). Since $P_\infty^+\ne W$, the class $\sigma(R)$ is nonzero, hence of infinite order.
\end{proof}

\section{N\'eron--Tate heights along curves in the parameter plane}\label{sec:specialization}

We can now compare the N\'eron--Tate height of the section with the weighted moduli height. The comparison is made along curves in the parameter plane, where the specialization theory of heights applies. Besides Theorem~\ref{thm:nontorsion}, two inputs are needed: the family must have trivial trace along the curve, and the curve must pass through the base point $e$, where we control the fibre completely.

\subsection{Absolute simplicity of the reference Jacobian}
Triviality of the trace will follow from the absolute simplicity of the fibre over $e$, which we check by reduction modulo $31$.

\begin{lemma}\label{lem:simple31}
The Jacobian $J_0$ of $C_0:y^2=f_0(x)$ is absolutely simple.
\end{lemma}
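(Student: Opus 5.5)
The plan is to reduce modulo $31$ and show that the reduction $\bar J_0=\Jac(C_0)_{\F_{31}}$ is absolutely simple. Absolute simplicity then lifts to $J_0$: if $J_{0,\Qbar}$ were isogenous to a product of elliptic curves, or contained an elliptic curve, the same would hold for the reduction over $\overline\F_{31}$. This uses the fact that good reduction extends to the field of definition of the factors at a prime above $31$, together with the N\'eron model of the isogeny. First I check that the roots $0,1,2,3,-20,-5$ of $f_0$ are distinct modulo $31$; they are $0,1,2,3,11,26$. So $C_0$ has good reduction at $31$.

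Next I count $N_1=\#C_0(\F_{31})$ and $N_2=\#C_0(\F_{31^2})$. Using the formulas recalled in the proof of Theorem~\ref{thm:nontorsion}, $c_1=N_1-32$ and $c_2=\tfrac12(c_1^2+N_2-962)$, this gives
\[
P_{31}(T)=T^4+c_1T^3+c_2T^2+31c_1T+31^2 .
\]
The numerical values will be recorded in Appendix~\ref{app:data}.

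Finally I apply the standard criterion for abelian surfaces over $\F_q$ (Howe--Zhu, or a direct argument via Honda--Tate). Write $P(T)=T^4+a_1T^3+a_2T^2+qa_1T+q^2$. Then the Jacobian is absolutely simple provided three conditions hold: $P$ is irreducible over $\Q$; $a_2$ is prime to $q$, so the surface is ordinary; and $P$ is not of the form $g(T^n)$ for $n>1$. The last condition means in particular that $a_1\neq0$, and excludes the shapes ruled out by Howe--Zhu (for example $a_1^2=q+a_2$, $a_1^2=2a_2$, $a_1^2=3a_2-3q$).

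Equivalently, I can argue that in the ordinary case the surface is geometrically simple iff no power $\pi^n$ of a Frobenius root lies in a proper subfield of $\Q(\pi)$. One checks this by verifying that $\Q(\pi)$ is a primitive quartic CM field, or a cyclic quartic one, and that no ratio of conjugate roots is a root of unity. This can be tested by computing $P_{31^n}$ for $n\le 12$ (the possible orders of roots of unity in a quartic field) and checking irreducibility each time.

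The main obstacle is simply that the numbers must cooperate. $P_{31}$ has to come out irreducible and ordinary, with none of the degenerate Howe--Zhu shapes. Since $31$ was presumably chosen for this, I expect it to work. If it did not, I would switch to another good prime, or combine two primes: the splitting types of the geometric endomorphism algebra force incompatibility if one prime gives an irreducible $P_p$ and another gives a field $\Q(\pi)$ that is non-isomorphic or non-Galois compatibly.
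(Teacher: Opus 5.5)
Your plan is correct and is essentially the paper's proof: reduce at the good prime $31$, where $N_1=36$ and $N_2=942$ give $c_1=4$ and $c_2=-2$; use irreducibility of $P_{31}$ and ordinarity ($31\nmid -2$); rule out roots of unity among ratios of Frobenius eigenvalues; and lift using good reduction of abelian subvarieties. The only difference is that the paper rules out those roots of unity by factoring $\Res_X\bigl(P_{31}(X),P_{31}(TX)\bigr)$, whereas you use the Howe--Zhu exceptional conditions, which these values avoid ($a_1=4\neq0$ and $16\notin\{29,-4,-99\}$).
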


\begin{proof}
The roots of $f_0$ are distinct modulo $31$, and $\#C_0(\F_{31})=36$, $\#C_0(\F_{31^2})=942$. Hence the characteristic polynomial of Frobenius is
\begin{equation}\label{eq:frob31}
P_{31}(T)=T^4+4T^3-2T^2+124T+31^2,
\end{equation}
and $\#J_0(\F_{31})=P_{31}(1)=1088$. The reduction is ordinary because $31\nmid-2$, and $P_{31}$ is irreducible over $\Q$. Moreover
\begin{multline*}
\Res_X\bigl(P_{31}(X),P_{31}(TX)\bigr)=31^4(T-1)^4(31T^4+64T^3+82T^2+64T+31)^2\\
\cdot(961T^4-620T^3+934T^2-620T+961),
\end{multline*}
and both quartic factors are irreducible with leading coefficients $31$ and $961$, so they have no roots of unity as zeros. Hence no quotient of two distinct Frobenius eigenvalues is a root of unity. Consequently, for every $r\ge1$ the $r$-th powers of the eigenvalues are four distinct conjugates, and the characteristic polynomial of the $r$-th power of Frobenius is irreducible over $\Q$. The reduction of $J_0$ is therefore simple over every finite extension of $\F_{31}$, since an isogeny decomposition over $\F_{31^r}$ would factor this polynomial; compare \cite{HoweZhu}. An abelian subvariety of $J_{0,\Qbar}$ would be defined over a number field and would have good reduction at the primes above $31$, giving a proper abelian subvariety of the reduction. Hence $J_0$ is absolutely simple.
\end{proof}

\subsection{Trivial trace}
We now fix the setting for the rest of the section. Let $B$ be a smooth projective geometrically connected curve over a number field $k$ and $\beta\colon B\to\PP^2$ a nonconstant morphism with $e\in\beta(B(\Qbar))$. Let $Z\subset\PP^2$ be the plane curve $a\cdot\Disc(F^{(2)}_{a,b,c})=0$ and $B^\circ=B\setminus\beta^{-1}(Z)$; this open subset contains every point over $e$. Over $B^\circ$ we have the smooth family $y^2=F^{(2)}_{\beta(b)}(x,1)$, its relative Jacobian $\Jfam_\beta$ with the principal polarization $\Theta_W$, and the section $\sigma_\beta=\beta^*\sigma$. Put $\mathcal K=\Qbar(B)$ and $d_\beta=\deg\beta^*\OO(1)$.

\begin{proposition}\label{prop:trace}
The generic fibre $\Jfam_{\beta,\eta}$ has trivial $\mathcal K/\Qbar$-trace, $\sigma_\beta$ has infinite order, and its N\'eron--Tate height $\delta_\beta=\hhat_{\mathcal K}(\sigma_\beta)$ over $\mathcal K$, normalized by degrees on $B$, is positive.
\end{proposition}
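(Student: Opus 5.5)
The plan is to deduce all three assertions from the fibre over the base point $e$, using Lemma~\ref{lem:simple31} and Theorem~\ref{thm:nontorsion}. Choose $b_0\in B(\Qbar)$ with $\beta(b_0)=e$. Then $b_0\in B^\circ$, and the fibre of $\Jfam_\beta$ at $b_0$ is $J_0$.

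\emph{Trivial trace.} Let $(T,\tau)$ be the $\mathcal K/\Qbar$-trace. It is an abelian variety over $\Qbar$ with a homomorphism $\tau\colon T_{\mathcal K}\to\Jfam_{\beta,\eta}$ whose kernel is finite. After a finite base change $B'\to B$, the image $\tau(T)$ spreads out to an abelian subscheme of $\Jfam_\beta$ over an open subset $U$ of $B'$ whose fibres are all isogenous to $T$. I would then use the Néron model to extend this abelian subscheme over all of $B^\circ$. On the smooth locus, a constant abelian subvariety of the generic fibre extends as an abelian subscheme, since the family is an abelian scheme there and the isotrivial piece extends by the Néron mapping property. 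Specializing at a point $b_0'$ above $b_0$ gives an abelian subvariety of $J_0$ isogenous to $T$. Lemma~\ref{lem:simple31} forces this subvariety to be $0$ or all of $J_0$.

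Suppose the second case holds. Then $\Jfam_{\beta,\eta}$ is isogenous to the constant variety $T_{\mathcal K}$, so the family is isotrivial and the moduli map $\psi_2\circ\beta$ is constant on $B^\circ$. But $\psi_2$ is finite, because $\psi_2^*\Hq\simeq\OO(120)$ is ample, and $\beta$ is nonconstant, so $\psi_2\circ\beta$ is nonconstant. This is a contradiction, so $T=0$. This step is the main obstacle: one has to justify that the specialization of the trace image at $b_0$ is an abelian subvariety of the right dimension, for instance via the constancy of the isogeny class of fibres of an isotrivial abelian subscheme. This works because $b_0$ is a point of good reduction of the whole family.

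\emph{Infinite order and positive height.} If $\sigma_\beta$ were torsion, say $n\sigma_\beta=0$ over $\mathcal K$, then specializing at $b_0$ would give $nD=0$ in $J_0$. This contradicts Theorem~\ref{thm:nontorsion}, since specialization at a point of $B^\circ$ is a homomorphism on sections. Hence $\sigma_\beta$ has infinite order.

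Over a function field with trivial trace, the Lang--Néron theorem gives two facts: the Néron--Tate height attached to the ample symmetric class $\Theta_W$ is nonnegative, and it vanishes exactly on torsion points. It follows that $\delta_\beta=\hhat_{\mathcal K}(\sigma_\beta)>0$. The normalization by degrees on $B$ only rescales the height by a positive factor, so positivity is unaffected.
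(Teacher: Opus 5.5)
Your proposal is correct and follows essentially the same route as the paper: specialize at a point $b_0$ over $e$, get infinite order from Theorem~\ref{thm:nontorsion}, exclude $\dim T=1$ by Lemma~\ref{lem:simple31} and $\dim T=2$ by finiteness of $\psi_2$, and conclude with Lang--N\'eron. The step you flag as the main obstacle is handled in the paper without any base change, by Poincar\'e reducibility: there is $\pi$ with $\pi\circ\tau=[n]$, and extending both maps near $b_0$ by the N\'eron mapping property shows that $\tau_{b_0}$ has finite kernel. For $\dim T=2$ the paper also spells out the passage you compress: the kernel of $\tau$ is constant, hence so are the abelian surface and its principal polarization, and Torelli then forces the moduli map to be constant.
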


\begin{proof}
Let $b_0$ be a point over $e$. The fibre at $b_0$ is $(C_0,J_0)$ and $\sigma_\beta(b_0)=D$ has infinite order by Theorem~\ref{thm:nontorsion}, so $\sigma_\beta$ has infinite order. Let $(T,\tau)$ be the trace, with $\tau\colon T_{\mathcal K}\to\Jfam_{\beta,\eta}$ of finite kernel. By Poincar\'e reducibility there is $\pi\colon\Jfam_{\beta,\eta}\to T_{\mathcal K}$ with $\pi\circ\tau=[n]$ for some $n\ge1$. By the N\'eron mapping property both maps extend over a neighbourhood of $b_0$, and the identity $\pi\circ\tau=[n]$ persists. At $b_0$ we obtain $\tau_{b_0}\colon T\to J_0$ with finite kernel, so $J_0$ contains an abelian subvariety isogenous to $T$. If $\dim T=1$, this contradicts Lemma~\ref{lem:simple31}. If $\dim T=2$, then $\tau$ is an isogeny whose kernel is a finite subgroup of the constant abelian variety $T_{\mathcal K}$, hence constant, so $\Jfam_{\beta,\eta}$ is isomorphic to a constant abelian surface; its principal polarization is constant as well, because the N\'eron--Severi group of a constant abelian variety does not change under the extension $\mathcal K/\Qbar$. By Torelli the curves $C_{\beta(b)}$ would then be isomorphic over $\overline{\mathcal K}$, and the moduli map $\psi_2\circ\beta$ would be constant. This is impossible, because $\psi_2$ is finite (Theorem~\ref{thm:quadratic}) and $\beta$ is nonconstant. Hence $T=0$. By the Lang--N\'eron theorem the N\'eron--Tate height over $\mathcal K$ is positive definite modulo torsion when the trace vanishes \cite{Lang}, so $\delta_\beta>0$.
\end{proof}

By \cite{Yang} the number $\delta_\beta$ is rational.

\subsection{The general comparison}
With trivial trace in hand, the specialization theorem identifies the N\'eron--Tate height of the section with a height on the base curve, and the comparison reduces to comparing two classes in $\Pic(B)\otimes\Q$.

\begin{theorem}\label{thm:general}
In the setting above, let $s_\beta=\delta_\beta/(2d_\beta)$. There is a degree-zero class $\nu_\beta\in\Pic^0(B_{\Qbar})\otimes\Q$ such that for all $b\in B^\circ(\Qbar)$
\begin{equation}\label{eq:general}
\begin{aligned}
\hhat_{\Jfam_{\beta,b}}(\sigma_\beta(b))&=s_\beta\,h_{\inv}(\psi_2(\beta(b)))+h_{\nu_\beta}(b)+O_\beta(1)\\
&=s_\beta\,h_{\inv}(\psi_2(\beta(b)))+O_\beta\Bigl(\sqrt{h_{\inv}(\psi_2(\beta(b)))+1}\Bigr).
\end{aligned}
\end{equation}
If $B$ has genus zero, then $\nu_\beta=0$ and the error is $O_\beta(1)$. For every $n\ge1$ the set of $b\in B^\circ(\Qbar)$ with $[k(b):k]\le n$ and $\sigma_\beta(b)$ torsion is finite.
\end{theorem}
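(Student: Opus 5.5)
The plan is to apply the specialization theorem of Silverman, Tate and Carney to the family $\Jfam_\beta\to B^\circ$ and then compare two divisor classes on $B$.

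\textbf{Step 1: the N\'eron--Tate height as a height on $B$.} By Proposition~\ref{prop:trace} the generic fibre has trivial $\mathcal K/\Qbar$-trace and $\sigma_\beta$ has infinite order with $\delta_\beta>0$. Silverman's specialization theorem gives
\[
\hhat_{\Jfam_{\beta,b}}(\sigma_\beta(b))=\delta_\beta\,h_{\OO_B(1)}(b)+o(h(b))
\]
for any degree-one divisor $\OO_B(1)$ on $B$. I will use the sharper form in which the error term is a Weil height attached to a $\Q$-divisor class. Tate's refinement for one-dimensional bases says that $b\mapsto\hhat_{\Jfam_{\beta,b}}(\sigma_\beta(b))$ is a Weil height on $B$ for a class $\mathcal D_\beta\in\Pic(B)\otimes\Q$ of degree $\delta_\beta$, up to $O(1)$ on $B^\circ(\Qbar)$. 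Tate proved this for elliptic surfaces, and Call and Silverman extended it to abelian schemes over curves; Carney's work supplies the version for a family defined only over $B^\circ$ with the polarization $\Theta_W$.

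\textbf{Step 2: the moduli height as a height on $B$.} By Theorem~\ref{thm:quadratic}, $\psi_2^*\Hq\simeq\OO_{\PP^2}(120)$. Together with $60\,h_{\inv}=h_\Hq+O(1)$ from \eqref{eq:Q} this gives
\[
h_{\inv}(\psi_2(\beta(b)))=2\,h_{\beta^*\OO(1)}(b)+O(1),
\]
and $\beta^*\OO(1)$ has degree $d_\beta$. Now set
\[
\nu_\beta=\mathcal D_\beta-\frac{\delta_\beta}{2d_\beta}\cdot 2\,\beta^*\OO(1)=\mathcal D_\beta-\frac{\delta_\beta}{d_\beta}\,\beta^*\OO(1).
\]
This class has degree zero, and the first line of \eqref{eq:general} follows with $s_\beta=\delta_\beta/(2d_\beta)$.

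\textbf{Step 3: bounding the degree-zero term.} On a curve, a height for a degree-zero class satisfies $h_{\nu}=O(\sqrt{h_H+1})$ for any ample $H$. This is the standard consequence of the quadratic--linear form of heights on $\Jac(B)$: pull back via an Abel--Jacobi map and apply Cauchy--Schwarz. Taking $H=\beta^*\OO(1)$, which is ample because $\beta$ is nonconstant, and using Step 2 gives the second line. If $B$ has genus zero, then $\Pic^0(B_{\Qbar})=0$, so $\nu_\beta=0$ and the error is $O(1)$.

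\textbf{Step 4: finiteness of torsion specializations.} Suppose $\sigma_\beta(b)$ is torsion. Then its N\'eron--Tate height is $0$, so by the first line
\[
s_\beta\,h_{\inv}\le|h_{\nu_\beta}|+O(1)=O\bigl(\sqrt{h_{\inv}+1}\bigr),
\]
which bounds $h_{\inv}(\psi_2(\beta(b)))$. Hence $h_{\beta^*\OO(1)}(b)$ is bounded, and Northcott's theorem for the ample class $\beta^*\OO(1)$ leaves finitely many $b$ of degree at most $n$.

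\textbf{Main obstacle.} The hard step is the $O(1)$ (rather than $o(h)$) form of the specialization identity in Step 1. It requires extending $\Jfam_\beta$ to a N\'eron model over all of $B$, where the bad fibres lie over $\beta^{-1}(Z)$. It also requires checking that the local correction terms at those bad points, and at the places of $k$, are bounded. I would first try to cite Tate's theorem as generalized by Call and Silverman and Carney directly, verifying only that its hypotheses hold: trivial trace, and the symmetric polarization $\Theta_W$ coming from the rational Weierstrass point $W$.
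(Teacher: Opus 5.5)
Your proposal is correct and follows the paper's proof essentially step for step. Carney's theorem, applied via Proposition~\ref{prop:trace}, realizes $b\mapsto\hhat_{\Jfam_{\beta,b}}(\sigma_\beta(b))$ as a height for a class $\mathcal D_\beta$ of degree $\delta_\beta$; you then split off $\frac{\delta_\beta}{d_\beta}\beta^*\OO(1)$ using $h_{\inv}\circ\psi_2\circ\beta=2h_{\beta^*\OO(1)}+O(1)$ and bound the degree-zero remainder by a square root, which vanishes in genus zero. The only cosmetic difference is in the torsion statement: the paper uses directly that $h_{\overline{\mathcal D}_\beta}(b)=0$ with $\mathcal D_\beta$ ample, whereas you bound $h_{\inv}$ through the square-root error, and both arguments then conclude with Northcott in the same way.
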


\begin{proof}
By Carney's theorem \cite[Theorem~1.1]{Carney}, which extends Tate's theorem for elliptic surfaces \cite{Tate} and refines Silverman's specialization theorem \cite{SilvermanSpec}, there is a nef adelic $\Q$-line bundle $\overline{\mathcal D}_\beta$ on $B$ with ample underlying class $\mathcal D_\beta$ of degree $\delta_\beta$ such that $\hhat_{\Jfam_{\beta,b}}(\sigma_\beta(b))=h_{\overline{\mathcal D}_\beta}(b)$ for $b\in B^\circ(\Qbar)$; here we use Proposition~\ref{prop:trace}. On the other hand $(\psi_2\circ\beta)^*\Hq\simeq\beta^*\OO(120)$, so $h_{\inv}(\psi_2(\beta(b)))=2h_{\beta^*\OO(1)}(b)+O(1)$. Write $\mathcal D_\beta=\frac{\delta_\beta}{d_\beta}\beta^*\OO(1)+\nu_\beta$ with $\nu_\beta$ of degree zero; changing the adelic metric changes heights by $O(1)$, which gives the first equality. Heights attached to algebraically trivial classes satisfy $h_{\nu_\beta}=O(\sqrt{h_{\beta^*\OO(1)}}+1)$ \cite{HindrySilverman,BombieriGubler}, which gives the second. If $B\simeq\PP^1$ then $\Pic^0(B)=0$. Finally, if $\sigma_\beta(b)$ is torsion then $h_{\overline{\mathcal D}_\beta}(b)=0$; since $\mathcal D_\beta$ is ample, such $b$ have bounded height, and Northcott's theorem applies.
\end{proof}

The square-root error in \eqref{eq:general} comes only from the degree-zero class $\nu_\beta$; it can be replaced by $O_\beta(1)$ whenever $\nu_\beta$ is torsion. The slope itself depends only on the curve traced out in the parameter plane.

\begin{proposition}\label{prop:imageonly}
If $\gamma\colon B'\to B$ is a finite morphism of degree $r$, then $\delta_{\beta\circ\gamma}=r\,\delta_\beta$ and $d_{\beta\circ\gamma}=r\,d_\beta$. In particular $s_\beta$ depends only on the curve $\beta(B)\subset\PP^2$.
\end{proposition}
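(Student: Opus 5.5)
The plan is to prove the two scaling identities separately and then take the ratio. For $d_\beta$ this is just the projection formula on curves. Since $\gamma$ is finite of degree $r$, the pullback $\gamma^*$ multiplies degrees of line bundles by $r$, so
\[
d_{\beta\circ\gamma}=\deg\gamma^*\beta^*\OO(1)=r\deg\beta^*\OO(1)=r\,d_\beta.
\]
Note also that $\beta\circ\gamma$ is nonconstant and its image contains $e$, because $\gamma$ is surjective. Hence $\beta\circ\gamma$ satisfies the standing hypotheses, and $\delta_{\beta\circ\gamma}$ is defined by Proposition~\ref{prop:trace}.

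For $\delta$, put $\mathcal K=\Qbar(B)$ and $\mathcal K'=\Qbar(B')$, so that $\gamma$ gives a finite extension $\mathcal K'/\mathcal K$ of degree $r$. The family over $B'^\circ=\gamma^{-1}(B^\circ)$ is the pullback of the family over $B^\circ$. Thus $\Jfam_{\beta\circ\gamma,\eta'}=\Jfam_{\beta,\eta}\otimes_{\mathcal K}\mathcal K'$, with the same polarization $\Theta_W$ and section $\sigma_{\beta\circ\gamma}=\sigma_\beta$ viewed over $\mathcal K'$. The key step is the standard base-change behaviour of the function-field N\'eron--Tate height when it is normalized by degrees on the base curve and is not divided by $[\mathcal K':\mathcal K]$. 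A Weil height for $\Theta_W$ over $\mathcal K$ is a sum of local terms over points of $B$. Each place $x\in B$ splits into the points $y\in\gamma^{-1}(x)$, whose ramification indices sum to $r$ (equivalently $\sum_{y\mid x}[\mathcal K'_y:\mathcal K_x]=r$). So for $P\in\Jfam_{\beta,\eta}(\mathcal K)$ the unnormalized height over $\mathcal K'$ is $r$ times that over $\mathcal K$. This scaling persists under Tate's limit, which gives $\hhat_{\mathcal K'}(\sigma_\beta)=r\,\hhat_{\mathcal K}(\sigma_\beta)$, that is, $\delta_{\beta\circ\gamma}=r\,\delta_\beta$.

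An alternative route is through Carney's description used in Theorem~\ref{thm:general}: $\delta_\beta=\deg\mathcal D_\beta$. Since $\gamma^*\overline{\mathcal D}_\beta$ computes the specialized heights $\hhat(\sigma(\beta(\gamma(b'))))$ on $B'^\circ$, uniqueness of the class in Carney's theorem gives $\mathcal D_{\beta\circ\gamma}=\gamma^*\mathcal D_\beta$, which again has degree $r\delta_\beta$. I would present the first argument and mention the second as a check. The main point to be careful about is exactly this normalization convention, namely that $\delta$ is the degree-normalized, not field-degree-normalized, height; the geometric content is trivial.

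Finally, it remains to deduce that $s_\beta$ depends only on the image curve. Let $\beta_1\colon B_1\to\PP^2$ and $\beta_2\colon B_2\to\PP^2$ have the same image $\Gamma$, and let $\tilde\Gamma$ be the normalization of $\Gamma$. Each $\beta_i$ factors as $B_i\to\tilde\Gamma\to\PP^2$, since $B_i$ is smooth and $\beta_i$ is dominant onto $\Gamma$, and the map $B_i\to\tilde\Gamma$ is finite. Applying the two identities gives $s_{\beta_i}=\delta_{\tilde\beta}/(2d_{\tilde\beta})$ for $\tilde\beta\colon\tilde\Gamma\to\PP^2$, and the ratio is unchanged under composition.
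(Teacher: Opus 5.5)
Your proposal is correct and follows the paper's proof. Under the degree-$r$ extension $\Qbar(B')/\Qbar(B)$, both the degree-normalized function-field N\'eron--Tate height and $\deg\beta^*\OO(1)$ are multiplied by $r$, and factoring every $\beta$ through the normalization of $\beta(B)$ gives the image-only statement; your local splitting argument and the cross-check through Carney's class $\mathcal D_\beta$ just spell out what the paper states in one line.
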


\begin{proof}
Under the finite extension $\Qbar(B')/\Qbar(B)$ of degree $r$, function-field heights normalized by degrees on the base curves are multiplied by $r$, and so is $\deg\beta^*\OO(1)$. Every $\beta$ factors through the normalization of its image.
\end{proof}

\subsection{Lines through the base point}
The simplest curves through $e$ are lines. For them the base is $\PP^1$, there are no degree-zero classes, and the comparison holds with a bounded error. For $(b_1:c_1)\in\PP^1(\Q)$ let $\beta_{b_1,c_1}\colon\PP^1\to\PP^2$, $(s:t)\mapsto(s:b_1t:c_1t)$. These are the lines through $e$, with $d_\beta=1$.

\begin{corollary}\label{cor:lines}
For each line through $e$ there is $\delta>0$ with
\[
\hhat_J(\sigma(s:t))=\delta\,h(s:t)+O(1)=\tfrac\delta2\,h_{\inv}(\psi_2(s:b_1t:c_1t))+O(1)
\]
for $(s:t)\in B^\circ(\Qbar)$. Only finitely many rational parameters $(s:t)$ give a torsion class.
\end{corollary}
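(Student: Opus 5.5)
The plan is to specialize Theorem~\ref{thm:general} to $B=\PP^1$ and $\beta=\beta_{b_1,c_1}$. First we verify the hypotheses. The map $(s:t)\mapsto(s:b_1t:c_1t)$ is a linear embedding of $\PP^1$ onto the line through $e=(1:0:0)$ and $(0:b_1:c_1)$. Hence $\beta^*\OO(1)\simeq\OO_{\PP^1}(1)$ and $d_\beta=1$. The point $(1:0)$ maps to $e$, so the image contains the base point. Proposition~\ref{prop:trace} then gives trivial trace, infinite order of $\sigma_\beta$, and $\delta:=\delta_\beta>0$.

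Since $B$ has genus zero, $\Pic^0(B)=0$. Theorem~\ref{thm:general} therefore yields $\hhat_J(\sigma(\beta(b)))=\frac{\delta}{2}h_{\inv}(\psi_2(\beta(b)))+O(1)$ on $B^\circ(\Qbar)$, because $s_\beta=\delta/2$.

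For the first equality we need $h_{\inv}\circ\psi_2\circ\beta=2h+O(1)$ on $\PP^1$. This follows from $(\psi_2\circ\beta)^*\Hq\simeq\beta^*\OO(120)=\OO_{\PP^1}(120)$, using Theorem~\ref{thm:quadratic}, together with $60h_{\inv}=h_{\Hq}+O(1)$ and functoriality of Weil heights. The resulting identity is $60h_{\inv}=120h+O(1)$. Substituting gives $\hhat_J(\sigma)=\delta h(s:t)+O(1)$.

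For the torsion statement, apply the finiteness clause of Theorem~\ref{thm:general} with $n=1$, noting that the line is defined over $\Q$ when $(b_1:c_1)\in\PP^1(\Q)$. Alternatively, if $\sigma(s:t)$ is torsion, its height vanishes, so $h(s:t)\le C/\delta$, and Northcott's theorem on $\PP^1(\Q)$ finishes the argument.

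There is no substantial obstacle here. The only point needing care is that $B^\circ$ is nonempty and the formula is applied only there. This holds because $B^\circ$ contains the point over $e$, where $a\ne0$ and $\Disc(f_0)\ne0$.
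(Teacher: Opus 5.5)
Your proposal is correct and follows the paper's argument: you specialize Theorem~\ref{thm:general} to $B=\PP^1$ with $d_\beta=1$, so that $\nu_\beta=0$ and $s_\beta=\delta/2$, and you convert $h_{\inv}$ to $h$ via $(\psi_2\circ\beta)^*\Hq\simeq\OO_{\PP^1}(120)$, which gives $h_{\inv}\circ\psi_2\circ\beta=2h+O(1)$. The finiteness of torsion specializations is the Northcott clause of that theorem with $n=1$, which you also invoke.
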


\begin{proof}
This is Theorem~\ref{thm:general} with $B=\PP^1$, combined with $h_{\inv}(\psi_2(\beta(t)))=2h(t)+O(1)$.
\end{proof}

\begin{proof}[Proof of Corollary~\ref{cor:E}]
The line $c=0$ with the parameter $u=b/a$ gives $F^{(2)}_{1,u,0}(x,1)=f_0(x)+ux+u^2(x^5+x)$, that is, the curves $C_u$. Corollary~\ref{cor:lines} gives the height formula and the finiteness of $\Sigma$, which also contains the finitely many $u$ with singular fibre. For $u=0$ the class has infinite order by Theorem~\ref{thm:nontorsion}. For $u$ in $\{\pm1,\pm2,\pm3,\pm\frac13,\frac12,\frac23,\frac32\}$ the primitive sextic has nonzero discriminant, $C_u$ has good reduction at the primes listed below, and Lemma~\ref{lem:criteria}(2) applies with the following group orders:
\[
\begin{array}{c|cccccc}
u&1&-1&2&-2&3&-3\\\hline
p&3,\,5&7,\,11&7,\,29&3,\,5&5,\,13&13,\,17\\
\#\Jac(C_u)(\F_p)&16,\,55&65,\,116&42,\,695&16,\,25&25,\,172&174,\,235
\end{array}
\]
\[
\begin{array}{c|ccccc}
u&\frac13&-\frac13&\frac12&\frac23&\frac32\\[2pt]\hline
p&3,\,7&3,\,5&5,\,29&3,\,13&7,\,11\\
\#\Jac(C_u)(\F_p)&12,\,25&12,\,25&25,\,764&12,\,259&25,\,136
\end{array}
\]
In each case $\Jac(C_u)(\Q)$ is torsion-free and $[P_\infty^+-W]\ne0$. Finally, $h_{\inv}([C_u])=2h(u)+O(1)$, so the inequality $\exp h_{\inv}([C_u])\le Y$ holds for every $u$ with $h(u)\le\frac12\log Y-c_1$ and only for $u$ with $h(u)\le\frac12\log Y+c_2$. By Schanuel's theorem \cite{Schanuel} the number of such $u\in\Q$ is $\asymp Y$, and the finite map $u\mapsto\psi_2(1:u:0)$ has fibres of bounded size.
\end{proof}

\subsection{Elliptic curves in \texorpdfstring{$E\times E$}{E x E}}
For curves of positive genus a linear term may appear. The most natural examples are the elliptic curves through the origin of $E\times E$, where the linear term can be written with the N\'eron--Tate pairing on $E$. For $(u,v)\in\Z^2\setminus\{(0,0)\}$ let $\alpha_{u,v}\colon E\to A$, $T\mapsto([u]T,[v]T)$, and $\beta_{u,v}=\phi\circ\alpha_{u,v}$. Then $\beta_{u,v}(O)=e$ and $d_{\beta_{u,v}}=\deg\alpha_{u,v}^*L=3(u^2+v^2)$. Write $U_{u,v}=B^\circ$, $\delta_{u,v}=\delta_{\beta_{u,v}}$ and $\hhat_J(\sigma_{u,v}(T))$ for the height of the specialized section.

\begin{theorem}\label{thm:ray}
For every $(u,v)\ne(0,0)$ there are $\delta_{u,v}>0$ and a unique $R_{u,v}\in E(\Q)\otimes\Q$ such that, uniformly for $T\in U_{u,v}(\Qbar)$,
\begin{align}
\hhat_J(\sigma_{u,v}(T))&=\delta_{u,v}\hhat_E(T)+2\langle T,R_{u,v}\rangle_E+O_{u,v}(1),\label{eq:ray-nt}\\
h_{\inv}(\Phi_2(\alpha_{u,v}(T)))&=6(u^2+v^2)\hhat_E(T)+O_{u,v}(1),\label{eq:ray-git}
\end{align}
and
\begin{multline}\label{eq:ray-ratio}
\hhat_J(\sigma_{u,v}(T))=\frac{\delta_{u,v}}{6(u^2+v^2)}\,h_{\inv}(\Phi_2(\alpha_{u,v}(T)))\\
+O_{u,v}\Bigl(\sqrt{h_{\inv}(\Phi_2(\alpha_{u,v}(T)))+1}\Bigr).
\end{multline}
Moreover $(\Phi_2\circ\alpha_{u,v})^*\Hq\simeq\OO_E(360(u^2+v^2)O)$.
\end{theorem}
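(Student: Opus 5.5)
We specialize Theorem~\ref{thm:general} to $B=E$ and $\beta=\beta_{u,v}=\phi\circ\alpha_{u,v}$, and then make everything explicit using the group structure of $E$. The curve $E$ is smooth, projective and geometrically connected over $\Q$. The map $\beta_{u,v}$ is nonconstant because $\alpha_{u,v}$ is a nonzero homomorphism and $\phi$ is finite (Proposition~\ref{prop:net}). Its image contains $e$, since $\beta_{u,v}(O)=\phi(O,O)=e$.

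We begin with the pullback statement and \eqref{eq:ray-git}. We have $\alpha_{u,v}^*L=[u]^*\OO_E(3O)\otimes[v]^*\OO_E(3O)$. Because $\OO_E(O)$ is symmetric, $[n]^*\OO_E(O)$ is algebraically equivalent to $\OO_E(n^2O)$. It is in fact linearly equivalent to it: $[n]^*(O)$ is the divisor of $n$-torsion points, whose sum is $O$, so Abel's theorem gives $[n]^*(O)\sim n^2(O)$. Hence $\alpha_{u,v}^*L\simeq\OO_E(3(u^2+v^2)O)$, which gives $d_{\beta_{u,v}}=3(u^2+v^2)$. Theorem~\ref{thm:quadratic} gives $\Phi_2^*\Hq\simeq L^{\otimes120}$, and therefore $(\Phi_2\circ\alpha_{u,v})^*\Hq\simeq\OO_E(360(u^2+v^2)O)$. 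Since $60\,h_{\inv}=h_{\Hq}+O(1)$ and $h_{\OO_E(O)}=\hhat_E+O(1)$, we obtain \eqref{eq:ray-git}. An alternative route is \eqref{eq:quad-height} together with $\hhat_E([n]T)=n^2\hhat_E(T)$.

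Next we derive \eqref{eq:ray-nt} from the first line of \eqref{eq:general}. Proposition~\ref{prop:trace} applies because $O\in U_{u,v}$ and the fibre there is $C_0$. Theorem~\ref{thm:general} then gives $\hhat_J(\sigma_{u,v}(T))=h_{\overline{\mathcal D}}(T)$, where $\mathcal D$ is a $\Q$-divisor class on $E$ of degree $\delta_{u,v}$. We write $\mathcal D=\delta_{u,v}(O)+\nu$ with $\nu\in\Pic^0(E_{\Qbar})\otimes\Q$. Under the usual identification, $\nu$ corresponds to the class of $(Q)-(O)$ for some $Q\in E(\Qbar)\otimes\Q$. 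The standard formula $h_{(Q)-(O)}(T)=-2\langle T,Q\rangle_E+O(1)$ (compare Example~\ref{ex:linear}) then lets us write $h_\nu(T)=2\langle T,R_{u,v}\rangle_E+O(1)$ with $R_{u,v}=-Q$.

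To see that $R_{u,v}$ is rational, note that everything is defined over $\Q$: the family, $\beta_{u,v}$ and $\sigma$. The specialization class is therefore Galois-invariant, so $\nu$ lies in $\Pic^0(E)\otimes\Q$ over $\Q$, because the equality of heights determines the class of $\mathcal D$. This gives $R_{u,v}\in E(\Q)\otimes\Q$. Uniqueness holds because the pairing is nondegenerate modulo torsion on $E(\Qbar)$: a linear function $T\mapsto 2\langle T,R\rangle_E$ that is bounded must vanish, by scaling $T\mapsto[n]T$.

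The point needing the most care is that the Carney height $h_{\overline{\mathcal D}}$ determines $\mathcal D$ in $\Pic(E)\otimes\Q$ only up to classes whose heights are bounded. On an elliptic curve, a degree-zero class with bounded height is torsion, so $\mathcal D$ is determined in $\Pic(E)\otimes\Q$ and the argument above goes through.

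Finally, \eqref{eq:ray-ratio} follows from \eqref{eq:ray-nt} and \eqref{eq:ray-git}. We substitute $\hhat_E(T)=h_{\inv}/(6(u^2+v^2))+O(1)$, and we bound the linear term by Cauchy--Schwarz for the positive semidefinite pairing: $|\langle T,R\rangle_E|\le\hhat_E(T)^{1/2}\hhat_E(R)^{1/2}$, which is $O(\sqrt{h_{\inv}+1})$. This is also the second line of \eqref{eq:general} with $s_\beta=\delta_{u,v}/(2d_\beta)=\delta_{u,v}/(6(u^2+v^2))$, which gives a consistency check on the slope.
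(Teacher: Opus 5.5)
Your proposal is correct and follows the paper's proof essentially step by step: the pullback computation giving \eqref{eq:ray-git}, the decomposition $\mathcal D_\beta=\delta_{u,v}(O)+\nu$ from Theorem~\ref{thm:general} with $h_{(Q)-(O)}=-2\langle\cdot,Q\rangle_E+O(1)$, Galois invariance of the left side for rationality of $R_{u,v}$, and Cauchy--Schwarz for \eqref{eq:ray-ratio}. The differences are cosmetic. You use Abel's theorem where the paper uses $[n]^*N\simeq N^{\otimes n^2}$ for symmetric $N$. You also deduce rationality from the fact that the height determines the class in $\Pic(E_{\Qbar})\otimes\Q$ rather than from the uniqueness of $R_{u,v}$, which amounts to the same argument.
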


\begin{proof}
Since $L$ is symmetric, $[u]^*\OO_E(3O)\simeq\OO_E(3u^2O)$, so $\alpha_{u,v}^*L\simeq\OO_E(3(u^2+v^2)O)$ and $(\Phi_2\circ\alpha_{u,v})^*\Hq\simeq\alpha_{u,v}^*L^{\otimes120}\simeq\OO_E(360(u^2+v^2)O)$; dividing by $60$ gives \eqref{eq:ray-git}. In Theorem~\ref{thm:general} write the class of $\mathcal D_\beta$ in $\Pic(E_{\Qbar})\otimes\Q$ as $\delta_{u,v}[O]+[R-O]$ with $R\in E(\Qbar)\otimes\Q$. We have $h_{[O]}=\hhat_E+O(1)$ and $h_{[R]-[O]}(T)=-2\langle T,R\rangle_E+O(1)$, and we put $R_{u,v}=-R$; this gives \eqref{eq:ray-nt}. If $\delta'\hhat_E+2\langle\cdot,R'\rangle_E$ also satisfies \eqref{eq:ray-nt}, then evaluating at $[n]T_1$ for a point $T_1$ of infinite order shows $\delta'=\delta_{u,v}$ and $\langle T_1,R'-R_{u,v}\rangle_E=0$ for all such $T_1$, so $R'=R_{u,v}$ since the pairing is positive definite on $E(F)\otimes\R$ for every number field $F$. The family is defined over $\Q$, so the left side of \eqref{eq:ray-nt} is Galois invariant, and uniqueness gives $R_{u,v}^\tau=R_{u,v}$ for all $\tau\in\Gal(\Qbar/\Q)$; hence $R_{u,v}\in E(\Q)\otimes\Q$. Finally $|2\langle T,R_{u,v}\rangle_E|\le2\sqrt{\hhat_E(T)\hhat_E(R_{u,v})}$, and \eqref{eq:ray-git} turns this into \eqref{eq:ray-ratio}.
\end{proof}

\begin{proposition}\label{prop:symmetries}
For $(u,v)\ne(0,0)$ and $d\in\Z\setminus\{0\}$,
\[
\delta_{du,dv}=d^2\delta_{u,v},\quad R_{du,dv}=d\,R_{u,v},\quad\delta_{v,u}=\delta_{u,v},\quad R_{v,u}=R_{u,v}.
\]
Hence the slope $s(u:v)=\delta_{u,v}/(6(u^2+v^2))$ is a well-defined function on $\PP^1(\Q)$ with $s(v:u)=s(u:v)$.
\end{proposition}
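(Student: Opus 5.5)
The plan is to derive both symmetries from the uniqueness statement in Theorem~\ref{thm:ray}. We exhibit, for the pulled-back family, an expansion of the form \eqref{eq:ray-nt} with explicit coefficients, and then read off $\delta$ and $R$ from uniqueness.

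\emph{Scaling.} We have $\alpha_{du,dv}=\alpha_{u,v}\circ[d]$, hence $\beta_{du,dv}=\beta_{u,v}\circ[d]$. Also $T\in U_{du,dv}(\Qbar)$ exactly when $[d]T\in U_{u,v}(\Qbar)$, since both conditions say that $\beta_{u,v}([d]T)\notin Z$. For such $T$ the specialized fibres coincide: the family over $E$ via $\beta_{du,dv}$ at $T$ is literally the family via $\beta_{u,v}$ at $[d]T$, with the same section $\sigma$. So $\hhat_J(\sigma_{du,dv}(T))=\hhat_J(\sigma_{u,v}([d]T))$. Applying \eqref{eq:ray-nt} for $(u,v)$ at $[d]T$ gives
\[
\hhat_J(\sigma_{du,dv}(T))=d^2\delta_{u,v}\hhat_E(T)+2\langle T,dR_{u,v}\rangle_E+O(1),
\]
using $\hhat_E([d]T)=d^2\hhat_E(T)$ and $\langle[d]T,R\rangle_E=\langle T,dR\rangle_E$. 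This is uniform on $U_{du,dv}(\Qbar)$. Uniqueness in Theorem~\ref{thm:ray} for $(du,dv)$ then yields $\delta_{du,dv}=d^2\delta_{u,v}$ and $R_{du,dv}=dR_{u,v}$. As a consistency check, this matches Proposition~\ref{prop:imageonly} with $r=\deg[d]=d^2$.

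\emph{Swap.} Let $s$ be the involution of Proposition~\ref{prop:net}. Then $s\circ\alpha_{u,v}=\alpha_{v,u}$, and $\phi\circ s=\phi$ gives $\beta_{v,u}=\beta_{u,v}$. So the two families over $E$, the sets $U_{v,u}=U_{u,v}$, and the specialized sections are identical. Hence $\hhat_J(\sigma_{v,u}(T))=\hhat_J(\sigma_{u,v}(T))$, and uniqueness gives $\delta_{v,u}=\delta_{u,v}$ and $R_{v,u}=R_{u,v}$. Alternatively, one can note that $\delta_\beta$ depends only on $\beta$ by its definition in Proposition~\ref{prop:trace}.

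\emph{Slope.} We have
\[
s(du:dv)=\frac{d^2\delta_{u,v}}{6d^2(u^2+v^2)}=s(u:v).
\]
The value $d=-1$ is included, and every point of $\PP^1(\Q)$ has integer representatives differing by such scalars. Indeed, any two primitive representatives differ by $\pm1$, and non-primitive ones are integer multiples of a primitive one. So $s$ is well defined, and the swap identity gives $s(v:u)=s(u:v)$.

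The only delicate point is the scaling step, where one must check that the domain of uniformity matches. The expansion for $(u,v)$ is applied at $[d]T$, and this point lies in $U_{u,v}$ precisely when $T\in U_{du,dv}$. One must also use that the $O(1)$ term is uniform on that set, which Theorem~\ref{thm:ray} provides. Since $U_{du,dv}$ is cofinite in $E(\Qbar)$, it contains multiples $[n]T_1$ of points of infinite order, which is what the uniqueness argument requires.
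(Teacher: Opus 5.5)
Your proposal is correct and follows the paper's proof. The scaling identities come from $\alpha_{du,dv}=\alpha_{u,v}\circ[d]$, $U_{du,dv}=[d]^{-1}U_{u,v}$ and the uniqueness clause of Theorem~\ref{thm:ray}, and the swap identities come from $\phi\circ s=\phi$ and $\alpha_{v,u}=s\circ\alpha_{u,v}$; your remarks on cofiniteness of $U_{du,dv}$ and on integer representatives of points of $\PP^1(\Q)$ fill in details the paper leaves implicit.
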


\begin{proof}
Since $\alpha_{du,dv}=\alpha_{u,v}\circ[d]$, the fibre over $T$ for $(du,dv)$ is the fibre over $[d]T$ for $(u,v)$, and $U_{du,dv}=[d]^{-1}U_{u,v}$. Thus $\hhat_J(\sigma_{du,dv}(T))=\delta_{u,v}d^2\hhat_E(T)+2\langle T,dR_{u,v}\rangle_E+O(1)$, and uniqueness in Theorem~\ref{thm:ray} gives the first two identities. Since $\phi\circ s=\phi$ and $\alpha_{v,u}=s\circ\alpha_{u,v}$, the families for $(u,v)$ and $(v,u)$ coincide.
\end{proof}

\begin{theorem}\label{thm:canonical-identities}
Let $(u,v)\ne(0,0)$ and let $T\in E(\Qbar)$ have infinite order. Then $[n]T\in U_{u,v}$ for all but finitely many $n$, and
\begin{align*}
\lim_{|n|\to\infty}\frac{\hhat_J(\sigma_{u,v}([n]T))}{n^2}&=\delta_{u,v}\,\hhat_E(T),\\
\lim_{|n|\to\infty}\frac{h_{\inv}(\Phi_2(\alpha_{u,v}([n]T)))}{n^2}&=\hhat_{\Phi_2\circ\alpha_{u,v},\ww}(T)=6(u^2+v^2)\,\hhat_E(T),\\
\lim_{n\to\infty}\frac{\hhat_J(\sigma_{u,v}([n]T))-\hhat_J(\sigma_{u,v}([-n]T))}{4n}&=\langle T,R_{u,v}\rangle_E.
\end{align*}
In particular
\[
\lim_{|n|\to\infty}\frac{\hhat_J(\sigma_{u,v}([n]T))}{n^2}=s(u:v)\cdot\hhat_{\Phi_2\circ\alpha_{u,v},\ww}(T),
\]
so the slope is the ratio of the N\'eron--Tate height of the section to the weighted N\'eron--Tate height of the moduli map along the curve $\alpha_{u,v}(E)$.
\end{theorem}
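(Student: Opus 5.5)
The plan is to plug the multiples $[n]T$ into the two expansions of Theorem~\ref{thm:ray} and read off the leading and odd parts. The first task is to show that $[n]T\in U_{u,v}$ for almost all $n$. The complement of $U_{u,v}$ in $E$ is $\beta_{u,v}^{-1}(Z)$. This is a proper closed subset, because it misses $O$, which lies over $e$, and $C_0$ is smooth with $a=1$ there. So it is a finite set $S\subset E(\Qbar)$. If $[n]T=[n']T$ for some $n\ne n'$, then $T$ would be torsion. Hence each point of $S$ equals $[n]T$ for at most one $n$, and only finitely many $n$ are excluded.

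For the first limit, apply \eqref{eq:ray-nt} at $[n]T$. Using $\hhat_E([n]T)=n^2\hhat_E(T)$ and $\langle[n]T,R_{u,v}\rangle_E=n\langle T,R_{u,v}\rangle_E$, we get
\[
\hhat_J(\sigma_{u,v}([n]T))=n^2\delta_{u,v}\hhat_E(T)+2n\langle T,R_{u,v}\rangle_E+O_{u,v}(1).
\]
The error is uniform in $n$, since \eqref{eq:ray-nt} holds uniformly on $U_{u,v}(\Qbar)$. Dividing by $n^2$ and letting $|n|\to\infty$ gives the first limit.

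For the third limit, subtract the same expansion at $-n$. The quadratic terms cancel and we get $4n\langle T,R_{u,v}\rangle_E+O(1)$; dividing by $4n$ gives the limit.

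For the second limit, use \eqref{eq:ray-git}, which gives $h_{\inv}(\Phi_2(\alpha_{u,v}([n]T)))=6(u^2+v^2)n^2\hhat_E(T)+O(1)$. Dividing by $n^2$ gives the stated limit along all $|n|\to\infty$. It remains to identify this limit with the weighted N\'eron--Tate height. By Definition~\ref{def:wnt} applied to $f=\Phi_2\circ\alpha_{u,v}\colon E\to\PP(2,4,6,10)$, together with Theorem~\ref{thm:wnt}, the limit along $n=2^j$ exists and equals $\hhat_{f,\ww}(T)$. Since the full limit exists, the two agree. Alternatively, one can note directly that $f^*\Hq\simeq\OO_E(360(u^2+v^2)O)$ is symmetric. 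Theorem~\ref{thm:wnt} with $m=60$ then gives $\hhat_{f,\ww}=\frac1{60}\cdot360(u^2+v^2)\hhat_E=6(u^2+v^2)\hhat_E$, with no linear term.

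The final identity follows by dividing the first limit by the second. This uses $\hhat_E(T)>0$, which holds because $T$ has infinite order, and $s(u:v)=\delta_{u,v}/(6(u^2+v^2))$.

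I do not expect a serious obstacle. The only points needing care are the uniformity of the $O(1)$ terms in Theorem~\ref{thm:ray}, which is part of its statement, and the existence of the excluded set, which I handle as in the first paragraph.
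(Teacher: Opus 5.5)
Your proposal is correct and follows the paper's own argument. It uses the finiteness of $E\setminus U_{u,v}$ together with the infinite order of $T$, substitutes $[n]T$ into \eqref{eq:ray-nt} and \eqref{eq:ray-git} using their uniformity, and identifies the weighted N\'eron--Tate height via Theorem~\ref{thm:wnt} with $m=60$ (or via the subsequence $n=2^j$). The final identity is just $\delta_{u,v}\hhat_E(T)=s(u:v)\cdot6(u^2+v^2)\hhat_E(T)$, so positivity of $\hhat_E(T)$ is needed only to read the slope as a ratio.
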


\begin{proof}
The complement of $U_{u,v}$ is finite and $T$ has infinite order, so $[n]T$ avoids it for $|n|$ large. Substitute $[n]T$ in \eqref{eq:ray-nt} and \eqref{eq:ray-git}, and use Theorem~\ref{thm:wnt} with $m=60$ for the weighted N\'eron--Tate height of $\Phi_2\circ\alpha_{u,v}$.
\end{proof}

Along the congruence lattice of Theorem~\ref{thm:grid} the canonical identities become explicit expansions in $n$, valid for every $n$ and not only asymptotically.

\begin{corollary}\label{cor:ray-asymptotic}
Let $M=342$, $T_n=[Mn]P_0$, $\eta(n)=\hhat_J(\sigma_{u,v}(T_n))$ and $\mu(n)=h_{\inv}(\Phi_2(\alpha_{u,v}(T_n)))$. Every fibre is smooth and $\eta(n)>0$ for all $n\in\Z$. With $\kappa_{u,v}=2M\langle P_0,R_{u,v}\rangle_E$,
\begin{align*}
\eta(n)&=\delta_{u,v}M^2\hhat_E(P_0)n^2+\kappa_{u,v}n+O(1),\\
\mu(n)&=6(u^2+v^2)M^2\hhat_E(P_0)n^2+O(1),
\end{align*}
so that $\frac12(\eta(n)+\eta(-n))=s(u:v)\cdot\frac12(\mu(n)+\mu(-n))+O(1)$ and
\[
\#\{n\in\Z:\ \eta(n)\le X\}=\frac{2}{M\sqrt{\delta_{u,v}\hhat_E(P_0)}}\sqrt X+O(1).
\]
\end{corollary}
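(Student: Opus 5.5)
The corollary specializes Theorems~\ref{thm:ray} and~\ref{thm:grid} to the points $T_n=[Mn]P_0$, so I would obtain it by substituting and then counting. Smoothness of every fibre and positivity of $\eta(n)$ should follow from the congruence argument of Theorem~\ref{thm:grid}. We have $\alpha_{u,v}(T_n)=([uMn]P_0,[vMn]P_0)=R_{un,vn}$, which lies in the lattice of that theorem. Every such point reduces to $(O,O)$ modulo $13$ and $17$, because $\#E(\F_{13})=19$ and $\#E(\F_{17})=18$ both divide $M$. Hence the fibre has good reduction at $13$ and $17$, and the reduced section agrees with the class $D$ of Theorem~\ref{thm:nontorsion}. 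In particular $a\ne0$ and $\Disc\ne0$, since both are units at $13$, so $T_n\in U_{u,v}$ for every $n$. The section then has infinite order, and $\eta(n)>0$ because the N\'eron--Tate height attached to $\Theta_W$ is positive on non-torsion points. The case $n=0$ is the fibre $C_0$ itself.

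For the expansions I would substitute $T=T_n$ into \eqref{eq:ray-nt} and \eqref{eq:ray-git}; this is legitimate because those estimates are uniform on $U_{u,v}(\Qbar)$ and every $T_n$ lies there. Quadraticity gives $\hhat_E(T_n)=M^2n^2\hhat_E(P_0)$, and bilinearity gives $2\langle T_n,R_{u,v}\rangle_E=2Mn\langle P_0,R_{u,v}\rangle_E=\kappa_{u,v}n$. Averaging over $n$ and $-n$ cancels the linear term. Comparing the two $n^2$ coefficients gives the ratio $\delta_{u,v}/(6(u^2+v^2))=s(u:v)$, so the symmetrized identity holds with error $O(1)$.

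For the count, write $\eta(n)=An^2+\kappa n+O(1)$ with $A=\delta_{u,v}M^2\hhat_E(P_0)>0$. Completing the square, the condition $\eta(n)\le X$ is sandwiched between $|n+\kappa/(2A)|\le\sqrt{(X\pm C)/A}$ for a constant $C$. The number of integers in an interval of length $2\sqrt{(X\pm C)/A}$ is $2\sqrt{X/A}+O(1)$, since $\sqrt{X+C}-\sqrt X=O(1)$. This equals $\frac{2}{M\sqrt{\delta_{u,v}\hhat_E(P_0)}}\sqrt X+O(1)$, exactly as in the case $r=1$ of Theorem~\ref{thm:count}.

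The only step that needs any care is the first: checking that $T_n$ lies in $U_{u,v}$ for all $n$, not just for all but finitely many, so that the uniform expansions hold for every $n$. The reduction modulo $13$ argument handles this directly, since it makes both $a$ and $\Disc$ units at $13$. Everything else is formal substitution.
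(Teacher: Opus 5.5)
Your proposal is correct and follows the paper's proof. You identify $\alpha_{u,v}(T_n)$ with $R_{un,vn}$, so that Theorem~\ref{thm:grid} gives smoothness and positivity, then substitute into Theorem~\ref{thm:ray} using quadraticity and bilinearity, average over $\pm n$, and complete the square. You also check explicitly that $a$ and the discriminant are units at $13$, so that $T_n\in U_{u,v}$ for every $n$ and the uniform expansions apply to all $n$ and not just to $|n|$ large; the paper leaves this implicit in its appeal to Theorem~\ref{thm:grid}.
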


\begin{proof}
The point $([u]T_n,[v]T_n)=R_{un,vn}$ lies on the lattice of Theorem~\ref{thm:grid}, which gives smoothness and positivity. The expansions follow from Theorem~\ref{thm:ray} and bilinearity. Averaging over $\pm n$ removes the linear term, and completing the square in $\eta(n)\le X$ gives the count.
\end{proof}

\subsection{Comparison with GIT heights}
Finally, Theorem~\ref{thm:B} lets us replace the weighted moduli height by the GIT height in all of these statements, at the cost of an explicit constant in one direction.

\begin{corollary}\label{cor:git-ray}
For every curve $\beta$ as in Theorem~\ref{thm:general} and every $b\in B^\circ(\Qbar)$,
\[
\hhat_{\Jfam_{\beta,b}}(\sigma_\beta(b))\le s_\beta\,h_{\GIT}\bigl(F^{(2)}_{\beta(b)}\bigr)+O_\beta\Bigl(\sqrt{h_{\GIT}\bigl(F^{(2)}_{\beta(b)}\bigr)+1}\Bigr),
\]
with error $O_\beta(1)$ when $B$ has genus zero. With Remark~\ref{rem:reverse}, the N\'eron--Tate height is in fact $s_\beta h_{\GIT}+O(\sqrt{h_{\GIT}+1})$.
\end{corollary}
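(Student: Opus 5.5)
The plan is to combine Theorem~\ref{thm:general} with Theorem~\ref{thm:B}(1), then invert the inequality for the square-root error. Write $H=h_{\inv}(\psi_2(\beta(b)))$ and $G=h_{\GIT}(F^{(2)}_{\beta(b)})$. First I note that every $F^{(2)}_{\beta(b)}$ is GIT-semistable by Theorem~\ref{thm:quadratic}, so Theorem~\ref{thm:B}(1) applies. It gives $H\le G+\tfrac12\log302$. We also have $H\ge0$, since the weighted height is nonnegative: some coordinate is nonzero, and the product formula lets us normalize it to $1$.

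Next, Theorem~\ref{thm:general} gives
\[
\hhat_{\Jfam_{\beta,b}}(\sigma_\beta(b))\le s_\beta H+C_\beta\sqrt{H+1}.
\]
Because $s_\beta>0$ and $t\mapsto s_\beta t+C_\beta\sqrt{t+1}$ is increasing for $t\ge0$, I can substitute $H\le G+\tfrac12\log302$. This yields
\[
\hhat_{\Jfam_{\beta,b}}(\sigma_\beta(b))\le s_\beta G+\tfrac{s_\beta}{2}\log302+C_\beta\sqrt{G+1+\tfrac12\log302}.
\]
The constants are absorbed into $O_\beta\bigl(\sqrt{G+1}\bigr)$ once we know $G+1\gg1$. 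For that I use $G\ge H-\tfrac12\log302\ge-\tfrac12\log302$; to keep the square root well defined I rewrite the bound with $\sqrt{\max(G,0)+1}$, which is harmless. In genus zero the $\sqrt{H+1}$ term is $O_\beta(1)$ by Theorem~\ref{thm:general}, so the same substitution gives error $O_\beta(1)$.

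For the final equality I appeal to Remark~\ref{rem:reverse}, which gives the Burnol--Zhang bound $G\le H+C'$ on the semistable locus. Hence $|G-H|=O(1)$, so $s_\beta G=s_\beta H+O(1)$ and $\sqrt{H+1}\asymp\sqrt{G+1}$ up to constants. Substituting into \eqref{eq:general} gives the two-sided estimate.

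The main obstacle is this last step. The constant in Remark~\ref{rem:reverse} is non-explicit, and the remark states it for orbit-minimal norms defining an adelic metric. I would check that this uniform boundedness covers every semistable $F$ over $\Qbar$, not just stable ones, and that it applies with the maximum norm used in $h_{\GIT}$. If it does not apply directly, I would handle the discrepancy via Proposition~\ref{prop:metric} and the finitely many changed places noted in Remark~\ref{rem:reverse}.
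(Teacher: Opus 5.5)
Your proposal is correct and follows the paper's argument: the paper's proof is the single line combining \eqref{eq:general} with $h_{\inv}\le h_{\GIT}+\frac12\log302$ from Theorem~\ref{thm:B}. Your monotonicity step, and your replacement of $\sqrt{h_{\GIT}+1}$ by $\sqrt{\max(h_{\GIT},0)+1}$ (sensible, since a priori one only knows $h_{\GIT}\ge-\frac12\log302$), simply make that substitution precise. Your worry about the last sentence of the statement does not arise here. Every fibre over $B^\circ$ is smooth, so its sextic is squarefree and hence stable. Moreover, Remark~\ref{rem:reverse} already notes that changing the norm at finitely many places shifts $h_{\GIT}$ by only $O(1)$.
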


\begin{proof}
Combine \eqref{eq:general} with $h_{\inv}\le h_{\GIT}+\frac12\log302$ (Theorem~\ref{thm:B}).
\end{proof}

\section{Further questions}\label{sec:questions}

We close with some questions that the methods of this paper leave open.

\subsection*{The value of the slope}
For the line $c=0$ the family $C_u$ is a genus-two fibration over $\PP^1$ whose coefficients have degree at most two in $u$. Its slope $\delta/2$ can in principle be computed from the intersection theory of the minimal regular model, together with local corrections at the singular fibres, in the spirit of \cite{MS}. A numerical value would make Corollary~\ref{cor:E} fully explicit.

\subsection*{Variation of the slope}
Proposition~\ref{prop:imageonly} defines a function $\mathcal C\mapsto s(\mathcal C)$ on irreducible curves $\mathcal C\subset\PP^2$ through $e$. It would be interesting to know whether $s$ is bounded above and below by positive constants, and how it behaves on the lines through $e$ and on the curves $\phi(\alpha_{u,v}(E))$. An upper bound is related to uniform comparisons between canonical and naive heights in families \cite{CallSilverman}, and a lower bound to the height inequality of Gao and Habegger \cite{GaoHabegger} and of Dimitrov, Gao and Habegger \cite{DGH}, applied to the image of $\sigma$ over the two-dimensional base.

\subsection*{Two-dimensional specialization}
Theorem~\ref{thm:grid} gives an explicit infinite set of non-torsion specializations over the rank-two lattice $\Z P_0\times\Z P_0$, while Masser's results \cite{Masser} control exceptional specializations of finitely generated subgroups in density. A comparison $\hhat_J(\sigma(R))\asymp h_{\inv}(\Phi_2(R))$ on a dense open subset of the two-dimensional base would combine both.

\subsection*{Archimedean orbit minima}
Theorem~\ref{thm:B} bounds $h_{\inv}-h_{\GIT}$ from above. The opposite inequality depends on the archimedean minima $\rho_\infty(F)$, and explicit lower bounds for $\rho_\infty$ in terms of the invariants would give a fully effective comparison between weighted and GIT heights of binary sextics.

\appendix
\section{Computational certificates}\label{app:data}

This appendix lists the data behind the computational steps of the proofs. All computations are exact and were carried out with SymPy \cite{SymPy} and FLINT \cite{FLINT}; they take a few seconds, can be reproduced in any computer algebra system, and are collected in a script provided as supplementary material.

\subsection*{Invariants}
The transvectant formulas \eqref{eq:IC} applied to $f_0$ give
\begin{gather*}
I_2=794166=2\cdot3\cdot132361,\qquad I_4=19286078400,\\
I_6=4480732871164800,\qquad I_{10}=\Disc(f_0)=4130124925538304000000.
\end{gather*}
The same values are obtained from Igusa's root formulas, which confirms the normalization. For the linear family the discriminant is
\begin{equation}\label{eq:I10lin}
\begin{aligned}
-I_{10}={}&46656a^{10}-38880a^8b^2+38880a^8bc-202176a^8c^2\\
&+64800a^7b^2c-375840a^7bc^2+215568a^7c^3-540a^6b^4\\
&+109080a^6b^3c-343116a^6b^2c^2+529200a^6bc^3-48816a^6c^4\\
&+48600a^5b^4c-222480a^5b^3c^2+605376a^5b^2c^3-103176a^5bc^4\\
&+25668a^5c^5-6660a^4b^6+28980a^4b^5c-175221a^4b^4c^2\\
&+423270a^4b^3c^3-100611a^4b^2c^4-14238a^4bc^5-28081a^4c^6\\
&+8400a^3b^6c-89520a^3b^5c^2+185444a^3b^4c^3-84128a^3b^3c^4\\
&-24524a^3b^2c^5-10640a^3bc^6+7092a^3c^7-320a^2b^8\\
&+9280a^2b^7c-43256a^2b^6c^2+59960a^2b^5c^3-24450a^2b^4c^4\\
&-15372a^2b^3c^5+6568a^2b^2c^6+6790a^2bc^7-1536a^2c^8\\
&+3200ab^8c-12160ab^7c^2+12112ab^6c^3-496ab^5c^4\\
&-4452ab^4c^5+960ab^3c^6+1400ab^2c^7-596abc^8\\
&-92ac^9-256b^{10}+1280b^9c-1856b^8c^2\\
&+320b^7c^3+1052b^6c^4-332b^5c^5-357b^4c^6\\
&+126b^3c^7+23b^2c^8.
\end{aligned}
\end{equation}
For the quadratic family the invariants $I_2,I_4,I_6,I_{10}$ are forms of degrees $4,8,12,20$ in $(a,b,c)$ with $12,40,88,219$ monomials; for example
\begin{multline*}
I_2=2(397083a^4+380a^3b+312a^3c-11620a^2b^2-7288a^2bc\\
-7220a^2c^2+20ab^3-8abc^2-8ac^3+20b^4+12b^2c^2-8bc^3).
\end{multline*}

\subsection*{Semistability and bad primes}
The resultant of the two jet minors in Proposition~\ref{prop:linear} is $2^{45}3^{22}5^{16}\cdot761\cdot14447$; since only two of the four minors are used, this bound is weaker than Proposition~\ref{prop:linear-git}. For that proposition,
\begin{align*}
N_1&=2^{368}3^{58}5^{117}\cdot19\cdot821\cdot1031\cdot14447^2\cdot76103\cdot43231361\cdot P_{23},\\
N_2&=2^{536}3^{70}5^{164}\cdot1493\cdot14447^4\cdot381977\cdot7176343\cdot28907983207\cdot P_{30}\cdot P_{33},
\end{align*}
where $P_{23}=81164371046060827598167$ and $P_{30}$, $P_{33}$ are primes with $30$ and $33$ digits. For the quadratic family, the four polynomials $F,\partial_xF,\partial_x^2F,\partial_x^3F$ have reduced Gr\"obner basis $\{1\}$ over $\Q$ in each chart $a=1$, $b=1$, $c=1$, for the graded reverse lexicographic order.

\subsection*{Point counts}
The counts for $C_0$ used in the paper are
\[
\begin{array}{c|ccccc}
p&\#C_0(\F_p)&\#C_0(\F_{p^2})&P_p(T)&\#J_0(\F_p)\\\hline
13&14&182&T^4+6T^2+169&176\\
17&18&286&T^4-2T^2+289&288\\
31&36&942&T^4+4T^3-2T^2+124T+961&1088
\end{array}
\]
The curve $E$ satisfies $\#E(\F_{13})=19$ and $\#E(\F_{17})=18$. The point counts for Proposition~\ref{prop:small} and Corollary~\ref{cor:E} use the formula $\#J(\F_p)=P_p(1)$ of Section~\ref{sec:sections}; when the leading coefficient of the sextic is a square, the two points at infinity are rational.

\subsection*{Acknowledgment}
This work was supported by the Funda\c{c}\~ao Carlos Chagas Filho de Amparo \`a Pesquisa do Estado do Rio de Janeiro (FAPERJ), grant E-26/210.728/2024, within the project \emph{Rela\c{c}\~ao entre a altura ponderada, altura GIT e altura de N\'eron--Tate} at the Universidade do Estado do Rio de Janeiro.

\end{document}